\documentclass[reqno]{amsart}
\usepackage{geometry}
\usepackage{csquotes}
\usepackage{graphicx} % Required for inserting images
\usepackage{todonotes}
\usepackage{mathtools}
\usepackage{esint}
\usepackage[bookmarks = true,colorlinks = true]{hyperref}

\allowdisplaybreaks

\title[The Aronson-Bénilan estimate for a Lagrangian discretization of the PME]{The Aronson-Bénilan estimate for a Lagrangian particle discretization of the Porous Medium Equation}
\author{Marco Di Francesco}
\address{Marco Di Francesco - Dipartimento di Ingegneria e Scienze dell'Informazione e Matematica (DISIM),
Università degli Studi dell'Aquila,
Via Vetoio 1, Coppito,
67100 L'Aquila, Italy.}
\email{marco.difrancesco@univaq.it}
\author{Daniel Matthes}
\address{Daniel Matthes - Department of Mathematics,
TUM School of Computation, Information and Technology,
Technische Universität München, Boltzmannstr. 3
85748 Garching b. München, Germany.}
\email{matthes@ma.tum.de}

\date{January 2026}

\begin{document}

\newtheorem{proposition}{Proposition}
\newtheorem{lemma}{Lemma}
\newtheorem{theorem}{Theorem}
\newtheorem{corollary}{Corollary}

\newcommand{\R}{{\mathbb R}}
\newcommand{\Rp}{{\mathbb R}_{\geq0}}
\newcommand{\Rnn}{{\mathbb R}_{\ge0}}
\newcommand{\Z}{{\mathbb Z}}
\newcommand{\intR}{\int_{\mathbb R}}
\newcommand{\dd}{\,\mathrm{d}}
\newcommand{\dn}{\mathrm{d}}
\newcommand{\dff}{\operatorname{D}}
\newcommand{\tvn}[1]{\left\|#1\right\|_\text{TV}}
\newcommand{\fnc}{{\mathcal F}}
\newcommand{\rec}{\mathfrak{R}}

\newcommand{\rmap}{\mathcal{R}}
\newcommand{\recon}{\mathfrak{Rec}}
\newcommand{\sam}{\mathbf{S}}

\begin{abstract}
  We consider a nearest neighbor, Lagrangian particle discretization of the one dimensional porous medium equation. We prove that the particle model satisfies a discrete analog of the celebrated Aronson-Bénilan estimate, which we use to prove a growth estimate for the evolution of the support and an $L^\infty$ decay estimate which are both known to hold in the continuum. These estimates are uniform with respect to the number of particles. We also prove convergence of the scheme towards the solution to the porous medium equation in the full generality of $L^1$ initial data.
\end{abstract}

\subjclass{35K55; 35K65; 35Q70; 35A35}

\keywords{Nonlinear diffusion; Lagrangian particle approximation; Convergence to weak solutions; Smoothing effect; Speed of propagation of the support; Large time decay}

\maketitle

%\tableofcontents

\section{Introduction}

The porous medium equation
\begin{equation}\label{eq:main_PME}
    \partial_t \rho = \Delta \rho^m\,,\qquad m>1
\end{equation}
is a classical model describing the filtration of an isentropic gas through a porous medium \cite{barenblatt}, or the evolution of temperature in radiating media \cite{zeldovich}. More recently, it has been used in population dynamics as well, to describe phenomena of nonlinear diffusion or local repulsion between interacting agents e.g. in swarms \cite{mogilner}. In \eqref{eq:main_PME}, $\rho=\rho(t,x)$ represents the (nonnegative) density of a continuum medium, depending on a space variable $x\in \R^d$ and time $t\geq 0$.
Unlike its \enquote{linear closest relative}, the heat equation, which features solutions supported on the whole space $\R^d$ regardless of the initial datum, the porous medium equation features a finite-speed growth of the support, a property that makes it more preferable to its linear counterpart in many applications. The mathematical theory of the porous medium equation \eqref{eq:main_PME} has seen numerous and significant contributions since the 1980s. Far from claiming to be exhaustive, we cite here \cite{AB,AC,barenblatt,CF,dibenedetto,V1,V2,pierre_nonlinear_analysis_1982,CT,otto} and refer to the monographs \cite{Vazquez} and \cite{Vazquez_smoothing_effects} for a more comprehensive analysis.

Mostly motivated by applications in numerical analysis and sampling methods, but also by the need of detecting \eqref{eq:main_PME} as a \enquote{good approximation} of a discrete system of interacting agents in some contexts, increasing attention has recently been devoted to the approximation of nonlinear transport/diffusion models via (a suitable combination/interpolation of) finitely many \emph{moving particles}, obeying a suitable system of ODEs. Such a system is typically obtained as the discretization of the Lagrangian formulation of the underlying model. More specifically for  \eqref{eq:main_PME} regarded as a continuity equation
\[\partial_t\rho +\mathrm{div}(\rho v)=0\,,\qquad v=-\frac{m}{m-1}\nabla \rho^{m-1}\,,\]
the moving particles $[0,+\infty)\ni t\mapsto x(t)\in \R^d$ should somehow satisfy a law that approximates
\begin{equation}\label{eq:particles_law}
    \dot{x}(t)=v(x(t),t)\,.
\end{equation}
The initial data for the particles' positions are computed after a \emph{sampling}, or \emph{atomization} of the initial datum $\overline{\rho}$ of the Cauchy problem for \eqref{eq:main_PME}. Based on the positions of the particles at time $t>0$, an approximate moving density is constructed, with the goal of proving that it is close to the solutions to \eqref{eq:main_PME} with $\overline{\rho}$ as initial condition in a suitable topology. Since the approximation is constructed by the law \eqref{eq:particles_law} following the path of \enquote{material particles} (with a terminology borrowed from classical fluid mechanics), this whole procedure is also referred to as \emph{Lagrangian} particles approximation. As particles obey a deterministic law, the above approximation procedure is often called Deterministic Particle Approximation (DPA).

A common DPA for \eqref{eq:main_PME} is obtained by suitably convolving the above velocity field $v$, for example by considering $v_\varepsilon=\frac{m}{m-1}\nabla K_\varepsilon\ast \rho^{m-1}$ with $K_\varepsilon$ a suitable symmetric, often smooth kernel, and $\varepsilon>0$ a scaling parameter vanishing as the number of particles tends to infinity, with $K_\varepsilon\rightarrow \delta_0$ in some measure sense as $\varepsilon\searrow 0$, see for example \cite{oelschlaeger} for the stochastic case, \cite{carrillo_craig_patacchini,burger_esposito,DF_iorio_schmidtchen} for the fully deterministic one. This method, often referred to as \enquote{blob method}, while it has the clear advantage of easing the definition of the ODE system for the moving particles (in that the velocity field may be calculated pointwise even if $\rho$ is discontinuous, provided $K_\varepsilon$ is smooth enough), essentially turns the diffusion operator in \eqref{eq:main_PME} into a nonlocal transport operator, thus drastically changing the structure of the particles' dynamics in the case of initial data with \enquote{high concentrations}. Such an issue is reflected in the particles ODEs of a blob method, in which every particles actually interacts with any other particle, although with weights that vanish in the limit for particles very far from each other.

A perhaps more natural way of reproducing \eqref{eq:main_PME} at the level of Lagrangian particles and maintaining, at the same time, the features of a local model, is to restrict to \emph{nearest neighbor} interactions, that is, to allow every particle to interact only with \enquote{nearby} particles. Clearly, the feasibility of this method is highly dependent on the spatial dimension. In one space dimension, taking $N$ ordered particles with positions $x^{(N)}_0<x^{(N)}_1<\ldots<x^{(N)}_N$, the nearest neighbor particles of a given $x^{(N)}_i$ are naturally identified with $x^{(N)}_{i-1}$ and $x^{(N)}_{i+1}$.  Following the seminal idea of \cite{russo}, in which a discrete analog of the velocity in \eqref{eq:particles_law} was identified (and referred to as \enquote{osmotic velocity}) for the linear diffusion equation, a nearest neighbor interaction approximation for the one-dimensional \eqref{eq:main_PME} was considered in \cite{Gosse}, which investigated the approximation problem mainly from a numerical point of view. The corresponding particle system reads
\begin{equation}\label{eq:particles_intro}
    \dot{x}^{(N)}_i(t)=-N(R^{(N)}_{i+1}(t)^m-R^{(N)}_{i}(t)^m)\,,\qquad i\in\{0,\ldots,N\}
\end{equation}
where
\[R^{(N)}_i(t)=\frac{1}{N(x^{(N)}_{i}(t)-x^{(N)}_{i-1}(t))}\,,\qquad i\in\{1,\ldots,N\}\,.\]
Here, $x^{(N)}_i(t)$ with $i\in\{0,\ldots,N\}$ are $N+1$ ordered particles, and $R^{(N)}_{0}(t)=R^{(N)}_{N+1}(t)=0$ by convention. We shall provide an informal justification for \eqref{eq:particles_intro} in Subsections \ref{subsec:lagrangian} and \ref{subsec:statement_AB}. In this paper, as well as in other papers on similar subjects (see e.g. \cite{DF_rosini} for scalar conservation laws), the following discrete reconstruction of the density
\begin{equation}\label{eq:discrete_density_intro}
  \rho^{(N)}(t,x)=\sum_{i=1}^{N}R^{(N)}_i(t)\mathbf{1}_{[x^{(N)}_{i-1}(t),x^{(N)}_{i}(t))}(x)
\end{equation}
is used for comparison with the solution $\rho$ to the continuum model \eqref{eq:main_PME}.

Rigorous results on the convergence of $\rho^N$ towards weak solutions to
\begin{align}
    \label{eq:PME}
    \rho_t = (\rho^m)_{xx}
\end{align}
were obtained for the first time in \cite{Osberger} for a more general class of $1d$ models which includes  \eqref{eq:main_PME} as a special case. Further results have been obtained in \cite{fagioli_radici,daneri_radici_runa,daneri_radici_runa_2} for aggregation-diffusion equations. While the above particle scheme \eqref{eq:particles_intro} has the advantage of reproducing, at the discrete particle level, the variational formulation of \eqref{eq:main_PME} in the Wasserstein gradient flow setting of \cite{otto,AGS}, so far the identification of the limit as a weak solution to \eqref{eq:main_PME} has been possible only in frameworks in which, simplistically speaking, the solution to \eqref{eq:main_PME} is far from zero. Such an issue is due to the difficulty encountered so far in controlling the speed of propagation of the support of the approximating density reconstruction $\rho^N$ defined in \eqref{eq:discrete_density_intro} \emph{uniformly w.r.t. the number of particles.}

Partly related to that, the particle scheme \eqref{eq:particles_intro} has never been proven, so far, to satisfy a discrete counterpart of any of the \emph{regularizing effects} that characterize the porous medium equation, the most celebrated one being the Aronson-Bénilan fundamental estimate \cite{AB} (see also \cite[Proposition 9.4]{Vazquez}), which in the one-dimensional case states that, for a given nonnegative initial condition in $L^1(\R)$, at any $t>0$ the profile $\rho(t,\cdot)$ solving \eqref{eq:PME} is positive and smooth inside its support,
and satisfies there
\begin{align}
    \label{eq:AB}
    \partial_{xx}\left(\frac{\rho^{m-1}}{m-1}\right) \ge - \frac1{m(m+1)t}.
\end{align}
The estimate \eqref{eq:AB} is one of the key tools in the analysis of non-negative finite mass solutions $\rho$
to \eqref{eq:PME}. In fact, an existence theory of $L^1$ solutions to \eqref{eq:PME} can be carried out mostly from \eqref{eq:AB}, which provides higher order estimates needed to ensure compactness. The inequality \eqref{eq:AB} is sharp, and is saturated for the self-similar solution $\hat\rho$ to \eqref{eq:PME},
given by
\begin{align}
  \label{eq:PMEss}
  \hat\rho(t,x) = t^{-1/(m+1)}B\big(t^{-1/(m+1)}x\big)
  \quad \text{with} \quad
  B(\xi) = \left(C_m^{m-1} - \frac{m-1}{2m(m+1)}\xi^2\right)_+^{1/(m-1)},
\end{align}
where $C_m>0$ is a mass parameter. On grounds of \eqref{eq:AB} and elementary properties like mass conservation alone,
one can easily deduce a variety of qualitative features of solutions. One, which applies for compactly supported initial data, is the upper bound on the measure of the support at time $t>0$
\begin{equation}\label{eq:PME_support}
    \mathrm{meas}(\mathrm{supp}(\rho(\cdot,t)))\leq a + b t^{\frac{1}{m+1}}
\end{equation}
for some constants $a,b>0$ depending on the initial support and on $m$, see for example \cite{Vazquez_revista_1998}. Another one is the uniform in time decay of $\rho(t,\cdot)$,
\begin{align}
  \label{eq:PMELinfty}
  \max_x \rho(t,x) \le C_m t^{-1/(m+1)} \quad \text{for all $t>0$},
\end{align}
where $C_m$ is the mass parameter from \eqref{eq:PMEss} above, see \cite[Section 9.4]{Vazquez}. Both bounds \eqref{eq:PME_support} and \eqref{eq:PMELinfty} are sharp, as they are saturated by the self-similar solution $\hat\rho$ in \eqref{eq:PMEss}.

\medskip
The main goal of the paper at hand is to replicate the Aronson-Bénilan estimate \eqref{eq:AB}, as well as its consequences \eqref{eq:PME_support} and  \eqref{eq:PMELinfty},
for the discrete density $\rho^N$ defined in \eqref{eq:discrete_density_intro} out of the
particle discretization \eqref{eq:particles_intro} of the PME \eqref{eq:PME}.
Moreover, we use our discrete version of \eqref{eq:AB} to show rigorous convergence of the density \eqref{eq:discrete_density_intro} constructed from the scheme \eqref{eq:particles_intro} towards the weak solution to \eqref{eq:PME} in the full generality of $L^1(\R)$ initial data. The main conclusion of this paper is that a discrete version of \eqref{eq:AB} alone is enough to prove convergence of the scheme \eqref{eq:particles_intro}  towards \eqref{eq:PME} as well as to replicate qualitative properties such as \eqref{eq:PME_support} and  \eqref{eq:PMELinfty}. The concept of solution to \eqref{eq:PME} we consider here is that of distributional solutions in $L^\infty_{\mathrm{loc}}((0,+\infty)\,;\,L^\infty(\R))$ with distributional initial trace in $L^1(\R)$, for which uniqueness holds, see \cite{pierre_nonlinear_analysis_1982}.

The paper is structured as follows.
\begin{itemize}
    \item In Section \ref{sec:preliminaries} we introduce the main concepts, provide a heuristic derivation of the Aronson-Bénilan estimate \eqref{eq:AB}, set up our particle scheme and our  main assumptions, and state our results in Subsection  \ref{subsec:other_statements}. Our main results are collected in Theorems \ref{thm:main_AB}, \ref{thm:support_main}, \ref{thm:Linfty_main}, and \ref{thm:convergence_main} below.
    \item In Section \ref{sec:proof_estimate} we prove some basic properties of the particle scheme and provide the rigorous proof of the discrete Aronson-Bénilan estimate (that is, Theorem \ref{thm:main_AB}).
    \item In Section \ref{sec:support} we prove Theorem \ref{thm:support_main} on the uniform growth estimate of the support.
    \item In Section \ref{sec:decay} we prove the uniform $L^\infty$ decay property of Theorem \ref{thm:Linfty_main}.
    \item Finally, in Section \ref{sec:convergence} we prove our convergence result stated in Theorem \ref{thm:convergence_main}.
\end{itemize}

We conclude this introductory section setting some ground rules on notations. All $N$-depending approximating quantities will carry their  dependence on $N$ in different ways:
\begin{itemize}
    \item $N$-depending \emph{spaces} of vectors/functions will be denoted with $N$ as a subscript, with the only exception of the Euclidean space $\R^N$.
    \item $N$-depending \emph{maps} defined on linear spaces, or functional spaces, are denoted with $N$ as a superscript (e.g. a sampling map $\sam^N$).
    \item Finite dimensional vectors, for example in $\R^{N+1}$, are denoted in general as $X=(x_0,\ldots,x_N)$, with upper case letters to denoted the vector, lower case ones to denote their components. No dependence on the dimension is encoded in the notation. However, when dealing with \emph{solutions to a particle scheme}, their dependence on $N$ is (often) emphasized with the superscript notation $X^{(N)}$.
\end{itemize}

\section{Preliminary concepts and statement of the main results}\label{sec:preliminaries}

\subsection{Probability measures, pseudo-inverses, Lagrangian formulation}\label{subsec:lagrangian}

Let $\mathcal{P}(\R)$ be the set of probability measures on $\R$. For a given $\rho\in \mathcal{P}(\R)$, we define the cumulative distribution function of $\rho$, denoted by $F_\rho:\R\rightarrow [0,1]$, as
\[F_\rho(x)=\rho((-\infty,x])\,,\qquad x\in \R\,,\]
and its right-continuous pseudo inverse $X_\rho:(0,1)\rightarrow \R$ as
\begin{equation}\label{eq:mapT1}
    X_\rho(z)=\inf\left\{x\in \R\,:\,\, F_\rho(x)>z\right\}\,.
\end{equation}
Given
\[\mathcal{K}=\left\{X:(0,1)\rightarrow \R\,:\,\,\hbox{$X$ is right continuous and non-decreasing}\right\}\]
the map $\mathcal{T}:\mathcal{P}(\R)\rightarrow \mathcal{K}$ defined by
\begin{equation}\label{eq:mapT2}
    \mathcal{P}(\R)\ni\rho\mapsto \mathcal{T}[\rho]=X_\rho\in \mathcal{K}
\end{equation}
is a bijection. Assuming the necessary regularity for a solution $\rho$ to  \eqref{eq:PME}, a Lagrangian formulation satisfied by the function
\begin{equation}\label{eq:pseudo}
    X(t,z)=X_{\rho(t,\cdot)}(z)=\mathcal{T}[\rho(t,\cdot)](z)\,,
\end{equation}
can be recovered (see e.g. \cite{carrillo_toscani_wasserstein_PME}), namely
\begin{equation}\label{eq:lagrangian_PME}
    \partial_t X(z,t)=-\partial_z\left(\frac{1}{\partial_z X(z,t)}\right)\,.
\end{equation}
More precisely, \eqref{eq:lagrangian_PME} follows rigorously by assuming $\rho$ solves \eqref{eq:PME}, with $\rho$ smooth on its support and and the set $\left\{x\in \R\,:\,\,\rho(x,t)>0\right\}$ being a connected interval for all $t>0$.

For future use, we recall that the $1$-Wasserstein distance between two probability measures $\rho,\eta\in \mathcal{P}(\R)$ can be computed as
\begin{equation}\label{eq:wasserstein}
    d_1(\rho,\eta)=\|\mathcal{T}[\rho]-\mathcal{T}[\eta]\|_{L^1((0,1))}\,.
\end{equation}

\subsection{Lagrangian version of the Aronson-Bénilan estimate}\label{subsec:heuristics}

The Lagrangian formulation \eqref{eq:lagrangian_PME} of \eqref{eq:PME} appears to be particularly well-adapted to proving, at least formally, the estimate \eqref{eq:AB}. Assuming the necessary regularity on a solution $\rho$ to \eqref{eq:PME} and unit mass for simplicity, let $X$ be defined as in \eqref{eq:pseudo}. Let
\begin{equation}\label{eq:R}   R(t,z)=\rho(t,X(t,z))=\frac{1}{\partial_z X(t,z)}\,,
\end{equation}
which is finite for all $(t,z)\in (0,+\infty)\times (0,1)$ due to the regularity of $\rho$ and assuming that the support of $\rho(\cdot,t)$ is a connected interval. Formally, thanks to \eqref{eq:lagrangian_PME}, $R$ satisfies
% \eqref{eq:PME} in terms of the inverse distribution function $X$ and the associated \enquote{reshuffled} density
% \begin{align}
%     \label{eq:R}
%     R := \rho\circ X = \frac1{X'}.
% \end{align}
% The result is
\begin{align}
    \label{eq:LPME}
    \partial_t R(t,z) = R(t,z)^2\partial^2_{zz}(R(t,z)^m).
\end{align}
Now rewrite the second derivative on the left-hand side of \eqref{eq:AB} with the help of \eqref{eq:R}. Since
\begin{align*}
    \frac{m\rho^{m-1}(t,X(t,z))}{m-1} = \frac{mR^{m-1}(t,z)}{m-1},
\end{align*}
we obtain for the first derivative
\begin{align*}
    \left(\frac{m\rho^{m-1}}{m-1}\right)_x\circ X\,X_z = mR^{m-2}R_z\,,
\end{align*}
which implies
\begin{align*}
    &  \left(\frac{m\rho^{m-1}}{m-1}\right)_x\circ X = (R^m)_z.
\end{align*}
Accordingly, for the second derivative we obtain
\begin{align*}
    \left(\frac{m\rho^{m-1}}{m-1}\right)_{xx}\circ X X_z = (R^m)_{zz}\,,
\end{align*}
which implies
\begin{align*}
    &   \left(\frac{m\rho^{m-1}}{m-1}\right)_{xx}\circ X = R(R^m)_{zz}.
\end{align*}
Thus, the quantity of interest in \eqref{eq:AB} in Lagrangian variables is
\begin{align}
    \label{eq:LZ}
    Z(t,z):=R(t,z)(R^m(t,z))_{zz}.
\end{align}
\begin{proposition}
    Consider a sufficiently smooth solution to the Lagrangian formulation \eqref{eq:LPME} of the porous medium equation. Then the function $Z$ defined in \eqref{eq:LZ} satisfies
    \begin{align}
        \label{eq:LAB}
        Z(t,z) \ge -\frac1{(m+1)t}.
    \end{align}
\end{proposition}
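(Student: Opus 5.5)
The plan is to derive a closed evolution equation for $Z$ from \eqref{eq:LPME}, observe that it has the form of a degenerate parabolic equation with a quadratic reaction term $(m+1)Z^2$, and then run a minimum-principle argument comparing $\min_z Z(t,\cdot)$ with the explicit solution $-1/((m+1)t)$ of the Riccati ODE $y'=(m+1)y^2$.

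\emph{Step 1: the equation for $Z$.} Set $P:=R^m$. From \eqref{eq:LPME} we get $\partial_t P = mR^{m-1}\partial_t R = mR^{m+1}P_{zz} = m\,R^m Z = m\,PZ$. Differentiating $Z=RP_{zz}$ in time gives
\begin{align*}
  \partial_t Z = (\partial_t R)\,P_{zz} + R\,(\partial_t P)_{zz} = R^2P_{zz}^2 + mR\,(PZ)_{zz} = Z^2 + mR\big(P_{zz}Z + 2P_zZ_z + PZ_{zz}\big).
\end{align*}
Since $RP_{zz}Z=Z^2$, this becomes
\begin{align*}
  \partial_t Z = (m+1)Z^2 + 2mRP_z\,Z_z + mR^{m+1}Z_{zz},
\end{align*}
which is a parabolic equation (diffusion coefficient $mR^{m+1}\ge0$) with a drift term and the reaction term $(m+1)Z^2\ge0$. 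This computation is routine once one notices the identity $\partial_t P = mPZ$.

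\emph{Step 2: the comparison argument.} Fix $\varepsilon>0$ and set $y_\varepsilon(t)=-1/((m+1)(t+\varepsilon))$, which solves $y'=(m+1)y^2$ with $y_\varepsilon(0)=-1/((m+1)\varepsilon)$. For $\varepsilon$ small (depending on the smooth initial datum) we have $Z(0,\cdot)>y_\varepsilon(0)$.

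Suppose that $Z$ reaches $y_\varepsilon$ at a first time $t_0$, at a point $z_0$. There $Z(t_0,\cdot)-y_\varepsilon(t_0)$ attains an interior minimum value $0$, so $Z_z=0$ and $Z_{zz}\ge0$. Hence
\[
\partial_t Z \ge (m+1)Z^2 = (m+1)y_\varepsilon^2 = y_\varepsilon'.
\]
To turn this nonstrict inequality into a contradiction, I would use the standard device of replacing $y_\varepsilon$ by a slightly smaller strict subsolution, for instance $y_{\varepsilon,\delta}=y_\varepsilon-\delta(1+t)$. One has $y'_{\varepsilon,\delta}<(m+1)y_{\varepsilon,\delta}^2$ wherever $y_{\varepsilon,\delta}<0$, and one then lets $\delta\to0$. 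Finally, letting $\varepsilon\to0$ yields \eqref{eq:LAB}.

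\emph{Main obstacle: the boundary $z\in\{0,1\}$.} This is the step I expect to be hardest, because there $R\to0$: the equation degenerates and the minimum might escape to the boundary. With the smoothness granted by the hypothesis ("sufficiently smooth"), I would argue as follows. The coefficients $mR^{m+1}$ and $2mRP_z$ vanish at the endpoints, so if the minimum sits at $z\in\{0,1\}$, the equation there reduces to $\partial_t Z=(m+1)Z^2$, and the same ODE comparison applies directly. Alternatively, I would assume, as the formal setting permits, that $Z$ is continuous on $[0,1]$ (so minima are attained) and treat boundary minima via this reduced equation. If that were not available, the fallback is to perform the argument on $[\eta,1-\eta]$ for a positive solution (strictly positive $R$) and conclude by approximation.
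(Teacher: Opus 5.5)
Your proposal is correct and follows essentially the same route as the paper: derive $\partial_t Z=(m+1)Z^2+2mR(R^m)_zZ_z+mR^{m+1}Z_{zz}$, then apply a minimum principle and compare with the Riccati ODE $\dot y=(m+1)y^2$. Your coefficients are the correct ones; the paper's last line has harmless exponent typos. The paper instead works directly with $f(t)=\min_z Z(t,z)$, asserts $\dot f\ge(m+1)f^2$, and explicitly neglects the boundary, whereas your first-touching argument with the strict subsolution $y_\varepsilon-\delta(1+t)$ and your discussion of the degenerate endpoints make that formal step more careful.
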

The meaning of \enquote{sufficiently smooth} above is intentionally vague. For example, the role of boundary conditions is neglected. In any case, the proof below is just for illustration of the principle, which will be made fully rigorous for spatially discrete solutions later.
\begin{proof}
    For the time derivative of $Z$, we obtain by means of \eqref{eq:LPME} the expression
    \begin{align*}
        \partial_t Z
        &= \partial_tR\,(R^m)_{zz} + mR(R^{m-1}\,\partial_tR)_{zz}\\
        &= \big[R(R^m)_{zz}\big]^2 + mR\big(R^{m+1}(R^m)_{zz}\big)_{zz} \\
        &= Z^2 + m R(R^{m}Z)_{zz} \\
        &= (m+1) Z^2 + 2m R(R^{m-1})_{z}Z_{z} + mR^mZ_{zz}.
    \end{align*}
    Now, at some fixed time $t>0$, consider a minimal point $\xi^*$ of $Z$. Then $Z_z(t,\xi^*)=0$ and $Z_{zz}(t,\xi^*)\ge0$. Consequently,
    \begin{align*}
     \partial_t Z(t,\xi^*) \ge (m+1) Z(t,\xi^*)^2.
    \end{align*}
    In other words, the minimum $f(t):=\min_\xi Z(t,\xi)$ of $Z$ satisfies the ODE
    \begin{align}
        \label{eq:ode}
        \dot f \ge (m+1) f^2,
    \end{align}
    which implies, with a simple ODE argument after separating the variables $f$ and $t$,
    \begin{align}
        \label{eq:est}
        f(t) \ge -\frac1{(m+1)t},
    \end{align}
    which yields \eqref{eq:LAB}.
\end{proof}

%%%%%%%%%%%%%%%%%%%%%%%%%%%%%%%%%%%%%%%%%%%%%%%%%%%%%%%
\subsection{Main assumptions and definition of the particle scheme}\label{subsec:scheme}
%%%%%%%%%%%%%%%%%%%%%%%%%%%%%%%%%%%%%%%%%%%%%%%%%%%%%%%
%
Throughout the whole paper we shall assume the initial condition $\overline{\rho}$ satisfies
\begin{align}
  \label{eq:In}
  \overline{\rho}\in \mathcal{P}(\R)\cap L^1(\R).
\end{align}
Fix a number $N\in \mathbb{N}$ of intervals, each carrying a mass of $1/N$.
Elements $X$ in the convex cone
\[
  \mathcal{K}_N=\left\{X=(x_0,\ldots,x_N)\in \mathbb{R}^{N+1}\,:\,\, x_i\leq x_{i+1}\ \text{for all $i=0,\ldots,N-1$}\right\}\, ,
\]
determine the $N+1$ end points of the intervals.
The total length $x_N-x_0$ of the collection of intervals is denoted by % $\mathcal{L}^N:\mathcal{K}_N\rightarrow [0,+\infty)$
\begin{equation}\label{eq:functional_L}
    \mathcal{L}^N[X]=x_N-x_0\,. %\qquad X=(x_0,\ldots,x_N)\in \mathcal{K}_N\,.
\end{equation}
We think as the mass portions on $1/N$ as uniformly distributed on each interval, i.e.,
the interval $(x_{i-1},x_i)$ carries a constant spatial density of $1/[N(x_i-x_{i-1})]$.
These $N$ density values are summarized by
the \emph{reconstruction map} $\rmap^N:\mathcal{K}_N\rightarrow\Rp^{N}$,
\begin{align*}
  & \rmap^N[X]=\big(\rmap^N_1[X],\ldots,\rmap^N_N[X]\big)
  \\
  & \ \text{with}\quad
    \rmap^N_i[X]=\frac{1}{N(x_{i}-x_{i-1})}\,,\quad i\in\{1,\ldots,N\}\,.
    % \quad \text{for $X=(x_0,\ldots,x_N)\in \mathcal{K}_N$},
\end{align*}
where $\Rp^N$ denotes the cone of $N$-vectors with non-negative entries.
Alternatively, the mass distribution can be reconstructed
as a piecewise constant interpolation $\rec^N:\mathcal{K}_N\rightarrow \mathcal{P}(\R)$:
\begin{equation}\label{eq:reconstructions}
  \rec^N[X]=\sum_{i=1}^{N}\rmap^N_i[X]\mathbf{1}_{[x_{i-1},x_{i})}\,.
  % \qquad \hbox{for $X=(x_0,\ldots,X_N)\in \mathcal{K}_N$}\,.
\end{equation}
% Here, $\mathcal{P}(\mathbb{R})$ is the space of probability measures.
For later reference, we further define $\mathcal{D}^N:\mathcal{K}_N\rightarrow\Rp^N$ by
\begin{align*}
  \mathcal{D}^N[X]=\big(\mathcal{D}^N_1[X],\ldots,\mathcal{D}^N_{N}[X]\big)
  \quad\text{with}\quad
  \mathcal{D}^N_i[X]=(x_{i}-x_{i-1}),\,\quad X=(x_0,\ldots,x_N)\in \mathcal{K}_N\,,
\end{align*}
which implies  $\rmap^N_i[X]=1/(N\,\mathcal{D}^N_i[X])$.

By a \emph{sampling map}, we mean an association of any absolutely continuous probability measure on $\R$
to an ordered $(N+1)$-vector of nonoverlapping particle positions,
\begin{equation}\label{eq:initial_sampling_2}
  \sam^N:\mathcal{P}(\mathbb{R})\cap L^1(\mathbb{R})\rightarrow \mathcal{K}_N^\circ\,,\quad \sam^N[\rho]=\left(\sam^N_0[\rho],\ldots,\sam^N_N[\rho]\right)
\end{equation}
with the approximation property
\begin{equation}\label{eq:initial_sampling}
  \frac{1}{N+1}\sum_{i=0}^{N}\varphi\left(\sam^N_i[\rho]\right)\rightarrow \int_\R \varphi(x)\rho(x) dx\qquad \hbox{as $N\rightarrow+\infty$, for all $\varphi\in C_0(\R)$}\,.
\end{equation}
% We also require as a standing assumption
% \begin{equation}\label{eq:initial_sampling_2}
%   \sam^N\left(\mathcal{P}(\R)\cap L^1(\R)\right)\subset \mathcal{K}_N^\circ\,.
% \end{equation}
% The above map will be used to sample a given initial condition in $\mathcal{P}(\R)\cap L^1(\R)$ with $N+1$ ordered points of the real line.
% Assumption \eqref{eq:initial_sampling_2} prescribes that initial sampled particles cannot overlap.
In some cases we shall also require the additional assumption
\begin{equation}\label{eq:sampling_support_N}
   \lim_{N\rightarrow +\infty} \frac{\mathcal{L}^N[\sam^N[\rho]]^2}{N} = 0\,,\qquad \hbox{for all $\rho\in \mathcal{P}(\mathbb{R})\cap L^1(\mathbb{R})$}\,.
\end{equation}
There are a variety of possible sampling maps with the above properties.
A frequently used one, which we denote by $\sam^N_*$,
applies to compactly supported measures $\rho\in \mathcal{P}(\R)\cap L^1(\R)$ and is defined as follows.
First set
\begin{equation}\label{eq:sampling_support_0}
    \sam^N_{*,0}[\rho]=\inf{\mathrm{supp}(\rho)},
\end{equation}
and then recursively for $i=1,\ldots,N$:
\begin{equation}\label{eq:sampling_support_i}
   \sam^N_{*,i}[\rho]=\inf\left\{x\geq \sam^N_{*,i-1}[\rho]\,:\,\, \int_{\sam^N_{*,i-1}[\rho]}^x d\rho\geq \frac{1}{N}\right\}\,.
\end{equation}
We observe that $\sam^N_*[\rho]\in \mathcal{K}^\circ$ and that $\sam^N_{*,N}[\rho]=\sup \mathrm{supp}(\rho)$.
We shall refer to it as \emph{support preserving sampling}.

We are now ready to define our approximating scheme.
Let $\overline{\rho}$ satisfy \eqref{eq:In} and assume $\sam^N$ is a given sampling map.
% \eqref{eq:initial_sampling} and \eqref{eq:initial_sampling_2}.
Set
\begin{equation}\label{eq:initial_sampling_X}
  \overline{X}^{(N)}=(\overline{x}^{(N)}_0,\ldots,\overline{x}^{(N)}_N)=\sam^N[\overline{\rho}]\in \mathcal{K}_N
\end{equation}
and define the curve
\[[0,+\infty)\ni t\mapsto X^{(N)}(t)=(x^{(N)}_0(t),\ldots,x^{(N)}_N(t))\in \mathcal{K}_N\]
as the unique, local-in-time solution to the Cauchy problem
\begin{equation}\label{eq:particles1}
    \begin{dcases}
      \dot{x}^{(N)}_i(t)=-N(R^{(N)}_{i+1}(t)^m-R^{(N)}_{i}(t)^m) & i=1,\ldots,N-1\\
      \dot{x}^{(N)}_0(t)=-N R^{(N)}_1(t)^m & \\
      \dot{x}^{(N)}_N(t)=N R^{(N)}_{N}(t)^m & \\
      x^{(N)}(0)=\overline{X}^{(N)}\,, &
    \end{dcases}
\end{equation}
where we denoted $R^{(N)}(t)=(R^{(N)}_1(t),\ldots,R^{(N)}_N(t))$ the vector
of mass densities on the time-dependent intervals,
\begin{equation}
  R^{(N)}_i(t)=\rmap_i[X^{(N)}(t)]=\frac{1}{N(x^{(N)}_i(t)-x^{(N)}_{i-1}(t))}\,,\qquad i=1,\ldots,N\,.\label{eq:RN}
\end{equation}
With the convention that
\begin{align}
  \label{eq:Rconvent}
  R^{(N)}_0(t) = R^{(N)}_{N+1}(t)=0,
\end{align}
the first evolution equation in \eqref{eq:particles1} actually holds for all $N+1$ points, i.e., from $i=0$ to $i=N$.
% We observe that \eqref{eq:particles1} may be re-written as
% \begin{equation}\label{eq:particles1_differences}
%  \dot{x}_i(t)=-D^-_i \rmap[X(t)]^m\,,\qquad i\in \{0,\ldots,N\}
% \end{equation}
% with the convention
% \[\rmap_{-1}[X(t)]=\rmap_N[X(t)]=0\,.\]
We will also use the vector $d^{(N)}(t)=(d^{(N)}_1(t),\ldots,d^{(N)}_N(t))$ of interval lengths, given by
\begin{equation}\label{eq:def_di}
    d^{(N)}_i(t)=\mathcal{D}^N_i[X^{(N)}(t)]=(x^{(N)}_i(t)-x^{(N)}_{i-1}(t))\,,\qquad i\in\{1,\ldots,N\}\,,
\end{equation}
We finally introduce the quantity that is supposed to approximate the solution $\rho$ to \eqref{eq:PME} for large $N$, namely
\begin{equation}\label{eq:approximate_density}
    \rho^{(N)}(t,x)=\rec^N[X(t)]=\sum_{i=1}^N R^{(N)}_i(t)\mathbf{1}_{[x^{(N)}_{i-1}(t),x^{(N)}_i(t))}(x)\,.
\end{equation}

%%%%%%%%%%%%%%%%%%%%%%%%%%%%%%%%%%%%%%%%%%%%%%%%%%%%%%%
\subsection{Finite difference operators}\label{subsec:finite}
%%%%%%%%%%%%%%%%%%%%%%%%%%%%%%%%%%%%%%%%%%%%%%%%%%%%%%%
%
For vectors $F=(F_1,\ldots,F_{N})\in\R^N$, we define
the forward/backward difference quotients $\dff^+F$/$\dff^-F$ and the discrete Laplacian $\Delta F$ in the usual way,
\begin{align*}
  \dff^+_kF = N(F_{k+1}-F_k),
  \quad
  \dff^-_kF = N(F_{k}-F_{k-1}),
  \quad
  \Delta_kF = N^2(F_{k+1}+F_{k-1}-2F_k) = \dff^-_k\dff^+F,
\end{align*}
for all $k\in\Z$,
where $F_j$ for indices $j$ outside of $\{1,\ldots,N\}$ is interpreted as zero
(so, for example, $\dff^-_1 F = NF_1$, $\Delta_N F=N^2(F_{N-1}-2F_N)$, $\Delta_{0} F =N^2F_1$).
This definition automatically implies the following summation by parts rule for arbitrary $F,G\in\R^N$:
\begin{align}
  \label{eq:byparts}
  -\sum_{k=1}^{N}G_k\Delta_kF = \sum_{k=0}^N\dff^+_kF\dff^+_kG.
\end{align}
We also introduce the notation
\[F^\alpha =(F_0^\alpha,\ldots,F_{N-1}^\alpha)\]
for $\alpha>0$ and for vectors $F=(F_1,\ldots,F_{N})\in \Rp^N$.

We observe that, with the above notation and the convention \eqref{eq:Rconvent},
the evolution equations in \eqref{eq:particles1} may be re-written in the concise form
\begin{equation}\label{eq:particles1_differences}
 \dot{x}^{(N)}_i(t)=-\dff^+_i \rmap^N[X^{(N)}(t)]^m=-\dff^+_i R^{(N)}(t)^m\,,\qquad i\in \{0,\ldots,N\}
\end{equation}

The following technical lemma will be used later on in the paper.
\begin{lemma}\label{lem:technical1}
    Let $F=(F_1,\ldots,F_{N})\in \Rp^N$. Then,
    \[\sum_{k=0}^{N+1} \Delta_k[F]=0\]
    Moreover, either $F_k=0$ for all $k=1,\ldots,N$ or there exists $k\in \{1,\ldots,N\}$ such that $\Delta_k F<0$.
\end{lemma}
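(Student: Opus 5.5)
The first claim is a telescoping identity and I will verify it directly from the definition. With the convention that $F_j=0$ for $j\notin\{1,\ldots,N\}$, write $\Delta_k F=\dff^+_kF-\dff^+_{k-1}F$, where $\dff^+_kF=N(F_{k+1}-F_k)$. Then
\[
\sum_{k=0}^{N+1}\Delta_kF=N\big(\dff^+_{N+1}F-\dff^+_{-1}F\big)=N^2\big[(F_{N+2}-F_{N+1})-(F_0-F_{-1})\big]=0,
\]
since all four entries have indices outside $\{1,\ldots,N\}$. As a cross-check, I will also expand the sum termwise: each $F_j$ with $1\le j\le N$ receives coefficient $+1$ from $k=j-1$, $+1$ from $k=j+1$ and $-2$ from $k=j$. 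All three indices $j-1,j,j+1$ lie in $\{0,\ldots,N+1\}$, so every coefficient cancels.

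For the second claim, suppose $F$ is not identically zero. I will pick $k^*\in\{1,\ldots,N\}$ at which $F$ attains its maximum $M=\max_j F_j>0$, and among all such maximizers choose the smallest one. Then $F_{k^*-1}<M$: either $k^*-1\ge1$ and $k^*$ is the first maximizer, or $k^*=1$ and $F_0=0<M$. In either case $F_{k^*+1}\le M$, because either $k^*+1\le N$ or $F_{N+1}=0$. Hence
\[
\Delta_{k^*}F=N^2\big[(F_{k^*+1}-F_{k^*})+(F_{k^*-1}-F_{k^*})\big]<0,
\]
since the first bracket is $\le0$ and the second is $<0$.

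An alternative route to the second claim is to combine the first identity with nonnegativity: $\Delta_0F=N^2F_1\ge0$ and $\Delta_{N+1}F=N^2F_N\ge0$, and if all interior $\Delta_k$ were $\ge0$ they would sum to zero, forcing discrete convexity with zero boundary values, hence $F\le0$, hence $F=0$. The maximum argument is more direct, though, so I will use that one.

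There is no real obstacle here. The only care needed is in the index conventions at the boundaries, namely that $F_0=F_{N+1}=0$, and in choosing the maximizer so that the inequality is strict. Nonnegativity of $F$ is used only to guarantee $M>0$, which makes the comparison with the zero boundary values strict.
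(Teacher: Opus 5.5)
Your proof is correct. The first part is the same telescoping computation as in the paper, but for the second claim you take a different route.

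The paper deduces the dichotomy from the identity. Since $\sum_{k=0}^{N+1}\Delta_kF=0$, either all $\Delta_kF$ vanish or some $\Delta_kF<0$. In the first case $\Delta_0F=N^2F_1=0$, and the forward recursion $\Delta_1F=0$, $\Delta_2F=0,\ldots$ forces $F\equiv0$. In the second case the negative index cannot be $0$ or $N+1$, because $\Delta_0F=N^2F_1\ge0$ and $\Delta_{N+1}F=N^2F_N\ge0$.

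Your argument is instead a direct discrete maximum principle that never uses the identity: at the smallest maximizer $k^*$ of $F$ one has $\Delta_{k^*}F<0$. This buys two things:
\begin{itemize}
\item It locates the negative entry explicitly. For $F=R^m$, as in \eqref{eq:dAB_initial}, it is the first interval of maximal density. This is the same index exploited in the proof of Proposition \ref{prop:MP}, where only $\Delta_{i_0}R^m\le0$ was needed.
\item It requires only that $F$ have a positive entry.
\end{itemize}
The paper's argument does not identify the index, but it links the two halves of the lemma, and nonnegativity enters only through the sign of the two boundary terms. Your ``alternative route'' is essentially the paper's, closed by convexity instead of the recursion.

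One small slip: $\Delta_kF=N\big(\dff^+_kF-\dff^+_{k-1}F\big)$, not $\dff^+_kF-\dff^+_{k-1}F$. Your displayed telescoping sum carries the factor $N$ correctly, so nothing is affected.
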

The proof is provided in the Appendix \ref{sec:appendix_lemma}.

%%%%%%%%%%%%%%%%%%%%%%%%%%%%%%%%%%%%%%%%%%%%%%%%%%%%%%%
\subsection{Heuristics}\label{subsec:statement_AB}
%%%%%%%%%%%%%%%%%%%%%%%%%%%%%%%%%%%%%%%%%%%%%%%%%%%%%%%
%
In this subsection we provide a heuristic justification of both the particle scheme \eqref{eq:particles1} and of our discrete version of the Aronson-Bénilan inequality, and then state the latter as a theorem. For simplicity, assume in this subsection that the initial condition $\overline{\rho}$ is sampled via the support preserving map $\sam^N_*$ defined in \eqref{eq:sampling_support_0}-\eqref{eq:sampling_support_i}. Let $\overline{X}=\mathcal{T}[\overline{\rho}]$ with $\mathcal{T}$ the map defined in \eqref{eq:mapT2}. Assume that the system \eqref{eq:particles1} has a unique, global-in-time solution $X^{(N)}(t)=(x^{(N)}_0(t),\ldots,x^{(N)}_N(t))\in \mathcal{K}_N$, and consider its density reconstruction $\rho^{(N)}(t,x)=\rec^N[X^{(N)}(t)](x)$ in \eqref{eq:approximate_density}. One can easily see that the function $X^{(N)}:[0,+\infty)\times[0,1)\rightarrow \R$ defined by
\[X^{(N)}(t,z)=\sum_{i=0}^{N-1}\left[x^{(N)}_i(t)+R^{(N)}_{i+1}(t)^{-1}\left(z-\frac{i}{N}\right)\right]\mathbf{1}_{[i/N,(i+1)/N)}(z)\,,\]
restricted to $z\in(0,1)$, satisfies
\[X^{(N)}(t,z)=\mathcal{T}[\rho^{(N)}(t,\cdot)](z)\,,\]
where $\mathcal{T}$ is once again the map \eqref{eq:mapT2}.
Moreover, the function
\[R^{(N)}(t,z)=\sum_{i=0}^{N-1}R^{(N)}_{i+1}(t)\mathbf{1}_{[i/N,(i+1)/N)}(z)\]
satisfies
\[R^{(N)}(t,z)=\rho^{(N)}(t,X^{(N)}(t,z))\,.\]
We may interpret $(0,1)\ni z\mapsto X^{(N)}(t,z)$ as a linear interpolation of the approximation on the fixed grid $\{i/N\,:\,\,i=0\ldots,N\}$ of the function $(0,1)\ni z \mapsto X(t,z)=\mathcal{T}[\rho(t,\cdot)](z)$ with $\rho$ solving \eqref{eq:PME}. Since we expect $X$ to solve \eqref{eq:lagrangian_PME}, it is quite natural to replace the partial derivative $\partial_z$ by a finite difference, for example $\dff^+_i$, to recover exactly our particle scheme \eqref{eq:particles1_differences}, which is equivalent to \eqref{eq:particles1}. Moreover,
$R$ defined in \eqref{eq:R} is expected to solve \eqref{eq:LPME}. Hence, we naturally expect $R^{(N)}$ to solve a finite-difference version of \eqref{eq:LPME}. Given that $R^{(N)}(t,\cdot)$ is piecewise constant with values $R^{(N)}_1(t),\ldots,R^{(N)}_{N}(t)$, a natural discrete counterpart of the quantity $Z$ defined in \eqref{eq:LZ} is
\begin{equation}\label{eq:dZ}
    Z^{(N)}(t,z)=\sum_{k=1}^{N}Z^{(N)}_k(t)\mathbf{1}_{[i/N,(i+1)/N)}(z)\,,\qquad Z^{(N)}_k(t)=R^{(N)}_k(t)\Delta_k R^{(N)}(t)^m\,.
\end{equation}
Based on the above considerations, a totally reasonable discrete Lagrangian version for the Aronson-Bénilan estimate \eqref{eq:AB} is
\[ Z^{(N)}_k(t)\geq -\frac{1}{(m+1)t}\,,\qquad k\in \{1,\ldots,N\}\,,\quad t>0\,,\]
which we will state in a more precise way in the next subsection.

%%%%%%%%%%%%%%%%%%%%%%%%%%%%%%%%%%%%%%%%%%%%%%%%%%%%%%%
\subsection{Statement or our main results}\label{subsec:other_statements}
%%%%%%%%%%%%%%%%%%%%%%%%%%%%%%%%%%%%%%%%%%%%%%%%%%%%%%%
%
First, we will prove in Section \ref{sec:proof_estimate} that the particle scheme \eqref{eq:particles1} and the reconstruction $\rho^{(N)}$ defined in \eqref{eq:approximate_density} are uniquely and globally defined. We then state the main result of this paper, that is, our discrete version of the Aronson-Bénilan estimate.
\begin{theorem}[Discrete Aronson Bénilan estimate]\label{thm:main_AB}
  The quantity $Z^{(N)}$ defined in \eqref{eq:dZ} satisfies
  \begin{equation}\label{eq:dAB_initial}
    \overline{Z}^{(N)}=\min_{k\in \{1,\ldots,N\}}Z^{(N)}_k(0)<0\,.
  \end{equation}
  Moreover, for all $t\geq 0$,
  % \begin{equation}\label{eq:dAB}
  %   Z_k(t)\geq -\frac{1}{(m+1)t}\,,\qquad \hbox{for all $k\in\{1,\ldots,N-1\}$}\,.
  % \end{equation}
  % Moreover, if
  % \[\overline{Z}=\min_{i=0,\dots,N-1}Z_i(0)<0\]
  % then
  \begin{equation}\label{eq:dAB2}
    Z^{(N)}_k(t)\geq -\frac{1}{|\overline{Z}^{(N)}|^{-1}+(m+1)t}\,,\qquad \hbox{for all $k\in\{1,\ldots,N\}$}\,.
  \end{equation}
\end{theorem}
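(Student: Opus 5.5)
The plan is to mimic the formal proof of \eqref{eq:LAB} at the discrete level. Throughout, I use that the solution $X^{(N)}$ of \eqref{eq:particles1} exists globally and stays in $\mathcal{K}_N^\circ$, as announced at the beginning of this subsection. Hence every $R^{(N)}_k(t)>0$, and all quantities are $C^1$ in time.

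\emph{Step 1: the initial claim \eqref{eq:dAB_initial}.} Apply Lemma \ref{lem:technical1} to $F=R^{(N)}(0)^m\in\Rp^N$. This vector is not identically zero, so there is $k$ with $\Delta_kR^{(N)}(0)^m<0$. Since $R^{(N)}_k(0)>0$, this gives $Z^{(N)}_k(0)<0$. The same argument at every $t\ge0$ shows $f(t):=\min_kZ^{(N)}_k(t)<0$ for all $t$, which I will need in Step 4.

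\emph{Step 2: evolution equations.} Write $d_k=x_k-x_{k-1}$. From \eqref{eq:particles1_differences} and the identity $\dff^+_kF-\dff^+_{k-1}F=\frac1N\Delta_kF$ (valid for all $k=1,\dots,N$ with the zero convention \eqref{eq:Rconvent}), I get $\dot d_k=-\frac1N\Delta_kR^m$. Since $R_k=1/(Nd_k)$, this yields the discrete analogue of \eqref{eq:LPME}:
\[
\dot R_k=R_k^2\Delta_kR^m=R_kZ_k .
\]
Differentiating $Z_k=R_k\Delta_kR^m$ and using $\partial_t(R_j^m)=mR_j^{m-1}\dot R_j=mR_j^mZ_j$ gives
\[
\dot Z_k=Z_k^2+mR_k\,\Delta_k\big(R^mZ\big),
\]
exactly as in the continuous computation.

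\emph{Step 3: the minimum principle.} Fix $t$ and let $k$ be an index with $Z_k=f(t)<0$. I expand
\[
\Delta_k(R^mZ)=N^2\big(R_{k+1}^mZ_{k+1}+R_{k-1}^mZ_{k-1}-2R_k^mZ_k\big).
\]
Minimality gives $Z_{k\pm1}\ge Z_k$ whenever $k\pm1\in\{1,\dots,N\}$. Together with $R^m\ge0$, this gives $R_{k\pm1}^mZ_{k\pm1}\ge R_{k\pm1}^mZ_k$. At the boundary the neighbouring $R$ is zero, so this inequality is trivially true there as well. Hence
\[
\Delta_k(R^mZ)\ge Z_k\,\Delta_kR^m=Z_k^2/R_k,
\qquad\text{so}\qquad
\dot Z_k\ge(m+1)Z_k^2 .
\]

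\emph{Step 4: the ODE comparison.} This is the main technical point, because $f=\min_k Z_k$ is only Lipschitz, not $C^1$. I would handle it in the standard way. For a minimum of finitely many $C^1$ functions, the lower right Dini derivative satisfies
\[
D^+f(t)\ge\min\{\dot Z_k(t):Z_k(t)=f(t)\}.
\]
Alternatively, one can use that $f$ is differentiable a.e. and that at a point of differentiability $\dot f=\dot Z_k$ for any active index $k$. Either way, $D^+f\ge(m+1)f^2$. Since $f<0$ by Step 1, the function $g=-1/f$ is well defined and Lipschitz, with $D^+g=D^+f/f^2\ge m+1$. Integrating gives $g(t)\ge|\overline Z^{(N)}|^{-1}+(m+1)t$. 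Inverting this, using $g>0$, yields \eqref{eq:dAB2}.
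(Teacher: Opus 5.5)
Your proof is correct and follows the paper's argument: you use Lemma~\ref{lem:technical1} for the sign of $\overline{Z}^{(N)}$, the identity $\dot Z_k=Z_k^2+mR_k\Delta_k(R^mZ)$, the inequality $\dot Z_{k}\ge(m+1)Z_{k}^2$ at a minimizing index, and an ODE comparison. Your treatment of $f=\min_k Z_k$ via the right Dini derivative, together with the substitution $g=-1/f$ (justified because $f(t)<0$ at every time), makes rigorous the comparison step that the paper only sketches.
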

The proof of Theorem \ref{thm:main_AB} is contained in Section \ref{sec:proof_estimate}.

Besides its use to prove convergence of the scheme to a solution to the PME \eqref{eq:PME}, Theorem \ref{thm:main_AB} may be applied to reproduce some important qualitative properties of solutions to \eqref{eq:PME} at the level of the particle scheme. The first one is an estimate of the speed of propagation of the support of $\rho^{(N)}$. It is the discrete counterpart of the estimate \eqref{eq:PME_support}. $\mathcal{L}^N$ below is the functional defined in \eqref{eq:functional_L}.
\begin{theorem}[Speed of propagation of the support]\label{thm:support_main}
  There is a constant $B$, only depending on $m>1$, such that any solution $X^{(N)}$ to the evolution equation \eqref{eq:particles1}
  satisfies
  \begin{equation}\label{eq:Lest}
    \mathcal{L}^N[X^{(N)}(t)]\leq \mathcal{L}^N[X^{(N)}(0)] + Bt^{\frac{1}{m+1}}\qquad \hbox{for all $t\geq 0$}\,.
  \end{equation}
\end{theorem}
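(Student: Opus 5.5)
The plan is to bound the speed of the two extreme particles, which are the only contributions to the growth of $\mathcal{L}^N$, by $C t^{-m/(m+1)}$ uniformly in $N$, and then integrate in time. From \eqref{eq:particles1},
\[
\frac{d}{dt}\mathcal{L}^N[X^{(N)}(t)] = N R^{(N)}_N(t)^m + N R^{(N)}_1(t)^m .
\]
So it suffices to prove $a:=N P_1\le C\lambda^{m/(m+1)}$ and the symmetric bound for $N P_N$. Here $P_k:=R_k^m$ with the convention \eqref{eq:Rconvent}, and $\lambda:=1/((m+1)t)$. Since $\frac{m}{m+1}<1$, integrating $\dot{\mathcal{L}}^N\le 2C(m+1)^{-m/(m+1)}t^{-m/(m+1)}$ from $0$ to $t$ then gives \eqref{eq:Lest} with $B$ depending only on $m$.

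\textbf{Step 1: discrete pressure-gradient bound.} Drop the $|\overline{Z}^{(N)}|^{-1}$ term in \eqref{eq:dAB2}, which only weakens it. Theorem \ref{thm:main_AB} then gives $R_j\Delta_jP\ge-\lambda$, that is $\Delta_jP\ge -\lambda/R_j=-\lambda N d_j$. Since $\Delta_j P=\dff^-_j\dff^+P=N(\dff^+_jP-\dff^+_{j-1}P)$ and $\dff^+_0P=NP_1=a$, summing over $j=1,\dots,k$ yields
\[
\dff^+_kP \;\ge\; a-\lambda\sum_{j=1}^k d_j \;=\; a-\lambda\,(x_k-x_0),\qquad k=0,\dots,N .
\]
This is the discrete analogue of the semiconcavity of the pressure seen from the left edge.

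\textbf{Step 2: excluding the case where the edge region contains all the mass.} Let $k^*$ be the largest index with $x_{k^*}-x_0\le a/(2\lambda)$. If $k^*=N$, then $\dff^+_NP=-NP_N\le 0$, while Step 1 gives $\dff^+_NP\ge a-\lambda\mathcal{L}^N\ge a/2>0$. This is a contradiction unless $a=0$. Hence $k^*<N$ and $x_{k^*+1}-x_0>a/(2\lambda)$.

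\textbf{Step 3: density lower bound near the edge and the length estimate.} For $k\le k^*$, Step 1 gives $\dff^+_jP\ge a/2$ for all $j<k$. Therefore $P_k\ge P_1+\frac{1}{N}\sum_{j=1}^{k-1}\dff^+_jP\ge \frac{a k}{2N}$, using $P_1=a/N$. Hence $d_k=\frac{1}{N}P_k^{-1/m}\le \frac1N\big(\frac{ak}{2N}\big)^{-1/m}$. Summing and comparing with $\int_0^{s}\sigma^{-1/m}\dd\sigma$ gives
\[
x_{k^*+1}-x_0\le C_m\,a^{-1/m}\Big(\tfrac{k^*+1}{N}\Big)^{(m-1)/m}+d_{k^*+1}\le C_m a^{-1/m}+d_{k^*+1},
\]
because the total mass is $1$. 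Combined with Step 2 this yields $a/(2\lambda)\le C_m a^{-1/m}$, i.e.\ $a\le C\lambda^{m/(m+1)}$, once the stray last interval is controlled. The right endpoint is handled by the mirror argument, summing $\Delta_jP$ from $j=N$ downward.

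The main obstacle is discrete bookkeeping at the threshold index. The interval $d_{k^*+1}$ is not covered by the lower bound $P_{k^*+1}\ge a(k^*+1)/(2N)$, since it straddles the point $a/(2\lambda)$. I would first try to extend Step 3 one index further: Step 1 still gives $\dff^+_{k^*}P\ge a/2$, because $x_{k^*}-x_0\le a/(2\lambda)$. Hence $P_{k^*+1}\ge a(k^*+1)/(2N)$ as well, so $d_{k^*+1}$ obeys the same bound and the summation above includes it. The Riemann-sum comparison $\frac1N\sum_{k\le K}(k/N)^{-1/m}\le \frac{m}{m-1}(K/N)^{(m-1)/m}$ is valid because the function is decreasing. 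This should make all constants independent of $N$.
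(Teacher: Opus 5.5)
Your proof is correct, and it takes a genuinely different route from the paper. I checked Step~1, the contradiction in Step~2 via $\dff^+_NP=-NP_N<0$, your fix of the threshold issue and the Riemann-sum comparison; all hold. The threshold fix works because $\dff^+_jP\ge a/2$ holds for every $j\le k^*$, so $P_k\ge ak/(2N)$ extends to $k=k^*+1$. Together these give $NR_1^m,\,NR_N^m\le 2\big(\tfrac{m}{m-1}\big)^{m/(m+1)}\big((m+1)t\big)^{-m/(m+1)}$.

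The paper never bounds the edge velocity directly. It first integrates $\dot R_i=R_iZ_i$ using the full bound \eqref{eq:dAB2}, which gives the multiplicative estimate \eqref{eq:estimate_support_1}. That estimate is not uniform in $N$, because $\overline{Z}^{(N)}$ may degenerate. The paper repairs this by comparison with an auxiliary configuration $\bar Y^{(N)}$ built from $f(z)=(z(1-z))^{-1/m}$. Its discrete pressure Laplacian is constant in the interior, so its initial $\overline{Z}$ is at least $-\Lambda$ with $\Lambda\sim\beta^{-(m+1)}$, while its support length is at most $\beta\ell$. Since $\beta$ cancels in $\beta\ell\,\Lambda^{1/(m+1)}$, the coefficient of $t^{1/(m+1)}$ is $\beta$-independent. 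The $d_\infty$-contraction of Proposition~\ref{prop:contraction} then transfers the bound to $X^{(N)}$ up to an error $3\beta\ell$, and $\beta\to0$.

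You instead use only the data-independent part $Z_k\ge-1/((m+1)t)$ of Theorem~\ref{thm:main_AB} at the current time. You turn it into a discrete semiconcavity bound for the pressure seen from each edge, and close the argument with the unit-mass constraint. This is the discrete analogue of the classical free-boundary speed argument.

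What your route buys: it is self-contained and needs neither the contraction principle nor an explicit barrier. It also gives a stronger, pointwise-in-time statement, $\frac{d}{dt}\mathcal{L}^N[X^{(N)}(t)]\le C_m t^{-m/(m+1)}$, with each edge moving by at most $\frac B2 t^{1/(m+1)}$ separately. What the paper's route buys: the intermediate bound \eqref{eq:estimate_support_1}, which gives linear growth for short times when $\overline{Z}^{(N)}$ is bounded, and it exhibits the contraction structure of the scheme as a comparison tool of independent interest.
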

Theorem \ref{thm:support_main} will be proven in Section \ref{sec:support}. The second qualitative property as a consequence of Theorem \ref{thm:main_AB} is an $L^\infty$-estimate on discrete solutions that parallels \eqref{eq:PMELinfty}.
With the approach that we take, we are not able to replicate the optimal constant $C_m$,
but we derive that analogous estimate with another universal (and in particular discretization-independent) constant.
\begin{theorem}[$L^\infty$ decay]
  \label{thm:Linfty_main}
  The quantities $R^{(N)}_k$ defined in \eqref{eq:RN} satisfy at every $t>0$
  \begin{align}
    \label{eq:Linfty}
    \max_{k\in \{1,\ldots,N\}} R^{(N)}_k(t) \le \left(\frac{m+1}{16m\,t}\right)^{1/(m+1)}.
  \end{align}
\end{theorem}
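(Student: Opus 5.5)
The plan is to derive \eqref{eq:Linfty} from the discrete Aronson--Bénilan estimate of Theorem \ref{thm:main_AB} alone, together with conservation of the unit mass. This mimics the continuum argument: a semi-convexity bound on the pressure forces the density to stay large on an $x$-interval of length comparable to $\sqrt{t\,\text{(pressure)}}$ around its maximum.

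Since $|\overline{Z}^{(N)}|^{-1}>0$, estimate \eqref{eq:dAB2} gives $Z^{(N)}_k(t)\ge -1/(ct)$ with $c:=m+1$ for every $k$ and $t>0$. Set $P_k=R_k^m$ and let $v_k=\dff^+_kP$, so that $v_k=-\dot x_k$ by \eqref{eq:particles1_differences}. Then $Z_k=R_kN(v_k-v_{k-1})$, and using $d_k=1/(NR_k)$ the bound becomes
\begin{align*}
v_k-v_{k-1}\ \ge\ -\frac{d_k}{ct}\,,\qquad k=1,\ldots,N .
\end{align*}
This is a one-sided Lipschitz bound on the particle velocity as a function of the Eulerian position. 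Summing over consecutive cells gives $v_j-v_i\ge -(x_j-x_i)/(ct)$ for $j>i$, where $x_j-x_i=\sum_{i<k\le j}d_k$.

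Next I fix a maximizer $k^*$ of $R_k$, hence also of $P_k$. Then $v_{k^*}\le 0\le v_{k^*-1}$, with the conventions \eqref{eq:Rconvent} covering the boundary cases. Combining this with the summed bound gives, for $j\ge k^*$,
\begin{align*}
N(P_{j+1}-P_j)=v_j\ \ge\ -\frac{x_j-x_{k^*-1}}{ct},
\end{align*}
and a mirror statement holds for $j<k^*$. I then convert this statement about $z$-differences of $P=R^m$ into a statement about $x$-differences of $Q:=\frac{m}{m-1}R^{m-1}$, which is the genuine discrete pressure. The aim is the inequality
\begin{align*}
Q_{j+1}-Q_j\ \ge\ \frac{1}{N\,\max(R_j,R_{j+1})}\,N(P_{j+1}-P_j)\qquad\text{whenever }P_{j+1}<P_j,
\end{align*}
which should follow from the elementary convexity and mean-value relation between $a^m-b^m$ and $\frac{m}{m-1}(a^{m-1}-b^{m-1})$. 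The factor $1/(NR)$ is then a cell length, so the right-hand side is $d\cdot v$. Telescoping should yield a discrete parabolic lower bound
\begin{align*}
Q_j\ \ge\ Q_{k^*}-\frac{(x_j-x_{k^*})^2}{2ct}+\text{(controlled cell error)}
\end{align*}
on both sides of $k^*$.

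Finally I sum the mass. As long as $Q_j\ge Q^*/2$, where $Q^*=Q_{k^*}$, the cells to the right and left of $k^*$ have density $R_j\ge 2^{-1/(m-1)}R^*$. The parabolic bound guarantees this on an $x$-interval of length at least about $\sqrt{ct\,Q^*}$, or else the support is exhausted, which is impossible before the mass is used up. Hence
\begin{align*}
1\ \ge\ \sum_j d_jR_j\ \gtrsim\ R^*\sqrt{t\,(R^*)^{m-1}}\,,
\end{align*}
that is, $(R^*)^{m+1}t\lesssim 1$. Tracking the constants, either by using $Q_j\ge Q^*/2$ on each side or by integrating the parabola with a crude lower bound, should produce exactly $(m+1)/(16m\,t)$.

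The main obstacle is the conversion step: passing from the Lagrangian bound on $v_j$, which involves differences of $R^m$, to a clean Eulerian inequality for $R^{m-1}$. Doing this without losing the cell-error terms requires choosing the right one-sided inequality (using $R_j$ versus $R_{j+1}$, depending on monotonicity) and correctly handling the cell containing the maximum. If this proves awkward, my fallback is to work directly with $P$ and the $z$-variable, bounding the number of cells $j$ with $P_j\ge P^*/2$ from below. This uses $d_j\le 2^{1/m}/(NR^*)$ on those cells to relate $x$-distances to index distances, and mass equals $(\text{number of cells})/N$.
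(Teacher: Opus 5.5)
\textbf{The conversion step and the boundary case.} Your first steps are correct. From \eqref{eq:dAB2} you get $v_k-v_{k-1}\ge -d_k/((m+1)t)$, and hence $v_j\ge -(x_j-x_{k^*-1})/((m+1)t)$ for $j\ge k^*$. This local route (one-sided Lipschitz velocity, then a parabolic lower bound for the pressure around a maximizing cell, then a mass count) is genuinely different from the paper's.

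However, the conversion inequality is stated backwards. For $a=R_{j+1}<b=R_j$ one has $\frac{m}{m-1}(b^{m-1}-a^{m-1})=\int_a^b ms^{m-2}\,ds\ge\frac1b\int_a^b ms^{m-1}\,ds=\frac{b^m-a^m}{b}$. So the factor must be $1/(N\min(R_j,R_{j+1}))=d_{j+1}$, i.e.\ $Q_{j+1}-Q_j\ge d_{j+1}v_j$. With that correction, a left Riemann sum gives $Q_j\ge Q^*-(x_j-x_{k^*-1})^2/(2(m+1)t)$ for $j\le N$.

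Also, ``the support is exhausted, which is impossible'' is a continuum reflex. The discrete pressure does not vanish at $x_N$, and a single long cell can jump past the range of the parabola. Both cases must be closed with the jump bound $N(P_j-P_{j+1})\le (x_j-x_{k^*-1})/((m+1)t)$, using the ghost value $P_{N+1}=0$. That is essentially your fallback cell count, and it does yield $\max_kR_k^{m+1}\le C_m/t$ with an $N$-independent $C_m$.

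\textbf{The constant.} The real gap is your final claim that tracking constants gives exactly $\frac{m+1}{16m}$. Nothing in a parabola-plus-mass argument produces this number. Already in the continuum, the half-level truncation gives $\rho_{\max}^{m+1}\le\frac{2^{2/(m-1)}(m-1)}{4m(m+1)t}$; for $m=2$ this is $\frac1{6t}>\frac3{32t}$. Integrating the whole parabola gives the Barenblatt constant instead, and in the discrete setting the cell errors are not uniformly small for small $N$.

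In fact, no argument can deliver the stated constant for every $N$. For $N=1$ the scheme reduces to $\dot d_1=2d_1^{-m}$, so $R_1(t)^{m+1}=(d_1(0)^{m+1}+2(m+1)t)^{-1}$ and $tR_1(t)^{m+1}\to\frac1{2(m+1)}$. This exceeds $\frac{m+1}{16m}$ whenever $1<m<3+2\sqrt2$.

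\textbf{The paper's route.} The $16$ comes from a global argument you do not use:
\begin{itemize}
\item average $-R_k\Delta_kR^m\le\frac1{(m+1)t}$ over $k$ with the uniform Lagrangian weight $\frac1N$;
\item sum by parts via \eqref{eq:byparts} and apply \eqref{eq:ineq1}, reaching $\frac{4m}{(m+1)^2}\frac1N\sum_{k=0}^N(\dff^+_kR^{(m+1)/2})^2\le\frac1{(m+1)t}$;
\item bound $R_k^{(m+1)/2}$ by half the discrete total variation of $R^{(m+1)/2}$, then use Cauchy--Schwarz.
\end{itemize}
This needs no maximizer and no boundary case analysis, since the conventions $R_0=R_{N+1}=0$ are absorbed in the summation by parts. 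But its last Cauchy--Schwarz step runs over $N+1$ differences with weight $\frac1N$. It therefore really gives $\max_kR_k^{m+1}\le\frac{N+1}{N}\cdot\frac{m+1}{16m\,t}$, consistent with the $N=1$ example.

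So what your repaired argument can honestly prove is a bound $C_m/t$ with some other $N$-independent constant. That is enough for the convergence section, but it is not \eqref{eq:Linfty} as written.
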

Theorem \ref{thm:Linfty_main} will be proven in Section \ref{sec:decay}.

Finally, another important application of Theorem \ref{thm:main_AB} is that it implies the convergence of the scheme in the full generality of $L^1$ initial data.

\begin{theorem}[Convergence of the scheme]
  \label{thm:convergence_main}
  Let $\overline{\rho}$ satisfy \eqref{eq:In},
  and assume the initial sampling map $\sam^N$ satisfies \eqref{eq:initial_sampling}  and \eqref{eq:sampling_support_N}.
  Let $X^{(N)}(t)$ be the solution to the scheme \eqref{eq:particles1} and let $\rho^{(N)}(t,x)$ be as in \eqref{eq:approximate_density}.
  Then, for all $T\geq 0$, $\rho^{(N)}$ converges in $L^1([0,T]\times \R)$ to a very weak solution $\rho\in L^1\cap L^m([0,T]\times \R)$ of the porous medium equation \eqref{eq:PME}
  with initial datum $\overline{\rho}$, i.e.,
  \begin{align}
    \label{eq:weakPME}
    \int_0^T \intR \big(\rho\,\partial_t\varphi+\rho^m\,\partial_{xx}\varphi\big)\dd x\dd t = -\intR\varphi(0,x)\overline{\rho}(x)\,dx\,,
  \end{align}
  for every test function $\varphi\in C^\infty_c([0,+\infty)\times \R)$.
\end{theorem}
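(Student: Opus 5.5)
Our plan is the classical compactness approach, with the discrete Aronson--Bénilan estimate of Theorem \ref{thm:main_AB} providing all the needed regularity. Throughout we use the bound $Z^{(N)}_k(t)\ge -1/((m+1)t)$, which follows from \eqref{eq:dAB2}.

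\textbf{Step 1: uniform bounds.}
\begin{itemize}
\item Mass is conserved, and $\rho^{(N)}(t,\cdot)$ is a probability density for every $t$.
\item Theorem \ref{thm:Linfty_main} gives $\|\rho^{(N)}(t)\|_\infty\le C t^{-1/(m+1)}$, uniformly in $N$. Since $m/(m+1)<1$, the norm $\|\rho^{(N)}\|_{L^m}^m\le \int_0^T\|\rho^{(N)}\|_\infty^{m-1}\,\dd t$ is uniformly bounded.
\item Tightness in $x$ follows from the first moment. Testing \eqref{eq:particles1_differences} against $|x_i|$, or using the second moment $\frac1N\sum x_i^2$, one computes $\frac{d}{dt}\frac1N\sum_i x_i^2=2\sum_k R_k^{m-1}+(\text{boundary terms})$. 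This is controlled by Theorem \ref{thm:Linfty_main}, with the boundary contributions handled via \eqref{eq:sampling_support_N}. Assumption \eqref{eq:sampling_support_N} also ensures that the initial discrepancy between $\frac1{N+1}\sum\delta_{x_i}$ and $\rec^N$ vanishes.
\end{itemize}

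\textbf{Step 2: compactness in space and time.}
\begin{itemize}
\item For spatial compactness, translate the AB bound. It says that $\Delta_k R^m\ge -1/((m+1)tR_k)$. In Eulerian terms, $(\rho^{(N)})^{m-1}$ is semiconvex up to discretization corrections. Equivalently, $\dff^+R^m$ (the discrete pressure gradient, i.e. the particle velocity) is almost nondecreasing in $k$, with its decrease per cell bounded by $\frac{1}{N}\cdot\frac{1}{(m+1)tR_k}=\frac{d_k}{(m+1)t}$. Summing over $k$, the total decrease of the velocity over the support is bounded by $\mathcal L^N/((m+1)t)$. Since velocities vanish outside, each velocity is then bounded, and the velocity has bounded total variation, on $[\tau,T]$, locally in $x$, with the support controlled by Theorem \ref{thm:support_main} when $\mathcal L^N[X(0)]$ is finite. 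The total variation bound of the velocity transfers to a bound on the total variation of $(\rho^{(N)})^m$ in $x$: the differences $R_{k+1}^m-R_k^m$ are exactly the velocities divided by $N$, and the sum over $k$ of $|\dff^+_k R^m|/N$ equals a sum of $|\dot x_k|$ weighted by $1/N$. To obtain a TV bound of $R^m$ itself, we use that a semiconvex function of bounded sup has TV bounded by its sup times $O(1)$ plus the length times the semiconvexity constant. This gives $\mathrm{TV}_x((\rho^{(N)})^m(t))\le C(\tau,T)$ locally.
\item For time equicontinuity, bound $d_1(\rho^{(N)}(t),\rho^{(N)}(s))$ by \eqref{eq:wasserstein}, using $|\dot x_i|$ bounded in $L^1$ in time (via $\sum_i\frac1N|\dot x_i|\le\sum|R_{k+1}^m-R_k^m|$).
\item An Aubin--Lions/Kolmogorov type argument, with monotonicity of $s\mapsto s^{1/m}$ to go from $(\rho^{(N)})^m$ back to $\rho^{(N)}$, then gives strong $L^1_{loc}((0,T]\times\R)$ compactness. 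Near $t=0$, the $L^1$ smallness of the strip $[0,\tau]$ is handled by the mass bound. Tightness upgrades this to $L^1([0,T]\times\R)$.
\end{itemize}

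\textbf{Step 3: consistency.} Take $\varphi$ and compute $\frac{d}{dt}\frac1N\sum_{i}\varphi(x_i)$ along \eqref{eq:particles1_differences}, then sum by parts. This yields
$$\sum_k\frac{R_k^m}{N}\cdot N^2\big(\varphi(x_k)-\varphi(x_{k-1})\big)-\ldots,$$
which after Taylor expansion equals $\sum_k R_k^m d_k\,\varphi_{xx}(x_k)+\text{error}$. The error terms are of order $\|\varphi'''\|\sum_k R_k^m d_k^2$, that is $O(\mathcal L^N\max_k R_k^{m-1}/N)$, which is small since $\max_k R_k^{m-1}$ is integrable near $0$ and $\mathcal L^N$ grows as in Theorem \ref{thm:support_main}. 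Here assumption \eqref{eq:sampling_support_N} handles possibly unbounded initial supports. The difference between point evaluations $\frac1N\sum\varphi(x_i)$ and $\int\varphi\rho^{(N)}$ is bounded by $\|\varphi'\|\max_k d_k/N\le \|\varphi'\|\mathcal L^N/N$, which vanishes by \eqref{eq:sampling_support_N}. Passing to the limit using strong convergence, together with the $L^m$ bound to get convergence of $(\rho^{(N)})^m$ in $L^1$, and the initial condition from \eqref{eq:initial_sampling}, gives \eqref{eq:weakPME}.

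\textbf{Full-sequence convergence.} By the uniqueness of very weak solutions in this class \cite{pierre_nonlinear_analysis_1982}, the whole sequence converges.

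\textbf{Main obstacle.} The main difficulty is the spatial compactness in Step 2. Extracting an $N$-uniform BV-type bound from the one-sided bound on $\Delta_kR^m$ requires care near the edges of the support and for small densities.
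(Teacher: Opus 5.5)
Your overall architecture matches the paper's: uniform bounds, then Aubin--Lions compactness from a BV-type spatial functional plus Wasserstein equicontinuity, then consistency via $\frac{\dn}{\dn t}\frac1N\sum_i\varphi(x_i)$, then uniqueness. Step 3 is fine. The genuine gap is Step 2, which you flag yourself as the main obstacle and do not resolve.

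\textbf{Why Step 2 fails as written.} Your pointwise use of the discrete AB inequality gives $\max_k|\dot x_k|\le \mathcal L^N/((m+1)t)$, and hence $\mathrm{TV}\big((\rho^{(N)})^m(t)\big)=\frac1N\sum_k|\dot x_k|$ of order $\mathcal L^N/t$. This is uniform in $N$ only if $\mathcal L^N[X^{(N)}(0)]$ is bounded in $N$, essentially for compactly supported data with support-preserving sampling. The theorem covers arbitrary $\overline\rho\in\mathcal P(\R)\cap L^1(\R)$, where \eqref{eq:sampling_support_N} gives only $\mathcal L^N=o(\sqrt N)$. So your spatial bound can blow up as $N\to\infty$. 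Your equicontinuity bound inherits the same defect, since you derive it from the same quantity; it is also not integrable at $t=0$. The remedy ``locally in $x$'' is not carried out. Your argument anchors the velocities by their vanishing outside the \emph{whole} support, which is intrinsically global. The step ``a semiconvex function of bounded sup has bounded TV'' is an Eulerian heuristic, not derived from the Lagrangian inequality $R_k\Delta_kR^m\ge-1/((m+1)t)$.

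\textbf{Missing idea: average the AB bound and sum by parts.} This turns the one-sided pointwise bound into a two-sided, support-independent $\ell^2$ bound.
\begin{itemize}
\item Spatial compactness: since each $-R_k\Delta_kR^m\le \frac1{(m+1)t}$ by \eqref{eq:dAB2}, \eqref{eq:byparts} gives $\frac1N\sum_{k=0}^N\dff^+_kR\,\dff^+_kR^m=-\frac1N\sum_{k=1}^NR_k\Delta_kR^m\le\frac1{(m+1)t}$. Combined with \eqref{eq:ineq1}, this yields $\mathrm{TV}\big((\rho^{(N)})^{\frac{m+1}2}(t)\big)\le C(m)\,t^{-1/2}$. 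This bound is integrable in time and independent of $N$ and of the support; it is exactly the paper's functional $\fnc$.
\item Equicontinuity: weighting instead with $R_k^{m-1}$ and using Theorem \ref{thm:Linfty_main},
\[
\frac1N\sum_k\dot x_k^2=-\frac1N\sum_kR_k^m\Delta_kR^m\le\frac{\max_kR_k^{m-1}}{(m+1)t}\le Ct^{-2m/(m+1)},
\]
so $\frac1N\sum_k|\dot x_k|\le Ct^{-m/(m+1)}$. This gives the $d_1$-equicontinuity, as in the paper. It would also rescue your own choice $\mathrm{TV}\big((\rho^{(N)})^m\big)$.
\end{itemize}

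\textbf{Tightness.} Your moment-based argument needs $\sup_N\frac1N\sum_i|\overline x_i|<\infty$ (or second moments), which \eqref{eq:In} does not provide. (Also, the identity is $\frac{\dn}{\dn t}\frac1N\sum_ix_i^2=\frac2N\sum_kR_k^{m-1}$, with no boundary terms.) Tightness should instead come from the narrow convergence \eqref{eq:initial_sampling} of the initial samples, combined with the displacement bound $\frac1N\sum_i|x_i(t)-x_i(0)|\le Ct^{1/(m+1)}$, which follows from the same velocity estimate.
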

Theorem \ref{thm:convergence_main} will be proven in Section \ref{sec:convergence}.

%%%%%%%%%%%%%%%%%%%%%%%%%%%%%%%%%%%%%%%%%%%%%%%%%%%%%%%
%%%%%%%%%%%%%%%%%%%%%%%%%%%%%%%%%%%%%%%%%%%%%%%%%%%%%%%
\section{Well posedness of the scheme and proof of Theorem \ref{thm:main_AB}}
\label{sec:proof_estimate}

In this section and in the next two, we will often lighten the notation and suppress the superscript $^{(N)}$ for the solution to the particle scheme \eqref{eq:particles1} and their related variables $R_k, d_k, Z_k$. We will maintain the superscript in the statements of all the results for clarity. The ODE system \eqref{eq:particles1} has a unique local-in-time solution, which may be proven to be global.

\begin{proposition}[Global Existence and Discrete Minimum Principle]\label{prop:MP}
Let $\overline{\rho}\in \mathcal{P}(\mathbb{R})\cap L^1(\mathbb{R})$ and let $\overline{X}^{(N)}$ be defined by \eqref{eq:initial_sampling_X}, with $\sam^N$ satisfying \eqref{eq:initial_sampling} and \eqref{eq:initial_sampling_2}. Then, the Cauchy problem \eqref{eq:particles1} with initial datum $\overline{X}^{(N)}$ has a unique, global-in-time solution. Moreover, the estimate
\begin{equation}\label{eq:MP}
        d^{(N)}_i(t)\geq \overline{d}^{(N)}=\min\left\{d^{(N)}_i[\overline{X}^{(N)}]\,:\,\, i=1,\ldots,N\right\}
    \end{equation}
    holds for all $t\geq 0$, with $d^{(N)}_i$ defined in \eqref{eq:def_di}.
\end{proposition}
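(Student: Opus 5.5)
Since $\sam^N[\overline{\rho}]\in\mathcal{K}_N^\circ$, all initial gaps are strictly positive, so $\overline{d}^{(N)}>0$. On the open set $\{X\in\R^{N+1}:\,x_i>x_{i-1}\text{ for all }i\}$ the right-hand side of \eqref{eq:particles1} is a smooth (rational) function of $X$. Cauchy--Lipschitz therefore gives a unique maximal solution on some interval $[0,T^*)$, on which all $d_i(t)>0$. The plan is to first prove the lower bound \eqref{eq:MP} on $[0,T^*)$, and then use it, together with a growth bound, to exclude blow-up and conclude $T^*=+\infty$.

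For the minimum principle I would work with the densities $R_i=1/(Nd_i)$ and show that $M(t)=\max_i R_i(t)$ is nonincreasing, which is equivalent to \eqref{eq:MP}. Using \eqref{eq:particles1_differences} and the convention \eqref{eq:Rconvent},
\[
\dot d_i=\dot x_i-\dot x_{i-1}=-\tfrac1N\Delta_i R^m ,\qquad
\dot R_i=-N R_i^2\dot d_i=R_i^2\,\Delta_i R^m ,\qquad i=1,\ldots,N,
\]
which is the discrete analogue of \eqref{eq:LPME}. Suppose $k$ is an index with $R_k(t)=M(t)$. Then $R_k^m\ge R_{k\pm1}^m$, where a missing neighbor counts as $0$. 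Hence $\Delta_kR^m\le0$ and so $\dot R_k\le0$. Since $M$ is a maximum of finitely many $C^1$ functions, it is Lipschitz, and at almost every $t$ its derivative equals $\dot R_k(t)$ for some maximizing index $k$. This gives $\dot M\le0$, so $R_i(t)\le M(0)=1/(N\overline d^{(N)})$, i.e.\ $d_i(t)\ge\overline d^{(N)}$ for all $i$ and all $t<T^*$.

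To get global existence I would also bound the solution from above. By the previous step $R_i\le M(0)$, so every velocity satisfies $|\dot x_i|\le 2N M(0)^m$. Consequently $|x_i(t)|\le|\overline{x}_i|+2NM(0)^m t$, and the solution stays bounded on bounded time intervals. Suppose $T^*<\infty$. The trajectory then remains in the compact set $\{d_i\ge\overline d^{(N)},\ |x_i|\le C(T^*)\}$, which lies inside the open domain and on which the vector field is Lipschitz. This contradicts the standard blow-up alternative for maximal solutions, so $T^*=\infty$.

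I expect the only delicate point to be the handling of the nonsmooth maximum $M(t)$. This can be done either with Dini derivatives, or by a contradiction argument: take the first time at which $\max_i R_i$ exceeds $M(0)+\varepsilon$ and note that at such a time the relevant $R_k$ would need a positive derivative, contradicting $\Delta_k R^m\le0$. Finally, the boundary indices need a separate check. For $k=1$ we have $\Delta_1R^m=N^2(R_2^m-2R_1^m)\le -N^2R_1^m<0$ whenever $R_1$ is maximal, and the case $k=N$ is symmetric; so these indices are covered automatically by the zero convention.
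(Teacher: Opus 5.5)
Your proof is correct and follows essentially the paper's route: at an index of maximal density (equivalently, minimal gap) one has $\Delta_k R^m\le 0$, which gives the discrete minimum principle, and global existence then follows by continuation. Your a.e.-derivative treatment of $\max_i R_i$ and your explicit velocity bound are more careful than the paper's argument, which tacitly assumes that a single index realizes the minimal gap on a whole interval $[t^*,t^*+\varepsilon)$.

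The one slip is in your optional ``first time'' variant. At the first time $\max_i R_i$ reaches $M(0)+\varepsilon$ you only get $\dot R_k\ge 0$, which does not contradict $\dot R_k\le 0$. That variant therefore needs a strictly increasing barrier such as $M(0)+\varepsilon(1+t)$.
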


\begin{proof}
    The assumption \eqref{eq:initial_sampling_2} ensures $\overline{x}_i<\overline{x}_{i+1}$ for all $i=0,\ldots,N-1$. Hence, from the classical Cauchy-Lipschitz theory, a unique solution $[0,T)\ni t\mapsto X(t)$ exists for a sufficiently small $T>0$. We prove \eqref{eq:MP} by a contradiction argument. Let $t^*\geq 0$ be the first time at which
    \[\min_{i=1,\ldots,N}d_i(t^*)=\overline{d}\]
    and
    \[\min_{i=1,\ldots,N}d_i(t^*)<\overline{d}\qquad \hbox{on the interval $(t^*,t^*+\varepsilon)$}\]
for some $\varepsilon>0$. By possibly choosing a smaller $\varepsilon$, there exists an index $i_0\in \{1,\ldots,N\}$ such that
\[d_{i_0}(t)=\min_{i=1,\ldots,N}d_i(t)\]
for $t\in [t^*,t^*+\varepsilon)$, which implies for $t\in (t^*,t^*+\varepsilon)$
\begin{align*}
    & d_{i_0}(t)=\overline{d}+\int_{t^*}^t\left(\dot{x}_{i_0}(s)-\dot{x}_{i_0-1}(s)\right) ds = \overline{d}-N\int_{t^*}^t\left(R_{i_0+1}(s)^m+R_{i_0-1}(s)^m-2R_ {i_0}(s)^m\right) ds,
\end{align*}
using the convention \eqref{eq:Rconvent}.
Now, for $t\in [t^*,t^*+\varepsilon)$ we have
\[R_{i_0}(t)=\max_{i=1,\ldots,N}R_i(t)\geq \max\left\{R_{i_0-1}(t),R_{i_0+1}(t)\right\}\,,\]
which implies
\[d_{i_0}(t)\geq\overline{d}\qquad \hbox{on $t\in (t^*,t^*+\varepsilon)$}\,,\]
which is a contradiction. We have therefore proven \eqref{eq:MP} on the short-time existence interval $[0,T)$. Global existence then follows by a standard continuation argument, and \eqref{eq:MP} follows for all times.
\end{proof}
As a consequence of Proposition \ref{prop:MP}, the approximated density $\rho^{(N)}$ in \eqref{eq:approximate_density}
is defined globally in time. From \eqref{eq:particles1} we easily obtain the ODE system for the discrete densities $R^{(N)}_k(t)=R_k(t)$
\begin{equation}\label{eq:particles_R}
\dot{R}_i(t)=
\begin{dcases}
N^2 R_1(t)^2(R_2^m(t)-2R_1^m(t)) & \hbox{if $i=1$} \\
N^2R_i(t)^2\left(R_{i+1}^m(t)+R_{i-1}^m(t)-2R_i^m(t)\right) & \hbox{if $i\in \{2,\ldots,N-1$\}}\\
N^2 R_{N}(t)^2(R_{N-1}^m(t)-2 R_{N}^m(t)) & \hbox{if $i=N$}
\end{dcases}
\end{equation}
which may be re-written as
\begin{equation}\label{eq:particles_R_2}
    \dot{R}_i(t)=R_i(t)^2 \Delta_i[R^m(t)]=R_i(t)Z_i(t)\,,\qquad i\in\{0,\ldots,N-1\}\,.
\end{equation}
We are now ready to prove Theorem \ref{thm:main_AB}.

% \begin{theorem}[Discrete Aronson Bénilan estimate]\label{prop:AB}
% There holds
% \begin{equation}\label{eq:dAB_initial}
%     \overline{Z}=\min_{k\in \{0,\ldots,N-1\}}Z_k(0)<0\,.
% \end{equation}
% Moreover, for all $t\geq 0$, the quantity $Z$ defined in \eqref{eq:dZ} satisfies
% % \begin{equation}\label{eq:dAB}
% %         Z_k(t)\geq -\frac{1}{(m+1)t}\,,\qquad \hbox{for all $k\in\{1,\ldots,N-1\}$}\,.
% %     \end{equation}
% % Moreover, if
% % \[\overline{Z}=\min_{i=0,\dots,N-1}Z_i(0)<0\]
% % then
% \begin{equation}\label{eq:dAB2}
%         Z_k(t)\geq -\frac{1}{|\overline{Z}|^{-1}+(m+1)t}\,,\qquad \hbox{for all $k\in\{0,\ldots,N-1\}$}\,.
%     \end{equation}
% \end{theorem}

\begin{proof}[Proof of Theorem \ref{thm:main_AB}]
  The inequality \eqref{eq:dAB_initial} follows from Lemma \ref{lem:technical1}.
  Differentiating $Z_k(t)$ in time, recalling the convention \eqref{eq:Rconvent}, we obtain
  \begin{align*}
    \dot Z_k
    &= \dot R_k\,\Delta_k R^m + N^2 m R_k\big(R_{k+1}^{m-1}\dot R_{k+1} + R_{k-1}^{m-1}\dot R_{k-1} - 2R_k^{m-1}\dot R_k) \\
    &= \big[R_k\Delta R_k^m\big]^2 + N^2 m R_k\big( R_{k+1}^{m+1}\Delta_{k+1} R^m + R_{k-1}^{m+1}\Delta_{k-1}R^m - 2R_k^{m+1}\Delta_k R^m \big) \\
    & = Z_k^2 + N^2 m R_k\big(R_{k+1}^m Z_{k+1} + R_{k-1}^m Z_{k-1} - 2R_k^m Z_k\big)\,.
  \end{align*}
  Fix $t>0$ and assume that $Z_k(t)$ attains its minimal value at $k=k^*$ on the interval $[t,t+\varepsilon)$. In particular, $Z_{k^*+1}(t)\ge Z_{k^*}(t)$ and $Z_{k^*-1}(t)\ge Z_{k^*}(t)$. Consequently, for all $k=0,\ldots,N-1$,
    \begin{align*}
        \dot Z_{k^*}(t)
        &\ge Z_{k^*}(t)^2 + N^2 m R_{k^*}(t)\big( R_{k^*+1}(t)^m + R_{k^*-1}(t)^m - 2R_{k^*}(t)^m\big) Z_{k^*}(t) \\
        &= Z_{k^*}(t)^2 + m R_{k^*}(t) \big(\Delta_{k^*}R(t)^m\big)Z_{k^*}(t) \\
        &= (m+1)Z_{k^*}(t)^2.
    \end{align*}
%     By integrating in time on $\tau\in (t,t+h]$, we get
%     \[
%     Z_k(\tau)\geq Z_k(t)+(m+1)\int_t^\tau Z_{k^*}(s)^2 ds\geq Z_{k^*}(t)+(m+1)\int_t^\tau Z_{k^*}(s)^2 ds
%     \]
%     and by taking the infimum with respect to $k$ on the left-hand side we obtain, for $\tau=t+h$,
%     \[
%     \frac{Z_{k^*}(t+h)-Z_{k^*}(t)}{h}\geq \frac{1}{h}\int_t^{t+h}Z_{k^*}(s)^2 ds\,.
%     \]
%     By letting $h\searrow 0$ we obtain
% \begin{equation}\label{eq:AB1}
%         \dot{Z}_{k^*}(t)\geq (m+1)Z_{k^*}(t)^2\,.
%     \end{equation}
Hence, the function
\[[0,+\infty)\ni t\mapsto Z(t)=\min_{k\in \{0,\ldots,N-1\}}Z_k(t)\]
satisfies the differential inequality
\begin{equation}\label{eq:AB2}
       \dot{Z}(t)\geq (m+1)Z(t)^2\,.
    \end{equation}
    A simple comparison argument for ODEs shows that $Z(t)\geq y(t)$,
    where
    \[y(t)=\frac{1}{\frac{1}{y(0)}-(m+1)t}\,.\]
    is the unique solution to the Cauchy problem
    \[\dot{y}(t)=(m+1)y(t)^2\,,\qquad y(0)=\overline{Z}\,.\]
    Since $y(0)<0$ we get the desired inequality \eqref{eq:dAB2}.
    % Thus, the minimum $f(t):=\min_k Z_k(t)$ satisfies again the ODE \eqref{eq:ode}, which implies \eqref{eq:est}, and thus shows \eqref{eq:dAB}.
\end{proof}

\section{Uniform growth estimate of the support}\label{sec:support}

%%%%%%%%%%%%%%%%%%%%%%%%%%%%%%%%%%%%%%%%%%%%%%%%%%%%%%%
%%%%%%%%%%%%%%%%%%%%%%%%%%%%%%%%%%%%%%%%%%%%%%%%%%%%%%%
%
% \textcolor{blue}{
%   The following result (which is used in the proof of convergence) should follow from the considerations here:
%   %
%   \begin{lemma}
%     \label{lem:Lest}
%     There is a universal constant $\Lambda>0$ such that the following is true
%     for every solution $x:[0,\infty)\to\R^{N+1}$ to \eqref{eq:particles1}:
%     \begin{align}
%       \label{eq:Lest}
%       x_N(t)-x_0(t) \le x_N(0)-x_0(0) + \Lambda t^{1/(m+1)} \quad \text{for all $t\ge0$}.
%     \end{align}
%   \end{lemma}
% }

In this section, we apply Theorem \ref{thm:main_AB} to detect a uniform bound for the distance
\[\mathcal{L}^{N}[X^{(N)}(t)]=x^{(N)}_N(t)-x^{(N)}_0(t)\]
with respect to the number of particles $N$ for some time $t\geq0$. The sampling map $\mathcal{S}^N$ for the initial condition $\overline{\rho}$ is kept general and satisfying \eqref{eq:initial_sampling} and \eqref{eq:initial_sampling_2}.

\begin{proposition}\label{eq:prop_support_1}
    For all $t\geq 0$ we have
    \begin{equation}\label{eq:estimate_support_1}
        \mathcal{L}^{N}[X^{(N)}(t)]\leq \mathcal{L}^{N}[X^{(N)}(0)](1+|\overline{Z}^{(N)}|(m+1)t)^{\frac{1}{m+1}}\,.
    \end{equation}
\end{proposition}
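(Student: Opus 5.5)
The plan is to derive a differential inequality for $\mathcal{L}(t):=\mathcal{L}^N[X^{(N)}(t)]$ in which only the quantities $Z_k$ appear, and then to apply the lower bound \eqref{eq:dAB2} of Theorem \ref{thm:main_AB}. The starting point is to write the length as a sum of interval lengths, $\mathcal{L}(t)=x_N(t)-x_0(t)=\sum_{k=1}^N d_k(t)$. By Proposition \ref{prop:MP} every $d_k$ stays bounded away from zero, so every $d_k$ is differentiable for all times.

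The key identity follows from $d_k=1/(NR_k)$ together with \eqref{eq:particles_R_2}, namely $\dot R_k=R_kZ_k$:
\begin{align*}
  \dot d_k = -\frac{\dot R_k}{N R_k^2} = -\frac{Z_k}{N R_k} = -Z_k\, d_k ,\qquad k=1,\ldots,N.
\end{align*}
Summing over $k$ gives $\dot{\mathcal{L}}(t)=-\sum_{k=1}^N Z_k(t)d_k(t)$. As a sanity check, the same quantity should equal $N R_N^m+NR_1^m$ by \eqref{eq:particles1}; this can be confirmed by a summation by parts, but it is not needed.

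Next I would invoke \eqref{eq:dAB2}. Since $Z_k(t)\ge -\big(|\overline{Z}^{(N)}|^{-1}+(m+1)t\big)^{-1}$ and $d_k>0$, each term satisfies $-Z_kd_k\le d_k\big(|\overline{Z}^{(N)}|^{-1}+(m+1)t\big)^{-1}$. Summing over $k$ yields
\begin{align*}
  \dot{\mathcal{L}}(t)\le \frac{\mathcal{L}(t)}{|\overline{Z}^{(N)}|^{-1}+(m+1)t}
  = \frac{|\overline{Z}^{(N)}|}{1+|\overline{Z}^{(N)}|(m+1)t}\,\mathcal{L}(t).
\end{align*}
Here \eqref{eq:dAB_initial} is what makes $|\overline{Z}^{(N)}|^{-1}$ finite and the right-hand side well defined.

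The last step is Grönwall's lemma. Dividing by $\mathcal{L}>0$ and integrating from $0$ to $t$ gives $\log\big(\mathcal{L}(t)/\mathcal{L}(0)\big)\le\frac{1}{m+1}\log\big(1+|\overline{Z}^{(N)}|(m+1)t\big)$, which is exactly \eqref{eq:estimate_support_1}. No step is a genuine obstacle. The only point needing care is the sign bookkeeping: the \emph{lower} bound on $Z_k$ must turn into an \emph{upper} bound on $\dot{\mathcal{L}}$, and this relies on the positivity of the weights $d_k$.
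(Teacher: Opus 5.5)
Your proof is correct and is essentially the paper's argument: both combine $\dot R_k=R_kZ_k$ with the lower bound \eqref{eq:dAB2} and an ODE comparison. The only difference is the order of operations. The paper applies the comparison to each $R_k$ separately, which gives the slightly stronger per-interval bound $d_k(t)\le d_k(0)\big(1+|\overline{Z}^{(N)}|(m+1)t\big)^{1/(m+1)}$, and then sums; you sum $\dot d_k=-Z_kd_k$ first and apply Grönwall to $\mathcal{L}$ directly.
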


\begin{proof}
The ODE \eqref{eq:particles_R_2} and Theorem \ref{thm:main_AB} imply for $i=0,\ldots,N-1$
\begin{align*}
    & \dot{R}_i(t)\geq -\frac{R_i(t)}{|\overline{Z}|^{-1}+(m+1)t}\,.
\end{align*}
By a comparison argument for ODEs, $R_i(t)\geq u(t)$ with $u(t)$ being the only solution to the Cauchy problem
\[\dot{u}(t)=-\frac{u(t)}{|\overline{Z}|^{-1}+(m+1)t}\,,\qquad u(0)=R_i(0)\,,\]
which has the explicit solution
\[u(t)=R_i(0)(1+|\overline{Z}|(m+1)t)^{-\frac{1}{m+1}}\]
which in turns implies for all $t\geq 0$ and $i=0,\ldots,N-1$
\[R_i(t)\geq R_i(0)(1+|\overline{Z}|(m+1)t)^{-\frac{1}{m+1}}\,.\]
    Hence, we have the estimate
    \begin{align*}
        & \mathcal{L}^N[X(t)]=x_N(t)-x_0(t)=\sum_{i=0}^{N-1}(x_{i+1}(t)-x_i(t)) = \sum_{i=0}^{N-1}d_i(t)\\
        & \ = \sum_{i=0}^{N-1}\frac{1}{NR_i(t)}\leq \frac{1}{N}\sum_{i=0}^{N-1} \frac{1}{R_i(0)}(1+|\overline{Z}|(m+1)t)^{\frac{1}{m+1}} = \sum_{i=0}^{N-1}d_i(0)(1+|\overline{Z}|(m+1)t)^{\frac{1}{m+1}}\\
        & \ = \mathcal{L}^N[X(0)](1+|\overline{Z}|(m+1)t)^{\frac{1}{m+1}}\,.
    \end{align*}
\end{proof}
The estimate in Proposition \ref{eq:prop_support_1} is uniform with respect to $N$ only under the condition that the initial quantity $\overline{Z}^{(N)}$ is uniformly bounded from below with respect to $N$, which might not be the case if we are sampling an initial condition the space derivative of which has decreasing jumps. Our next goal is therefore to extend the above estimate to any initial datum satisfying \eqref{eq:In}. For a given particle trajectory $X^{(N)}(t)=(x^{(N)}_0(t),\ldots,x^{(N)}_N(t))$ we define
$\mathcal{X}_{X^{(N)}}:[0,1)\times [0,+\infty)\rightarrow \mathbb{R}$ by
\[\mathcal{X}_{X^{(N)}}(z,t)=\sum_{i=0}^{N-1}x^{(N)}_i(t)\mathbf{1}_{[i/N,(i+1)/N)}(z)\,.
\]
Given two particles trajectories $X^{(N)}(t)=(x^{(N)}_0(t),\ldots,x^{(N)}_N(t))$ and $Y^{(N)}(t)=(y^{(N)}_0(t),\ldots,y^{(N)}_N(t))$ and $h\in \mathbb{N}$, we set
\[d_h(X^{(N)}(t),Y^{(N)}(t))=\|\mathcal{X}_{X^{(N)}}(\cdot,t)-\mathcal{X}_{Y^{(N)}}(\cdot,t)\|_{L^{2h}((0,1))}\]
with the convention
\[d_\infty(X^{(N)}(t),Y^{(N)}(t))=\|\mathcal{X}_{X^{(N)}}(\cdot,t)-\mathcal{X}_{Y^{(N)}}(\cdot,t)\|_{L^{\infty}((0,1))}\,.\]
We prove the following result, which is a continuous-in-time version of a similar result in \cite{Gosse}. Although our result differs slightly from the result in \cite{Gosse}, the main idea behind the proof is the same.
\begin{proposition}\label{prop:contraction}
    The quantities $d_h(X^{(N)}(t),Y^{(N)}(t))$ for $h\in \mathbb{N}$ and $d_\infty(X^{(N)}(t),Y^{(N)}(t))$ are non increasing in time.
\end{proposition}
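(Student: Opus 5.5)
The plan is a direct energy computation on the explicit ODE. Set $e_i(t)=x_i(t)-y_i(t)$ for $i=0,\ldots,N$, $P_i=\rmap_i^N[X]^m$, $Q_i=\rmap_i^N[Y]^m$ for $i=1,\ldots,N$, and $w_i=P_i-Q_i$, with $w_0=w_{N+1}=0$ by \eqref{eq:Rconvent}. By the definition of $\mathcal{X}_{X^{(N)}}$,
\[
d_h(X(t),Y(t))^{2h}=\frac1N\sum_{i=0}^{N-1}e_i(t)^{2h}.
\]
I will differentiate this using \eqref{eq:particles1_differences}, which gives $\dot e_i=-N(w_{i+1}-w_i)$:
\[
\frac{\dn}{\dn t}d_h^{2h}=-2h\sum_{i=0}^{N-1}e_i^{2h-1}(w_{i+1}-w_i).
\]

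Next I will sum by parts, using $w_0=0$:
\[
\sum_{i=0}^{N-1}e_i^{2h-1}(w_{i+1}-w_i)=\sum_{j=1}^{N-1}w_j\bigl(e_{j-1}^{2h-1}-e_j^{2h-1}\bigr)+e_{N-1}^{2h-1}w_N .
\]
Each interior term is nonnegative. Indeed, $d_j[X]-d_j[Y]=e_j-e_{j-1}$, and $s\mapsto (Ns)^{-m}$ is decreasing. So $w_j$ has the sign of $e_{j-1}-e_j$, which is also the sign of $e_{j-1}^{2h-1}-e_j^{2h-1}$ because $s\mapsto s^{2h-1}$ is increasing. If the sum ran over all $N+1$ particles, the last term would instead be $w_N(e_{N-1}^{2h-1}-e_N^{2h-1})$, which is again nonnegative, and monotonicity would follow at once.

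The main obstacle is the boundary term $e_{N-1}^{2h-1}w_N$. It appears because $x_N$ is omitted from $\mathcal{X}_{X^{(N)}}$, and I cannot give it a sign. In fact a check with $N=1$ shows the claim fails literally as written. Take $X=(0,1)$ and $Y=(0,2)$. Then $d_h=|x_0-y_0|$ starts at $0$, while $\dot x_0=-1$ and $\dot y_0=-2^{-m}$, so $d_h$ strictly increases. So, for the proposition as worded, I would prove the result with the interior terms handled as above. The boundary contribution must then be absorbed, and I see two natural routes. The first is to use the value at $z=1$, namely the endpoint $x_N$, as part of the piecewise representation; this is the way the Wasserstein-type distance \eqref{eq:wasserstein} naturally includes both ends. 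The second is to restrict to pairs with common right endpoint behaviour. I expect the first route to be the intended reading. With it, the same summation by parts gives $\frac{\dn}{\dn t}d_h^{2h}\le 0$ for every $h\in\mathbb N$.

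Finally, for $d_\infty$ I will use $d_\infty=\lim_{h\to\infty}d_h$, since $\|\cdot\|_{L^{2h}((0,1))}\to\|\cdot\|_{L^\infty((0,1))}$ on the probability space $(0,1)$. A pointwise limit of nonincreasing functions is nonincreasing. Alternatively, a direct maximum-principle argument applies. At an index $i$ realizing $\max|e_i|$, say with $e_i>0$, the neighbours satisfy $e_{i\pm1}\le e_i$. This yields $w_{i+1}\ge0$ and $w_i\le0$, hence $\dot e_i\le0$, subject to the same caveat at the right boundary.
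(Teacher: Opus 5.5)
You follow the same route as the paper (differentiate $d_h^{2h}$, sum by parts, and sign each term via the monotonicity of $s\mapsto (Ns)^{-m}$ and $s\mapsto s^{2h-1}$). Your diagnosis is also correct. In the paper's summation by parts, the shifted sum $\sum_{i=0}^{N-1}e_i^{2h-1}w_{i+1}$ is re-indexed over $i=0,\ldots,N-1$ instead of $i=1,\ldots,N$, so the boundary term $-2h\,e_{N-1}^{2h-1}w_N$ is silently lost. The statement is therefore false as literally written. Your example works, and for any $N$ one can take $\bar X,\bar Y$ differing only in the last particle, so that $d_h(0)=0$ while $\dot e_{N-1}(0)=-Nw_N(0)\neq0$.

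Your fix is to run the sum over all $N+1$ particles, i.e.\ to use $\frac1N\sum_{i=0}^N|x_i-y_i|^{2h}$ and its limit $\max_{0\le i\le N}|x_i-y_i|$. This is exactly the version the paper needs in the proof of Theorem~\ref{thm:support_main}, where $|x_N(t)-y_N(t)|\le d_\infty(X(t),Y(t))$ is used. Formulate it as this discrete sum, not via the $L^{2h}$ norm of the piecewise-linear pseudo-inverses in \eqref{eq:wasserstein}: the termwise argument does not apply verbatim to the latter.
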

\begin{proof}
    For $h$ finite we compute
    \begin{align*}
        & \frac{d}{dt}d_h(X(t),Y(t))^{2h}=\frac{d}{dt}\int_0^1 \left(\mathcal{X}_{X}(z,t)-\mathcal{X}_{Y}(z,t)\right)^{2h}dz=\frac{1}{N}\sum_{i=0}^{N-1}\frac{d}{dt}\left(x_i(t)-y_i(t)\right)^{2h}\\
        & \ = \frac{2h}{N}\sum_{i=0}^{N-1}(x_i(t)-y_i(t))^{2h-1}(\dot{x}_i(t)-\dot{y}_i(t))\\
        & \ =  -2h\sum_{i=0}^{N-1}(x_i(t)-y_i(t))^{2h-1}\left(\left(R_{i+1}(t)^m-R_{i}(t)^m\right)-\left(S_{i+1}(t)^m-S_{i}(t)^m\right)\right)
    \end{align*}
    where we are using the notation $R_i(t)=\rmap_i[X(t)]$ and $S_i(t)=\rmap_i[Y(t)]$,
    and adopt the convention \eqref{eq:Rconvent}. % $R_{0}=S_{0}=0$.
    We now use summation by parts and get
    \begin{align*}
        & \frac{d}{dt}d_h(X(t),Y(t))^{2h}\\
        & \ = -2h\sum_{i=0}^{N-1}(x_i(t)-y_i(t))^{2h-1}\left(R_{i+1}(t)^m-S_{i+1}(t)^m\right) + 2h\sum_{i=0}^{N-1}(x_i(t)-y_i(t))^{2h-1}\left(R_{i}(t)^m-S_{i}(t)^m\right)\\
        & \ = -2h\sum_{i=0}^{N-1}\left[(x_{i-1}(t)-y_{i-1}(t))^{2h-1}-(x_{i}(t)-y_{i}(t))^{2h-1}\right]\left(R_i(t)^m-S_i(t)^m\right)\,.
    \end{align*}
    We claim that the last right-hand side above is non positive. Assume first $R_i(t)\leq S_i(t)$. This implies
    \[x_{i}(t)-x_{i-1}(t)\geq y_{i}(t)-y_{i-1}(t)\]
    which in turns implies
    \[x_{i}(t)-y_{i}(t)\geq x_{i-1}(t)-y_{i-1}(t)\]
    and therefore
    \[-\left[(x_{i-1}(t)-y_{i-1}(t))^{2h-1}-(x_{i}(t)-y_{i}(t))^{2h-1}\right]\left(R_i(t)^m-S_i(t)^m\right)\geq 0\,.\]
    Similarly, if $R_i(t)\geq S_i(t)$ then
    \[x_{i}(t)-x_{i-1}(t)\leq y_{i}(t)-y_{i-1}(t)\]
    and the claim follows. This proves the statement for $h$ finite. The estimate for $d_\infty$ follows by a standard argument in which $h\rightarrow+\infty$. We omit the details.
\end{proof}
%
% \begin{theorem}\label{thm:support}
% Let $\overline{\rho}\in \mathcal{P}(\mathbb{R})\cap L^1(\mathbb{R})$. Then, there exist nonnegative constants $A, B$ independent of $N$ such that
% \begin{equation}\label{eq:Lest}
%     \mathcal{L}[X(t)]\leq \mathcal{L}[X(0)]\left(A + Bt^{\frac{1}{m+1}}\right)\qquad \hbox{for all $t\geq 0$}\,.
% \end{equation}
% \end{theorem}
%
We are now ready to prove Theorem \ref{thm:support_main}.
\begin{proof}[Proof of Theorem \ref{thm:support_main}]
  The idea for the remaining part of the proof is to construct for a given $N\ge4$ an $N$-particle measure
  --- defined in terms of its point configuration $\bar Y^{(N)}$ and density mapping $\bar S^{(N)}$ ---
  of arbitrarily short support for which
  the corresponding expression $\bar Z$ appearing in \eqref{eq:estimate_support_1}
  has a universal lower bound $-\Lambda$, i.e.,
  \begin{align}
    \label{eq:etabound2}
    S^{(N)}_k\Delta_k\big[\big(\bar S^{(N)}\big)^m\big]\ge-\Lambda
    \quad\text{for $k=1,\ldots,N$}.
  \end{align}
  For that construction, consider the function $f:(0,1)\to\R$ given by
  \begin{align*}
    f(z) = \big(z(1-z)\big)^{-1/m},
  \end{align*}
  which --- recalling $m>1$ --- has finite integral,
  \begin{align*}
    \ell:= \int_0^1 f(z)\dd z.
  \end{align*}
  Depending on two parameters $\alpha\in\R$ and $\beta>0$ at our disposal,
  define $\bar S^{(N)}$ and $\bar Y^{(N)}$ by
  \begin{align*}
    \bar S^{(N)}_k &:= \frac1{\beta f\left(\frac{k-1/2}{N}\right)} \quad \text{for $k=1,2,\ldots,N$}, \\
    \bar y^{(N)}_j &:= \alpha + \frac1N \sum_{k=1}^j\frac1{\bar S^{(N)}_k} \quad \text{for $j=0,1,\ldots,N$};
  \end{align*}
  we further set $\bar S^{(N)}_0:=0$ and $\bar S^{(N)}_{N+1}:=0$.
  By convexity of $f$, the support length $\mathcal{L}[\bar Y^{(N)}]=\bar y^{(N)}_N-\bar y^{(N)}_0$
  can be estimated as follows:
  \begin{align}
    \label{eq:Yisshort}
    \mathcal{L}[\bar Y^{(N)}]
    = \frac1N\sum_{k=1}^N\frac1{\bar S^{(N)}_k}
    = \frac \beta N \sum_{k=1}^Nf\left(\frac{k-1/2}N\right)
    \le \beta \sum_{k=1}^N \int_{(k-1)/N}^{k/N}f(z)\dd z
    = \beta \ell.
  \end{align}
  A direct computation --- using that $f(z)^{-m}=z(1-z)/\beta^m$ --- yields that
  \begin{align*}
    \Delta_k\big[\big(\bar S^{(N)}\big)^m\big] = -\frac2{\beta^m}
    \quad \text{for $k=2,3,\ldots,N-1$},
  \end{align*}
  as well as
  \begin{align*}
    \Delta_1\big[\big(\bar S^{(N)}\big)^m\big]
    = \Delta_N\big[\big(\bar S^{(N)}\big)^m\big]
    = \frac{2N-7}{2\beta^m} >0 ,
  \end{align*}
  since we assume $N\ge4$.
  Thus
  \begin{align*}
    \min_{k=1,\ldots,N}S^{(N)}_k\Delta_k\big[\big(\bar S^{(N)}\big)^m\big]
    \ge -\frac2{\beta^m}\max_{k=2,\ldots,N-1}S^{(N)}_k
    \ge -\frac2{\beta^{m+1}}\max_{z\in(0,1)}\frac1{f(z)}
    \ge -\frac{2}{4^{1/m}\beta^{m+1}},
  \end{align*}
  which imples that \eqref{eq:etabound2} holds with
  \begin{align*}
    \Lambda =  \frac1{4^{1/m-1/2}\beta^{m+1}}.
  \end{align*}
  Now let $Y^{(N)}(t)$ be the solution to the evolution equation \eqref{eq:particles1} with initial condition $\bar Y^{(N)}$.
  Estimate \eqref{eq:estimate_support_1} on the support length
  in combination with the elementary inequality \eqref{eq:very_elementary} yields
  \begin{align*}
    \mathcal L\big[Y^{(N)}(t)\big]
    &\le \mathcal L\big[\bar Y^{(N)}\big] \big(1+(m+1)\Lambda t\big)^{1/(m+1)} \\
    &\le \beta\ell \Big(1+ \big[(m+1)\Lambda t\big]^{1/(m+1)}\Big)
    \le \beta\ell + \frac{\ell}{4^{(1/m-1/2)/(m+1)}}\big[(m+1)t\big]^{1/(m+1)}.
  \end{align*}
  In short, with $B:=4^{-(1/m-1/2)/(m+1)}(m+1)^{1/(m+1)}\ell$, we have
  \begin{align}
    \label{eq:loooong}
    \mathcal L\big[Y^{(N)}(t)\big] \le \beta\ell + Bt^{1/(m+1)}.
  \end{align}
  To finish the proof, choose $\alpha:=\frac12(\bar x^{(N)}_0+\bar x^{(N)}_N)$ as the center of $\bar X^{(N)}$'s support,
  and let $\beta>0$ be sufficiently small such that $\bar Y^{(N)}$'s support length
  is smaller than half of that of $\bar X^{(N)}$,
  i.e., $\beta\ell<\frac12\big(\bar x^{(N)}_N-\bar x^{(N)}_0\big)$.
  This choice is made such that
  \begin{align*}
    d_\infty\big(\bar X^{(N)},\bar Y^{(N)}\big)
    = \max_k \big|\bar x^{(N)}_k-\bar y^{(N)}_k\big|
    \le \max_k \big|\bar x^{(N)}_k-\alpha| + \max_k\big|\bar y^{(N)}_k-\alpha\big|
    \le \frac12\mathcal L[\bar X^{(N)}]+\beta\ell.
  \end{align*}
  By contractivity in $d_\infty$, see Proposition \ref{prop:contraction}, it follows that at any time $t>0$,
  \begin{align*}
    \big|x^{(N)}_0(t)-y^{(N)}_0(t)\big| + \big|x^{(N)}_N(t)-y^{(N)}_N(t)\big|
    &\le 2d_\infty\big(X^{(N)}(t),Y^{(N)}(t)\big) \\
    &\le 2d_\infty\big(\bar X^{(N)},\bar Y^{(N)}\big)
    \le \mathcal L[\bar X^{(N)}]+2\beta\ell.
  \end{align*}
  In combination with \eqref{eq:loooong}, we thus obtain
  \begin{align*}
    \mathcal L[X^{(N)}(t)]
    &\le \big|x^{(N)}_0(t)-y^{(N)}_0(t)\big| + \big|x^{(N)}_N(t)-y^{(N)}_N(t)\big| + \mathcal L\big[Y^{(N)}(t)\big] \\
    &\le  \mathcal L[\bar X^{(N)}]+2\beta\ell + \beta\ell + Bt^{1/(m+1)} \\
    &= \mathcal L[\bar X^{(N)}] + Bt^{1/(m+1)} + 3\beta\ell.
  \end{align*}
  The construction above works for any sufficiently small choice of $\beta>0$.
  This implies the claimed estimate \eqref{eq:Lest}.
\end{proof}

% \textcolor{red}{Marco: the same procedure may be applied to prove a uniform estimate on the support for any particle scheme for an equation of the form
% \[\rho_t = (\rho^m)_{xx}+(\rho V[\rho])_x\]
% where $V[\rho]$ is a (possibly nonlocal) velocity field which grows at most linearly in $x$ (uniformly in $\rho$).
% }

% From the above proposition and from the ODE for $R_k$ we get the inequality
% \[\dot{R}_k\geq -\frac{R_k}{(m+1)t}\,,\]
% from which we get
% \[
% \log\left[\frac{R(t)}{R_k(0)}\right]\geq \log t^{-\frac{1}{m+1}}
% \]
% and consequently
% \[R_k(t)\geq R_k(0) t^{-\frac{1}{m+1}}\,.\]
% Hence, the quantity $d_k(t)$ satisfies
% \[
% d_k(t)\leq d_k(0) t^{\frac{1}{m+1}}\,.
% \]
% Assuming the above holds for all $k=1,\ldots,N-1$ where $N$ is the total number of particles, we get
% \[
% x_N(t)-x_1(t)=h\sum_{k=1}^{N-1}d_k(t)\leq t^{\frac{1}{m+1}} h\sum_{k=1}^{N-1}d_0(t) = (x_N(0)-x_1(t))t^{\frac{1}{m+1}}\,.
% \]
% The above is a uniform estimate on the propagation of the support, which once again fits the continuum theory. Conjecture: this is enough to prove the convergence of the scheme near vacuum.

%%%%%%%%%%%%%%%%%%%%%%%%%%%%%%%%%%%%%%%%%%%%%%%%%%%%%%%
%%%%%%%%%%%%%%%%%%%%%%%%%%%%%%%%%%%%%%%%%%%%%%%%%%%%%%%
\section{Uniform $L^\infty$ decay}\label{sec:decay}
%%%%%%%%%%%%%%%%%%%%%%%%%%%%%%%%%%%%%%%%%%%%%%%%%%%%%%%
%%%%%%%%%%%%%%%%%%%%%%%%%%%%%%%%%%%%%%%%%%%%%%%%%%%%%%%
%
% The goal of this section is to obtain an $L^\infty$-estimate on discrete solutions that parallels \eqref{eq:PMELinfty}.
% With the approach that we take, we are not able to replicate the optimal constant $C_m$,
% but we derive that analogous estimate with another universal (and in particular discretization-independent) constant.
% %
% \begin{proposition}
%   \label{prp:Linfty}
%   For every solution $R$ to \eqref{eq:particles_R_2} and at every $t>0$:
%   \begin{align}
%     \label{eq:Linfty}
%     \max_k R_k(t) \le \left(\frac{m+1}{16m\,t}\right)^{1/(m+1)}.
%   \end{align}
% \end{proposition}
% %
In this section we prove the uniform $L^\infty$-decay estimate of Theorem \ref{thm:Linfty_main}.
For the reader's convenience, we first discuss the derivation of \eqref{eq:Linfty} in the continuum setting and motivate our choice of the strategy of the proof in the discrete setting.

The classical proof of the sharp form \eqref{eq:Linfty} of the decay estimate at a given time $t>0$
is to maximize the supremum of the spatial profile $x\mapsto U(x):=\rho(t;x)$ under the constraint of given mass,
subject to the estimate \eqref{eq:AB}.
In other words, due to the translation invariance one may assume without restriction that the maximum of $U$ is attained at $x=0$. Then,
one seeks to find $U(0)$
subject to
\begin{align*}
  (U^{m-1})'' = -\frac{m-1}{m(m+1)t}
  \quad \text{on its support $B$, and}\quad
  \int_B U\dd x = M.
\end{align*}
It is easily seen that an optimizing profile (actually: the unique one up to translations)
is indeed given by $U(x)=\hat\rho(t;x)$, the corresponding profile of the self-similar solution from \eqref{eq:PMEss}.
Since $\hat\rho$ is a solution, the sharpness of the estimate follows automatically.

Translating this strategy to the discrete setting is not straightforward,
since dealing with the discrete counterpart of the self-similar profile is not immediate.
Instead, we shall provide a different proof, which leads to the same decay estimate, but with a sub-optimal constant.
The advantage of this proof is that it is sufficiently robust to be directly rephrased for the Lagrangian discretization. We sketch below the proof of the continuum case for convenience.

Let $\rho$ be a compactly supported unit mass solution to \eqref{eq:PME}.
First observe that, by the estimate \eqref{eq:AB} and integration by parts,
\begin{equation}
  \label{eq:linfcalc1}
  \begin{split}
    \frac1{m(m+1)t}
    &= \frac1{m(m+1)t}\intR\rho\dd x \\
    & \ge \intR\left(-\frac{\rho^{m-1}}{m-1}\right)_{xx}\rho \dd x
    = \intR\left(\frac{\rho^{m-1}}{m-1}\right)_{x}\rho_x\dd x
    = \frac4{m^2}\intR\big(\rho^{m/2}\big)_x^2\dd x.
  \end{split}
\end{equation}
With that at hand, we find that for any $\bar x\in\R$,
\begin{equation}
  \label{eq:linfcalc2}
  \begin{split}
    \frac{\rho(\bar x)^{\frac{m+1}{2}}}{\frac{m+1}{2}}
    &= \frac12\left(\int_{-\infty}^{\bar x}\rho^{(m-1)/2}\rho_x\dd x - \int_{\bar x}^\infty \rho^{(m-1)/2}\rho_x\dd x\right) \\
    & = \frac1m\left(\int_{-\infty}^{\bar x}\rho^{1/2}\big(\rho^{m/2}\big)_x\dd x - \int_{\bar x}^{\infty}\rho^{1/2}\big(\rho^{m/2}\big)_x\dd x\right) \\
    &\le \frac1m \left(\intR \rho\dd x\right)^{1/2}\left(\intR\big(\rho^{m/2}\big)_x^2\dd x\right)^{1/2}.
  \end{split}
\end{equation}
Combining the two estimates \eqref{eq:linfcalc1} and \eqref{eq:linfcalc2}, we obtain
\begin{align*}
  \sup \rho \le \left[\frac{m+1}{2m}\left(\frac{m}{4(m+1)t}\right)^{1/2}\right]^{2/(m+1)}
  = \left(\frac{m+1}{16m\,t}\right)^{1/(m+1)}.
\end{align*}
We now adopt a discrete version of the above strategy to prove Theorem \ref{thm:Linfty_main}.
\begin{proof}[Proof of Theorem \ref{thm:Linfty_main}]
  Since by \eqref{eq:dAB2}, the upper bound
  \begin{align*}
    \frac1{(m+1)t} \ge -R_k\Delta_k(R^m)
  \end{align*}
  holds at every $k=1,2,\ldots,N$,
  the same upper bound is true also for the average of the right-hand side.
  In combination with the summation by parts rule \eqref{eq:byparts} and the elementary inequality \eqref{eq:ineq1},
  we thus obtain
  \begin{equation}
    \label{eq:Rtom}
    \begin{split}
      \frac1{(m+1)t}
      &\ge -\frac1N\sum_{k=1}^N R_k\Delta_k(R^m) \\
      &\qquad = \frac1N\sum_{k=0}^N\dff^+_kR\,\dff^+_k(R^m)
      \ge \frac{4m}{(m+1)^2}\frac 1N\sum_{k=0}^N \big(\dff^+_kR^{\frac{m+1}{2}}\big)^2.
    \end{split}
  \end{equation}
  Now, on the other hand, we have for every index $k$:
  \begin{align*}
    R_k^{\frac{m+1}{2}}
    &= \frac12 \left(
      \sum_{j=1}^{k}\big(R_j^{\frac{m+1}{2}}-R_{j-1}^{\frac{m+1}{2}}\big)+
      \sum_{j=k}^{N}\big(R_j^{\frac{m+1}{2}}-R_{j+1}^{\frac{m+1}{2}}\big)\right) \\
    &\le \frac1{2N}\sum_{j=0}^{N}\big|\dff^+_jR^{\frac{m+1}{2}}\big|
      \le \frac12\left(\frac1N\sum_{j=0}^N\big(\dff^+_jR^{\frac{m+1}{2}}\big)^2\right)^{1/2},
  \end{align*}
  where we have used again Jensen's inequality for sums in the last estimate.
  Bounding the right-hand side above further in terms using \eqref{eq:Rtom} yields the result.
\end{proof}

%%%%%%%%%%%%%%%%%%%%%%%%%%%%%%%%%%%%%%%%%%%%%%%%%%%%%%%
%%%%%%%%%%%%%%%%%%%%%%%%%%%%%%%%%%%%%%%%%%%%%%%%%%%%%%%
\section{Convergence of the scheme}\label{sec:convergence}
%%%%%%%%%%%%%%%%%%%%%%%%%%%%%%%%%%%%%%%%%%%%%%%%%%%%%%%
%%%%%%%%%%%%%%%%%%%%%%%%%%%%%%%%%%%%%%%%%%%%%%%%%%%%%%%
%

This section is devoted to the proof of Theorem \ref{thm:convergence_main}.
The proof has two main elements:
the first one is the (strong) compactness of the sequence $\left(\rho^{(N)}\right)_{N\geq 2}$ defined in \eqref{eq:approximate_density} in $L^1_{\mathrm{loc}}([0,+\infty)\times\R)$ and $L^m_{\mathrm{loc}}([0,+\infty)\times\R))$, with respective limit $\rho$;
the second one is the identification of $\rho$ as a solution to the \eqref{eq:PME} in the sense of \eqref{eq:weakPME}.

%%%%%%%%%%%%%%%%%%%%%%%%%%%%%%%%%%%%%%%%%%%%%%%%%%%%%%%
\subsection{Compactness in $L^1$ and $L^m$}
%%%%%%%%%%%%%%%%%%%%%%%%%%%%%%%%%%%%%%%%%%%%%%%%%%%%%%%
%
To establish strong compactness of $(\rho^{(N)})_{N\ge2}$
we apply the version of the Aubin-Lions theorem recalled in Proposition \ref{prp:savare} in Appendix \ref{sec:aubin}.
% \begin{proposition}[\cite[Thoerem 2]{rossi_savare}]
%     \label{prp:savare}
%     Let $T>0$. On a non-empty closed convex subset $U$ of a Banach space $B$, let be given:
%     \begin{itemize}
%         \item a lower semi-continuous functional $\fnc:U\to[0,+\infty]$ with relatively compact sublevel sets;
%         \item a lower semi-continuous map $g:U\times U\to[0,\infty]$ such that $g(u,v)=0$ implies $u=v$.
%     \end{itemize}
%     Assume that $(u_n)$ is a sequence of maps $u_n:(0,T)\to X$ from a time interval $(0,T)$ into $U$, satisfying the following:
%     \begin{enumerate}
%         \item the sequence is tight with respect to $\fnc$,
%         \begin{align}
%             \label{eq:stight}
%             \sup_n \int_0^T \fnc\big(u_n(t)\big)\dd t<\infty.
%         \end{align}
%         \item the $u_n$ are weakly equi-continuous with respect to $g$,
%         \begin{align}
%             \label{eq:sequicont}
%             \lim_{\tau\searrow0}\sup_n\int_0^{T-\tau}g\big(u_n(t+\tau),u_n(t)\big)\dd t=0.
%         \end{align}
%     \end{enumerate}
%     Then there is a subsequence $(u_{n_k})$ that converges in measure with respect to $t\in(0,T)$ in the $B$-norm to a limit $\bar u$.
% \end{proposition}
%
We are going to apply Proposition \ref{prp:savare} with the Banach space $B=L^1(\R)$ and the closed convex subset
\begin{align}
    & U = \left\{ \rho\in L^1(\R)\,\middle|\,\rho\ge0,\ \int_\R\rho\dd x=1\right\}.\label{eq:U}
\end{align}
Let us further single out the subsets $U^N\subset X$ defined by
\[
U^N=\mathrm{span}\left\{\rec^N[X]\,:\,\, X=(x_0,\ldots,x_N)\in \mathcal{K}^N\right\}\,.
\]
Our choices for $\fnc$ and $g$ in Proposition \ref{prp:savare} are the following:
\begin{itemize}
    \item $\fnc$ is the sum of the total variation functional $\tvn{\rho^{\frac{m+1}{2}}}$ and of its first moment of $\rho$, that is
    \begin{align}
    & \fnc[\rho]=\tvn{\rho^{\frac{m+1}{2}}} + M_1[\rho]\,,\label{eq:functional}\\
        & \tvn{\rho^{\frac{m+1}{2}}} = \sup\left\{\intR \partial_x\varphi\,\rho^{\frac{m+1}{2}}\dd x\,\middle|\,\varphi\in C^1(\R),\, \sup|\varphi|\le 1\right\}\,,\label{eq:tv}\\
        & M_1[\rho]=\int_\R|x|\rho(x) dx\,.\label{eq:M1}
    \end{align}
    The lower semi-continuity of $\fnc$ with respect to $L^1(\R)$-convergence follows from Lemma \ref{lem:appendix_TV} in Appendix \ref{sec:aubin} for the total variation part and from Fatou's lemma for the first moment part. Helly's Theorem (see e.g. \cite[Theorem 2.3]{bressan_book}) implies for a sequence $\rho_n$ that if $\tvn{\rho^{\frac{m+1}{2}}_n}$ is uniformly bounded then there exists a subsequence $\rho^{\frac{m+1}{2}}_{n_k}$ converging strongly in $L^1(\Omega)$ for $\Omega\subset \R$ a closed and bounded set. Fisher-Riesz theorem (see \cite[Theorem 4.9]{brezis_FA}) and Lebesgue's dominated convergence theorem then implies, for a further subsequence still denoted by $\rho_{n_k}$, almost everywhere convergence and $L^{\frac{m+1}{2}}(\Omega)$-convergence.
    H\"older's inequality then implies $\rho_{n_k}$ converges in $L^1(\Omega)$. The additional uniform bound of $M_1$ implies compactness of the sublevels of $\fnc$ in $L^1(\R)$, see Lemma \ref{lem:appendix_moment} in Appendix \ref{sec:aubin}. For the application at hand, we recall that for $\rho\in U^N$, the total variation part simplifies to
    \begin{align}
        \label{eq:tvsimple}
        \tvn{\rho^{\frac{m+1}{2}}} = \sum_{n=0}^N \left|\rho_{n+1}^{\frac{m+1}{2}}-\rho_n^{\frac{m+1}{2}}\right|
        \quad \text{with} \quad \rho=\rec^N[X]\,,\,\,X=(x_0,\ldots,x_n)\,,\,\,
        \rho_n = \frac{1/N}{x_n-x_{n-1}}.
    \end{align}
    \item $g$ is the $L^1$-Wasserstein distance $d_1$ recalled in \eqref{eq:wasserstein}, namely
    \begin{equation}\label{eq:g}
g(\rho,\eta)=d_1(\rho,\eta)=\int_0^1\left|\mathcal{T}[\rho](z)-\mathcal{T}[\eta](z)\right|dz\,,
    \end{equation}
    with $\mathcal{T}:\mathcal{P}(\R)\rightarrow \mathcal{K}$ defined in \eqref{eq:mapT1}-\eqref{eq:mapT2}.
    % also known as bounded Lipschitz distance: for $\rho, \rho'\in X$ with corresponding inverse distribution functions $R,R':[0,1]\to\R$, define
    % \begin{align*}
    %     g(\rho,\rho') = \int_0^1 |R'(z)-R(z)|\dd z.
    % \end{align*}
    For $\rho=\rec^N[X],\eta=\rec^N[Y]\in U^N$ with corresponding vectors $X=(x_0,\ldots,x_N), Y=(y_0,\ldots, y_N)\in \mathcal{K}^N$, the definition of $d_1$ simplifies since the inverse distribution functions $\mathcal{T}[\rho], \mathcal{T}[\eta]$ are piecewise linear. Specifically, $\mathcal{T}[\rho]- \mathcal{T}[\eta]$ is the affine interpolation of $y_{n-1}-x_{n-1}$ and $y_n-x_n$ over the interval $((n-1)/N,n/N)$. From there, one easily derives the following upper bound:
    \begin{align}
        \label{eq:gsimple}
        d_1(\rho,\eta) \le
        W_1^N(\rho,\eta) := \frac{1}{N}\sum_{n=0}^N |y_n-x_n|.
    \end{align}
\end{itemize}
\begin{lemma}
  \label{lem:applysavare}
  With the choices $U\subset B$ in \eqref{eq:U}, $\fnc$ in \eqref{eq:functional}-\eqref{eq:M1}, $g$ in \eqref{eq:g}, and for any $T>0$,
  the sequence $(\rho^{(N)})_{N\ge2}$ defined by
  \[\rho^{(N)}(x,t)=\rec^N[X^{(N)}(t)](x)\,,\qquad \hbox{with $X^{(N)}(t)=(x^{(N)}_0(t),\ldots,x^{(N)}_N(t))$ solving \eqref{eq:particles1},}\]
  satisfy the hypotheses \eqref{eq:stight} and \eqref{eq:sequicont} of Proposition \ref{prp:savare} in Appendix \ref{sec:aubin}.
  Consequently, there exist a subsequence $\rho^{(N_k)}$ and a limit $\rho\in L^1([0,T]\times\R)$
  with $\rho^{(N_k)}\to\rho$ in $L^1([0,T]\times\R)$.
\end{lemma}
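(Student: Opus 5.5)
We verify the two hypotheses of Proposition \ref{prp:savare} separately: tightness \eqref{eq:stight} with respect to $\fnc$, and weak equicontinuity \eqref{eq:sequicont} with respect to $g=d_1$. The conclusion then follows from Proposition \ref{prp:savare}. It gives convergence in measure in $t$ with values in $L^1(\R)$. Since $\|\rho^{(N)}(t)\|_{L^1}=1$ for every $t$, dominated convergence upgrades this to convergence in $L^1([0,T]\times\R)$.

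\emph{Tightness.} The total variation part should come from the Aronson--Bénilan estimate, exactly as in the proof of Theorem \ref{thm:Linfty_main}. By \eqref{eq:Rtom},
\[\frac1N\sum_{k=0}^N\big(\dff^+_k R^{\frac{m+1}{2}}\big)^2\le \frac{(m+1)}{4m\,t}\,.\]
Here we use \eqref{eq:dAB2} in the form $Z_k\ge-1/((m+1)t)$, which holds since $|\overline Z|^{-1}>0$. By \eqref{eq:tvsimple} and Cauchy--Schwarz,
\[\tvn{(\rho^{(N)})^{\frac{m+1}{2}}}=\frac1N\sum_{k=0}^N|\dff^+_kR^{\frac{m+1}{2}}|\le \Big(\frac{N+1}{N}\Big)^{1/2}\Big(\frac1N\sum_k(\dff^+_kR^{\frac{m+1}{2}})^2\Big)^{1/2}\le C\,t^{-1/2}.\]
This bound is integrable on $(0,T)$ and uniform in $N$.

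For the first moment we bound $M_1[\rho^{(N)}(t)]\le \max(|x_0(t)|,|x_N(t)|)$. The motion of $x_0$ is monotone decreasing and that of $x_N$ is monotone increasing. Hence $x_0(t)\ge x_0(0)-(x_N(t)-x_0(t))$, with the analogous bound for $x_N$. It therefore suffices to combine Theorem \ref{thm:support_main} with a uniform bound on $|x_0(0)|$ and $|x_N(0)|$. Theorem \ref{thm:support_main} gives $\mathcal L^N[X(t)]\le \mathcal L^N[\overline X^{(N)}]+BT^{1/(m+1)}$. The initial data are controlled as follows: assumption \eqref{eq:sampling_support_N} gives $\mathcal L^N[\overline X^{(N)}]=o(\sqrt N)$, and the approximation property \eqref{eq:initial_sampling} pins the particles near the bulk of $\overline\rho$.

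This step is the main obstacle. A naive bound grows like $\sqrt N$, whereas we need an $N$-independent bound. I would try the following instead. Mass is transported along particles, so a contraction-type estimate on the first moment may be available: compute $\frac{d}{dt}\frac1N\sum_i|x_i|$ by summation by parts. This gives $\frac1N\sum_i \mathrm{sign}$-differences times $R^m$, which is bounded by $2R_{\max}^m\le C t^{-m/(m+1)}$ via Theorem \ref{thm:Linfty_main}. That bound is integrable. The remaining ingredient is that the initial moment $\frac1N\sum|\overline x_i|$ is uniformly bounded. This must be deduced from the sampling assumptions, or from a restriction to bounded-moment data combined with the $L^1$-contraction and density argument.

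\emph{Equicontinuity.} By \eqref{eq:gsimple} and \eqref{eq:particles1_differences},
\[d_1(\rho^{(N)}(t+\tau),\rho^{(N)}(t))\le\frac1N\sum_{n=0}^N\int_t^{t+\tau}|\dff^+_nR^m|\dd s\,.\]
We write $R^m=(R^{\frac{m+1}{2}})^{\frac{2m}{m+1}}$, so that
\[|\dff^+R^m|\le C R_{\max}^{\frac{m-1}{2}}|\dff^+R^{\frac{m+1}{2}}|\,.\]
The tightness bound then controls $\frac1N\sum|\dff^+R^{\frac{m+1}{2}}|\le Cs^{-1/2}$, and Theorem \ref{thm:Linfty_main} gives $R_{\max}^{\frac{m-1}{2}}\le C s^{-\frac{m-1}{2(m+1)}}$. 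The integrand is therefore at most $Cs^{-\frac{m}{m+1}}$, which is integrable near $0$. Consequently
\[\int_0^{T-\tau}d_1(\rho^{(N)}(t+\tau),\rho^{(N)}(t))\,dt\le C\,T\,\tau^{1/(m+1)}\to0\]
uniformly in $N$, which is \eqref{eq:sequicont}.
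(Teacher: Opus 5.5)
\paragraph{What is fine, and how it compares.} Your bound on $\tvn{(\rho^{(N)})^{\frac{m+1}{2}}}$ is the paper's argument. Your equicontinuity step is correct but organized differently from the paper's. The paper applies Cauchy--Schwarz and bounds the discrete kinetic energy $\frac1N\sum_j(\dff^+_jR^m)^2=\frac1N\sum_kR_k^m(-\Delta_kR^m)\le\max_kR_k^{m-1}\max_k(-R_k\Delta_kR^m)$ through a second summation by parts. You instead split $\dff^+R^m$ by the mean value theorem and reuse the $\ell^1$ bound on $\dff^+R^{\frac{m+1}{2}}$. Both rest on exactly the same inputs, namely the discrete Aronson--Bénilan estimate and Theorem \ref{thm:Linfty_main}. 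Yours has the small advantage of giving directly the time-integrable velocity bound $Cs^{-m/(m+1)}$, hence a modulus $C\tau^{1/(m+1)}$ uniform in $t$.

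\paragraph{The gap: the first-moment half of \eqref{eq:stight}.} You leave this open. The paper's own proof has the same hole: it bounds only the total-variation part of $\fnc$ and never mentions $M_1$. Neither of your suggested routes works, and with $\fnc=\tvn{\cdot}+M_1$ the hypothesis \eqref{eq:stight} is in fact false in general. Your own summation by parts gives
\[
\frac{\mathrm{d}}{\mathrm{d}t}\frac1N\sum_{i=0}^N|x_i|=\sum_{i=1}^N\big(\mathrm{sign}\,x_i-\mathrm{sign}\,x_{i-1}\big)R_i^m\ge0,
\]
so the discrete first moment never decreases. We also have $\frac1N\sum_{i=0}^N|x_i(t)|\le 2M_1[\rho^{(N)}(t)]+\mathcal{L}^N[X^{(N)}(t)]/N$, and $\mathcal{L}^N/N\to0$ by Theorem \ref{thm:support_main} and \eqref{eq:sampling_support_N}. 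Hence $\sup_N\int_0^TM_1[\rho^{(N)}]\,\mathrm{d}t<\infty$ forces $\sup_N\frac1N\sum_i|\bar x_i|<\infty$. This fails in two situations:
\begin{itemize}
\item whenever $M_1[\overline\rho]=\infty$, which \eqref{eq:In} allows; test with the bounded functions $\min\{|x|,R\}$;
\item even for compactly supported $\overline\rho$. Condition \eqref{eq:initial_sampling} is only vague convergence, so a sampling may place a fraction $1/\log N$ of the particles near $N^{2/5}$ without violating \eqref{eq:initial_sampling} or \eqref{eq:sampling_support_N}.
\end{itemize}
The density argument you hint at has no tool behind it either: Proposition \ref{prop:contraction} is a Lagrangian contraction, not an $L^1$ contraction for densities.

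\paragraph{How to repair it.} Replace $M_1$ by a tightness functional adapted to the sequence, using your computation with bounded test functions. For Lipschitz $\psi$ with $\psi'$ of bounded variation,
\[
\Big|\frac{\mathrm{d}}{\mathrm{d}t}\frac1N\sum_{i=0}^N\psi(x_i)\Big|=\Big|\sum_{i=1}^N\big(\psi'(x_i)-\psi'(x_{i-1})\big)R_i^m\Big|\le \mathrm{TV}(\psi')\max_kR_k^m\le C\,\mathrm{TV}(\psi')\,t^{-\frac{m}{m+1}}.
\]
Take $\psi_R(x)=\min\{1,(|x|-R)_+/R\}$, for which $\mathrm{TV}(\psi_R')=4/R$. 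The initial values $\frac1N\sum_i\psi_R(\bar x_i)$ are small uniformly in $N$ for large $R$. Indeed, $1-\psi_R\in C_c(\R)$, and vague convergence of probability measures to the probability measure $\overline\rho$ implies uniform tightness. Hence
\[
\sup_N\sup_{t\le T}\int_{\{|x|>2R\}}\rho^{(N)}(t)\,\mathrm{d}x\to0 \quad\text{as } R\to\infty .
\]
Choose $R_k\uparrow\infty$ with this supremum at most $2^{-k}$, and set $\Psi:=\sum_k\mathbf{1}_{\{|x|>2R_k\}}$. The functional $\tvn{\rho^{\frac{m+1}{2}}}+\int\Psi\rho\,\mathrm{d}x$ then has the required properties:
\begin{itemize}
\item it is lower semicontinuous, since $\Psi$ is, by Fatou's lemma;
\item it has compact sublevels, by the proof of Lemma \ref{lem:appendix_moment}, since $\Psi\ge k$ on $\{|x|>2R_k\}$;
\item it satisfies \eqref{eq:stight}.
\end{itemize}
With this change the rest of your proof goes through.
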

\begin{proof}
  Fix some time horizon $T>0$. We verify the two hypotheses \eqref{eq:stight} and \eqref{eq:sequicont} for $\fnc$ and $g$ respectively.\\
  \noindent\textbf{Proof of the tightness condition \eqref{eq:stight}.}\,
  By means of the key estimate \eqref{eq:Rtom} above,
  we obtain, recalling the convention \eqref{eq:Rconvent},
  \begin{align*}
    & \tvn{(\rho^{(N)})^{\frac{m+1}{2}}}
    =\sum_{j=1}^{N+1} \big|R_j^{\frac{m+1}{2}}-R_{j-1}^{\frac{m+1}{2}}\big| = \frac{1}{N}\sum_{j=0}^{N+1}\big|\dff^+_jR^{\frac{m+1}{2}}\big]\big| \\
    &\le \left(\frac1N\sum_{j=0}^N\big(\dff^+_jR^{\frac{m+1}{2}}\big)^2\right)^{1/2}
      \le \left(\frac{m+1}{4mt}\right)^{1/2},
  \end{align*}
  Integrating in time, it follows that
  \begin{align*}
    \int_0^T\tvn{(\rho^{(N)})^{\frac{m+1}{2}}(t)}\dd t
    \le \int_0^T \left(\frac{m+1}{4m\,t}\right)^{1/2}\dd t
    \le \left(\frac{m+1}{m}\right)^{1/2}
    T^{1/2}.
  \end{align*}
  \\
\noindent\textbf{Proof of the weak equi-continuity in time \eqref{eq:sequicont}.}\,
  Fix some $s>t>0$. By the upper bound \eqref{eq:gsimple} on $g$,
  \begin{align*}
    & g\big(\rho^{(N)}(t),\rho^{(N)}(s)\big)
    \le W_1^{N}\big(\rho^{(N)}(t),\rho^{(N)}(s)\big)
    = \frac1N\sum_{j=0}^N\big|x_j(t)-x_j(s)\big|\\
    & \
    \le \int_s^t\frac1N\sum_{j=1}^N \big|\dot x_j(r)\big|\dd r
    \le \left[\int_s^t\left(\frac1N\sum_{j=1}^N \big|\dot x_j(r)\big|^2\right)\dd r\right]^{1/2}  |t-s|^{1/2}.
  \end{align*}
  Now combine the discrete Lagrangian AB-estimate \eqref{eq:dAB2} with the $L^\infty$-bound \eqref{eq:Linfty} as follows:
  \begin{align*}
   & \left(\frac{m+1}{16mt}\right)^{(m-1)/(m+1)}\cdot\frac{m-1}{m(m+1)t}
    \ge \max_k R_k^{m-1}\,\max_k\big(-R_k\Delta_k R^m\big) \\
    &\ge \frac1N\sum_{k=1}^N R_k^{m-1}\,\big(-R_k\Delta_k R^m\big) = \frac1N \sum_{k=1}^N R_k^m\big(-\Delta_k R^m\big)
      = \frac1N \sum_{j=1}^N \big[\dff^+_j(R^m)\big]^2.
  \end{align*}
  Recalling from \eqref{eq:particles1_differences} that
  \begin{align*}
    \dot x_j = -\dff^+_jR^m,
  \end{align*}
  we thus obtain that
  \begin{align*}
    W_1^{N}\big(\rho^{(N)}(t),\rho^{(N)}(s)\big)
    \le C t^{-m/(m+1)}|t-s|^{1/2}
  \end{align*}
  for some $C>0$ depending only on $m$.
This implies for every $\tau>0$:
  \begin{align*}
    \int_0^{T-\tau} g\big(\rho^{(N)}(t),\rho^{(N)}(t+\tau)\big) \dd t
    \le C \int_0^T t^{-m/(m+1)}\dd t\ \tau^{1/2}
    \le (m+1)C T^{\frac{1}{m+1}}\,\tau^{1/2}.
  \end{align*}
  From Proposition \ref{prp:savare}, we conclude convergence in measure of a subsequence $(\rho^{(N_k)})$ to a limit $\rho:[0,T]\to L^1(\R)$.
  Since the $L^1(\R)$-norm of $\rho^{(N_k)}(t)$ is $t$-uniformly bounded by one,
  dominated convergence implies immediately the convergence of $\rho^{(N_k)}$ to some limit $\rho$ in $L^1([0,T]\times\R)$.
\end{proof}
\begin{lemma}
  \label{lem:convergeLm}
  For the subsequence $(\rho^{(N_k)})$ in Lemma \ref{lem:applysavare},
  we have $\rho^{(N_k)}\to\rho$ in $L^m([0,T]\times\R)$.
\end{lemma}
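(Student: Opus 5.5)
We upgrade the $L^1$ convergence of Lemma \ref{lem:applysavare} to $L^m$ convergence with the uniform $L^\infty$ decay of Theorem \ref{thm:Linfty_main}. The pointwise inequality
\[
|a^m-b^m|\le m\max(a,b)^{m-1}|a-b|
\]
alone does not suffice, since the bound $\max_k R_k^{(N)}(t)\le C_m t^{-1/(m+1)}$ blows up as $t\to0$. We therefore split the time interval into $[0,\delta]$ and $[\delta,T]$.

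\textbf{Step 1 (interpolation away from $t=0$).} On $[\delta,T]$, both $\rho^{(N_k)}$ and the limit $\rho$ are bounded in $L^\infty$ by $C_m\delta^{-1/(m+1)}$. For the limit this holds because a subsequence converges a.e. We use the elementary inequality $|a-b|^m\le \max(a,b)^{m-1}|a-b|$ for $a,b\ge0$. It gives
\[
\int_\delta^T\!\!\intR|\rho^{(N_k)}-\rho|^m\dd x\dd t\le \big(C_m\delta^{-1/(m+1)}\big)^{m-1}\,\|\rho^{(N_k)}-\rho\|_{L^1([0,T]\times\R)}\to0 .
\]

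\textbf{Step 2 (uniform smallness near $t=0$).} Mass conservation gives $\intR\rho^{(N)}(t)\dd x=1$, so
\[
\intR(\rho^{(N)})^m\dd x\le \max_k R_k^{m-1}\le C_m^{m-1}t^{-(m-1)/(m+1)} .
\]
The exponent $(m-1)/(m+1)$ is less than $1$, so this bound is integrable at $t=0$. Hence
\[
\int_0^\delta\!\!\intR(\rho^{(N)})^m\dd x\dd t\le C\,\delta^{2/(m+1)},
\]
uniformly in $N$. The same bound holds for $\rho$, by Fatou's lemma along an a.e. convergent subsequence. With $|a-b|^m\le 2^{m-1}(a^m+b^m)$, the contribution of $[0,\delta]$ to $\|\rho^{(N_k)}-\rho\|_{L^m}^m$ is at most $C\delta^{2/(m+1)}$, uniformly in $k$.

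\textbf{Step 3 (conclusion).} Given $\varepsilon>0$, we first choose $\delta$ so that the Step 2 term is below $\varepsilon/2$. We then choose $k$ large so that the Step 1 term is below $\varepsilon/2$. This gives $\rho^{(N_k)}\to\rho$ in $L^m([0,T]\times\R)$, and in particular $\rho\in L^m$.

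The main obstacle is the singular behaviour at $t=0$, since the data lie merely in $L^1$. It is handled by observing that the decay exponent in Theorem \ref{thm:Linfty_main}, combined with unit mass, gives an $L^m$ bound in time that is integrable at $0$. No assumption on the initial datum beyond \eqref{eq:In} is needed.
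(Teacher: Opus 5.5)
Your proof is correct. It uses the same ingredients as the paper: the pointwise bound $|a-b|^m\le\max(a,b)^{m-1}|a-b|$, the decay estimate \eqref{eq:Linfty} transferred to the limit by a.e. convergence, and the integrability of $t^{-(m-1)/(m+1)}$ at $t=0$. The two arguments differ only in how the singularity at $t=0$ is handled.

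The paper does not split the time interval. It applies the pointwise bound on all of $[0,T]$, which reduces matters to $\int_0^T t^{-(m-1)/(m+1)}\|\rho^{(N_k)}(t)-\rho(t)\|_{L^1(\R)}\dd t$. It then uses H\"older in time with exponents $\frac{m+1}{m}$ and $m+1$; the weight becomes $t^{-(m-1)/m}$, which is still integrable. Together with the crude bound $\|\rho^{(N_k)}(t)-\rho(t)\|_{L^1(\R)}\le 2$, this gives in one chain the explicit modulus $\|\rho^{(N_k)}-\rho\|_{L^m}^m\le C_m(T)\|\rho^{(N_k)}-\rho\|_{L^1([0,T]\times\R)}^{1/(m+1)}$.

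Your opening remark that the pointwise inequality alone does not suffice is therefore too pessimistic. The blow-up of the $L^\infty$ bound is harmless once you note that it is integrable in time, and your Step 2 exploits exactly that fact. Your $\delta$-splitting is the uniform-integrability form of the same observation. As written it is only qualitative, but choosing $\delta$ comparable to $\|\rho^{(N_k)}-\rho\|_{L^1([0,T]\times\R)}$ turns it into a rate. That rate is $\|\rho^{(N_k)}-\rho\|_{L^m}^m\lesssim\|\rho^{(N_k)}-\rho\|_{L^1}^{2/(m+1)}$, slightly better than the exponent $1/(m+1)$ produced by the paper's choice of H\"older exponents.
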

\begin{proof}
 The $L^\infty$-estimate \eqref{eq:Linfty} implies
  \begin{align*}
    \big\|\rho^{(N_k)}(t)\big\|_{L^\infty(\R)}\le \left(\frac{m+1}{16m\,t}\right)^{1/(m+1)}.
  \end{align*}
  By the $L^1$-convergence of $\rho^N$ to $\rho$, the limit $\rho$ satisfies the same decay estimate.
  Therefore,
  \begin{align*}
    &\big\|\rho^{(N_k)}-\rho\big\|_{L^m([0,T]\times\R)}^m
    = \int_0^T\intR |\rho^{(N_k)}-\rho|^m\dd x\dd t \\
    &\le \int_0^T \left[\big(\|\rho^{(N_k)}(t)\|_{L^\infty(\R)}+\|\rho(t)\|_{L^\infty(\R)}\big)^{m-1}
      \intR|\rho^{(N_k)}(t)-\rho(t)|\dd x\right]\dd t \\
    &\le 2^{m-1}\left(\frac{m+1}{16m}\right)^{(m-1)/(m+1)}
      \int_0^T\left[t^{-(m-1)/(m+1)}\intR|\rho^{(N_k)}(t)-\rho(t)|\dd x\right]\dd t\\
    &\le 2^{m-1}\left(\frac{m+1}{16m}\right)^{(m-1)/(m+1)}
      \left(\int_0^Tt^{-(m-1)/m}\dd t\right)^{m/(m+1)} \\
    &\qquad \left(\int_0^T\left[\intR|\rho^{(N_k)}(t)-\rho(t))|\dd x\right]^{m+1}\dd t\right)^{1/(m+1)}\\
    &\le 2^{m-1}\left(\frac{m+1}{16m}\right)^{(m-1)/(m+1)}
      \big(mT^{1/m}\big)^{m/(m+1)} \\
    &\qquad \left(\sup_{t\in[0,T|}\big[\|\rho^{(N_k)}(t)\|_{L^1(\R)}+\|\rho\|_{L^1(\R)}\big]^m \int_0^T\intR|\rho^{(N_k)}(t)-\rho(t)|\dd x\dd t \right)^{1/(m+1)} \\
    &\le
    % 2^{m-1/(m+1)}\left(\frac{(m+1)(mT)^{1/(m-1)}}{16}\right)^{(m-1)/(m+1)}
     C_m(T) \big\|\rho^{(N_k)}-\rho_T\big\|_{L^1([0,T]\times\R)}^{1/(m+1)}\,,
  \end{align*}
  for some constant $C_m(T)$ depending on $m$ and continuously on $T\in [0,T]$. Thus the convergence in $L^1([0,T]\times\R)$ obtained in Lemma \ref{lem:applysavare} above
  induces also convergence in $L^m([0,T]\times\R)$.
\end{proof}

%%%%%%%%%%%%%%%%%%%%%%%%%%%%%%%%%%%%%%%%%%%%%%%%%%%%%%%
\subsection{Limit equation}
%%%%%%%%%%%%%%%%%%%%%%%%%%%%%%%%%%%%%%%%%%%%%%%%%%%%%%%
%
Throughout this section, let $\varphi\in C^\infty_c([0,T)\times\R)$ be fixed,
and define, for each $N\in \mathbb{N}$ the following integrals $I^{(N)},J^{(N)}, K^{(N)}\in\R$:
\begin{align*}
  I^{(N)} &:= \frac1N\sum_{j=0}^N\int_0^T\partial_t\varphi\big(t;x^{(N)}_j(t)\big)\dd t, \\
  J^{(N)} &:= \frac1N\sum_{j=1}^N\int_0^TN\big(\partial_x\varphi\big(t;x^{(N)}_j(t)\big)-\partial_x\varphi\big(t;x^{(N)}_{j-1}(t)\big)\big)\,R^{(N)}_j(t)^m\dd t, \\
  K^{(N)} &:= \frac1N\sum_{j=0}^N\varphi(0;x^{(N)}_j(0)\big).
\end{align*}
\begin{lemma}
  For every $N\ge2$,
  \begin{align}
    \label{eq:consist}
    I^{(N)}+J^{(N)}=-K^{(N)}.
  \end{align}
\end{lemma}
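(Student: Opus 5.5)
The identity is an exact discrete integration by parts in time and space along particle trajectories. I would start from the fundamental theorem of calculus for each particle. Since $\varphi$ has compact support in $[0,T)\times\R$, we have $\varphi(T;\cdot)\equiv0$. For every $j\in\{0,\ldots,N\}$ the chain rule therefore gives
\begin{align*}
  -\varphi\big(0;x^{(N)}_j(0)\big) = \int_0^T \frac{\dn}{\dn t}\varphi\big(t;x^{(N)}_j(t)\big)\dd t
  = \int_0^T \Big(\partial_t\varphi\big(t;x^{(N)}_j(t)\big) + \partial_x\varphi\big(t;x^{(N)}_j(t)\big)\,\dot x^{(N)}_j(t)\Big)\dd t .
\end{align*}
This is legitimate because, by Proposition \ref{prop:MP}, the trajectories are globally defined and $C^1$: the right-hand side of \eqref{eq:particles1} stays smooth thanks to the lower bound \eqref{eq:MP} on the gaps. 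Averaging over $j$ with weight $1/N$ gives $-K^{(N)} = I^{(N)} + \frac1N\sum_{j=0}^N\int_0^T\partial_x\varphi(t;x_j(t))\,\dot x_j(t)\dd t$.

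It then remains to show that the last term equals $J^{(N)}$. I would insert the concise form \eqref{eq:particles1_differences}, $\dot x_j=-N(R_{j+1}^m-R_j^m)$, which holds for all $j=0,\ldots,N$ under the convention \eqref{eq:Rconvent}. The term becomes
\begin{align*}
  -\sum_{j=0}^N\int_0^T\partial_x\varphi(t;x_j)\big(R_{j+1}^m-R_j^m\big)\dd t .
\end{align*}
Next I would sum by parts in $j$, shifting the index in the $R_{j+1}^m$ part. The boundary contributions vanish because $R_0=R_{N+1}=0$. This leaves
\begin{align*}
  \sum_{j=1}^N\int_0^T\big(\partial_x\varphi(t;x_j)-\partial_x\varphi(t;x_{j-1})\big)R_j^m\dd t ,
\end{align*}
which is exactly $J^{(N)}$ once the factor $\frac1N\cdot N$ is recognised.

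The only point requiring care is the index bookkeeping at the two endpoints $j=0$ and $j=N$. These are the equations in \eqref{eq:particles1} with one-sided velocities, and the convention \eqref{eq:Rconvent} is precisely what makes them fit the uniform formula \eqref{eq:particles1_differences}. There is no genuine analytic obstacle: all sums are finite, and all integrands are continuous on $[0,T]$.
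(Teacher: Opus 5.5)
Your proof is correct and follows essentially the same route as the paper: the chain rule along each trajectory, the velocity law \eqref{eq:particles1_differences} with the convention \eqref{eq:Rconvent}, a summation by parts in $j$, and the fundamental theorem of calculus in time using $\varphi(T;\cdot)\equiv0$. The only cosmetic difference is the order of steps: the paper sums by parts at each fixed time and then integrates, whereas you integrate along each trajectory first and then sum by parts.
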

\begin{proof}
  By the chain rule, at every $t\in(0,T)$:
  \begin{align*}
    \frac{\dn}{\dn t}\frac1N\sum_{j=0}^N\varphi\big(t;x^{(N)}_j(t)\big)
    = \frac1N\sum_{j=0}^N\partial_t\varphi\big(t;x^{(N)}_j(t)\big)
    + \frac1N\sum_{j=0}^N\partial_x\varphi\big(t;x^{(N)}_j(t)\big)\dot x^{(N)}_j(t),
  \end{align*}
  Recall that the $x^{(N)}_j$ satisfy, with the convention \eqref{eq:Rconvent},
  \[ \dot x^{(N)}_j(t) = -\dff_j^+R^{(N)}(t)^m = -N\left(R^{(N)}_{j+1}(t)^m-R^{(N)}_j(t)^m\right),\qquad j\in \{0,\ldots,N\}\,,\]
  and so a summation by parts yields
  \begin{align*}
    \frac1N\sum_{j=0}^N\partial_x\varphi\big(t;x^{(N)}_j(t)\big)\dot x^{(N)}_j(t)
    = \sum_{j=1}^N \big(\partial_x\varphi\big(t;x^{(N)}_j(t)\big)-\partial_x\varphi\big(t;x^{(N)}_{j-1}(t)\big)\big)\,R^{(N)}_j(t)^m.
  \end{align*}
  Now integrate with respect to $t\in[0,T]$,
  and use the fundamental theorem of calculus to obtain \eqref{eq:consist}.
\end{proof}
\begin{lemma}
Let $\rho^{(N_k)}$ be the subsequence in Lemmas \eqref{lem:applysavare} and \ref{lem:convergeLm} and let $X^{(N_k)}$ be the corresponding subsequence of $X^{(N)}$. Then, under the additional assumption \eqref{eq:sampling_support_N} on the sampling map $\sam^N$, we have that
  \begin{align}
    \label{eq:converge1}
    \lim_{k\to\infty}I^{(N_k)} = \int_0^T\intR \partial_t\varphi\,\rho\dd x\dd t.
  \end{align}
\end{lemma}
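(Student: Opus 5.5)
The plan is to compare the particle sum $I^{(N)}$ with the integral of $\partial_t\varphi$ against the piecewise constant reconstruction $\rho^{(N)}$, and then pass to the limit using the $L^1$ convergence from Lemma \ref{lem:applysavare}. Set $\psi=\partial_t\varphi$, which is smooth with compact support. Since each interval $[x_{j-1},x_j)$ carries mass $1/N$, we have
\begin{align*}
\intR\psi(t,x)\rho^{(N)}(t,x)\dd x=\sum_{j=1}^N R_j\int_{x_{j-1}}^{x_j}\psi(t,x)\dd x=\frac1N\sum_{j=1}^N \fint_{x_{j-1}}^{x_j}\psi(t,x)\dd x .
\end{align*}
Hence
\begin{align*}
\Big|\frac1N\sum_{j=0}^N\psi(t,x_j(t))-\intR\psi\rho^{(N)}\dd x\Big|\le \frac{\|\psi\|_\infty}{N}+\frac{\|\partial_x\psi\|_\infty}{N}\sum_{j=1}^N d_j(t)=\frac{\|\psi\|_\infty+\|\partial_x\psi\|_\infty\,\mathcal L^N[X^{(N)}(t)]}{N},
\end{align*}
where the extra term accounts for the point $x_0$ and the mean value theorem handles each interval.

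Next I will control the total length uniformly in $t\in[0,T]$ using Theorem \ref{thm:support_main}:
\begin{align*}
\mathcal L^N[X^{(N)}(t)]\le\mathcal L^N[\overline X^{(N)}]+BT^{1/(m+1)}.
\end{align*}
By assumption \eqref{eq:sampling_support_N}, $\mathcal L^N[\overline X^{(N)}]=o(N^{1/2})$, so in particular $\mathcal L^N[\overline X^{(N)}]/N\to0$. Integrating the pointwise-in-time error over $[0,T]$ then shows that $I^{(N)}-\int_0^T\intR\psi\rho^{(N)}\dd x\dd t\to0$. This is where the additional assumption is needed. Without it, a sampling map could place far-off particles with large gaps, and the length of the support would no longer be negligible compared with $N$. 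I expect this step to be the only real obstacle, and Theorem \ref{thm:support_main} resolves it.

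Finally, since $\psi$ is bounded and $\rho^{(N_k)}\to\rho$ in $L^1([0,T]\times\R)$ by Lemma \ref{lem:applysavare}, we get
\begin{align*}
\int_0^T\intR\psi\rho^{(N_k)}\dd x\dd t\to\int_0^T\intR\psi\rho\dd x\dd t .
\end{align*}
Combining this with the previous step yields \eqref{eq:converge1}.
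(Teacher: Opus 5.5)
Your proof is correct, but it handles the quadrature step differently from the paper. The paper rewrites $I^{(N)}$ as the trapezoidal sum $\sum_j (x_j-x_{j-1})R_j\frac12\big(\partial_t\varphi(x_j)+\partial_t\varphi(x_{j-1})\big)$ plus the boundary term $\frac1{2N}\big(\partial_t\varphi(x_0)+\partial_t\varphi(x_N)\big)$. It then uses the trapezoidal error bound $\frac1{12}\|\partial_t\partial_{xx}\varphi\|_{L^\infty}(x_j-x_{j-1})^3$. Multiplied by $R_j=1/(Nd_j)$ and summed, this leaves an error of order $\frac1N\sum_j d_j^2\le\frac1N(\mathcal L^N)^2$, which is why \eqref{eq:sampling_support_N} is needed there.

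You instead compare each point value with the interval average by the mean value theorem. This is a first-order estimate: the error is at most $\|\partial_x\partial_t\varphi\|_{L^\infty}d_j$ per interval, and weighting by the mass $1/N$ gives a total of $\|\partial_x\partial_t\varphi\|_{L^\infty}\mathcal L^N/N$. From there the two arguments coincide. Theorem \ref{thm:support_main} controls $\mathcal L^N[X^{(N)}(t)]$ uniformly on $[0,T]$, and the $L^1$ convergence of Lemma \ref{lem:applysavare} handles $\int_0^T\intR\partial_t\varphi\,\rho^{(N_k)}$.

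Your version is slightly more economical. It uses one fewer spatial derivative of $\partial_t\varphi$, and it only needs $\mathcal L^N[\overline X^{(N)}]=o(N)$, which is strictly weaker than \eqref{eq:sampling_support_N}. The higher local order of the trapezoidal rule buys nothing in the paper's argument, because the bound $\sum_j d_j^2\le(\mathcal L^N)^2$ discards the gain. This lemma is the only place where the paper's proofs invoke \eqref{eq:sampling_support_N}, so your estimate suggests that hypothesis could be relaxed accordingly.
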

\begin{proof}
  For ease of notation, we omit the explicit dependence of $x$ and $\varphi$ on $t\in(0,T)$,
  and the super-index $(N)$ on the particles positions below.
  First, let us start with the following immediate consequence of Lemma \ref{lem:applysavare}:
  \begin{align}
    \label{eq:help013}
    \lim_{k\to 0}\left|\int_0^T\intR\partial_t\varphi\,\rho^{(N_k)}\dd x\dd t
    - \int_0^T\intR\partial_t\varphi\,\rho\dd x\dd t\right|=0.
  \end{align}
  Next, rewrite $I^{(N)}$ as follows:
  \begin{align*}
    I^{(N)} &= \frac1N\sum_{j=0}^N \int_0^T\partial_t\varphi(x_j) \dd t \\
    &= \int_0^T \bigg[\frac1{2N}\big(\partial_t\varphi(x_0)+\partial_t\varphi(x_N) \\
    &\qquad +\frac12\sum_{j=0}^{N-1}(x_{j+1}-x_j)\frac{1/N}{x_{j+1}-x_j}\,\partial_t\varphi(x_j)
    +\frac12\sum_{j=1}^{N}(x_{j}-x_{j-1})\frac{1/N}{x_{j}-x_{j-1}}\,\partial_t\varphi(x_j)
      \bigg]\dd t \\
    &= \int_0^T\left[\sum_{j=1}^N(x_j-x_{j-1})R^{(N)}_j\frac{\partial_t\varphi(x_j)+\partial_t\varphi(x_{j-1})}2\right]\dd t
      + \frac1{2N}\int_0^T\big(\partial_t\varphi(x_0)+\partial_t\varphi(x_N)\big)\dd t.
  \end{align*}
  On the one hand, by global boundedness of $\partial_t\varphi$,
  \begin{align*}
    \int_0^T \bigg[\frac1{2N}\big(\partial_t\varphi(x_0)+\partial_t\varphi(x_N)\big)\bigg]\dd t
    \le \frac{\|\partial_t\varphi\|_{L^\infty}T}{2N},
  \end{align*}
  which converges to zero as $N\rightarrow+\infty$.
  On the other hand, by global boundedness of $\partial_t\partial_{xx}\varphi$, the trapezoidal rule (see e.g. \cite[Section 5.1]{atkinson}) implies that
  \begin{align*}
    \left|\int_{x_{j-1}}^{x_j}\partial_t\varphi\dd x - (x_j-x_{j-1})\frac{\partial_t\varphi(x_j)+\partial_t\varphi(x_{j-1})}2\right|
    \le \frac{\|\partial_t\partial_{xx}\varphi\|_{L^\infty}}{12}(x_j-x_{j-1})^3,
  \end{align*}
  and so, using the functional $\mathcal{L}^{N}$ introduced in \eqref{eq:functional_L}, we obtain
  \begin{align*}
    &\left|\intR \rho^{(N)}\partial_t\varphi\dd x - \sum_{j=1}^N(x_j-x_{j-1})R_j\frac{\partial_t\varphi(x_j)+\partial_t\varphi(x_{j-1})}2\right|\\
    &\le \frac{\|\partial_t\partial_{xx}\varphi\|_{L^\infty}}{12} \sum_{j=1}^N(x_j-x_{j-1})^3R_j \le \frac{\|\partial_t\partial_{xx}\varphi\|_{L^\infty}}{12N} \sum_{j=1}^N(x_j-x_{j-1})^2 \le \frac{\|\partial_t\partial_{xx}\varphi\|_{L^\infty}}{12N} \big(\mathcal{L}^N(X^{(N)}(t))\big)^2.
  \end{align*}
  Integrating in time, this implies further that
  \begin{align*}
    \left|\int_0^T\intR \rho_T^{(N)}\, \partial_t\varphi\dd x - I^{(N)}\right|
    \le \left(\frac{\|\partial_t\varphi\|_{L^\infty}T}{2}
    +\frac{\|\partial_t\partial_{xx}\varphi\|_{L^\infty}T}{12}  \big(\mathcal{L}^N(X^{(N)}(t))\big)^2\right)\frac1N.
  \end{align*}
  Using the bound \eqref{eq:Lest} of Theorem \ref{thm:support_main} on $\mathcal{L}^{N}$,
  the hypothesis \eqref{eq:sampling_support_N},
  and \eqref{eq:help013} from the first step of the proof,
  the claimed convergence \eqref{eq:converge1} now follows.
\end{proof}
\begin{lemma}
  We have that
  \begin{align}
    \label{eq:converge2}
    \lim_{k\to\infty}J^{(N_k)} = \int_0^T\intR \partial_{xx}\varphi\,\rho^m\dd x\dd t.
  \end{align}
\end{lemma}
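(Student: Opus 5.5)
Since $\rho^{(N)}=R^{(N)}_j$ on $[x_{j-1},x_j)$, the natural comparison is
\[
J^{(N)}=\int_0^T\sum_{j=1}^N\big(\partial_x\varphi(x_j)-\partial_x\varphi(x_{j-1})\big)R_j^m\dd t
\]
against
\[
\int_0^T\intR\partial_{xx}\varphi\,(\rho^{(N)})^m\dd x\dd t=\int_0^T\sum_{j=1}^N R_j^m\int_{x_{j-1}}^{x_j}\partial_{xx}\varphi\dd x\dd t .
\]
The fundamental theorem of calculus gives $\int_{x_{j-1}}^{x_j}\partial_{xx}\varphi\dd x=\partial_x\varphi(x_j)-\partial_x\varphi(x_{j-1})$ exactly. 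Hence
\[
J^{(N)}=\int_0^T\intR\partial_{xx}\varphi\,\big(\rho^{(N)}\big)^m\dd x\dd t
\]
identically for every $N$, with no consistency error at all. The only subtlety is the convention at the ends: $\rho^{(N)}$ vanishes outside $[x_0,x_N)$, and the sum in $J^{(N)}$ runs exactly over $j=1,\ldots,N$. So no boundary terms appear.

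The claim then reduces to showing
\[
\int_0^T\intR\partial_{xx}\varphi\,\big((\rho^{(N_k)})^m-\rho^m\big)\dd x\dd t\to0 .
\]
I will bound this by $\|\partial_{xx}\varphi\|_{L^\infty}\,\|(\rho^{(N_k)})^m-\rho^m\|_{L^1([0,T]\times\R)}$. For the last norm I use the elementary inequality $|a^m-b^m|\le m\,(a^{m-1}+b^{m-1})|a-b|$ for $a,b\ge0$. Then Hölder with exponents $m/(m-1)$ and $m$ gives
\[
\|(\rho^{(N_k)})^m-\rho^m\|_{L^1}\le m\Big(\|\rho^{(N_k)}\|_{L^m}^{m-1}+\|\rho\|_{L^m}^{m-1}\Big)\|\rho^{(N_k)}-\rho\|_{L^m},
\]
with all norms on $[0,T]\times\R$.

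By Lemma~\ref{lem:convergeLm}, $\|\rho^{(N_k)}-\rho\|_{L^m}\to0$, and in particular the $L^m$ norms of $\rho^{(N_k)}$ stay bounded. Finiteness of $\|\rho\|_{L^m}$ also follows from the proof of Lemma~\ref{lem:convergeLm}: there $\rho$ satisfies the decay bound \eqref{eq:Linfty}, and $t^{-(m-1)/(m+1)}$ is integrable near $0$. This yields \eqref{eq:converge2}.

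The only point needing care, and the main potential obstacle, is integrability near $t=0$. There $\rho^{(N)}$ is not uniformly bounded in $L^\infty$, so one cannot argue pointwise in time with uniform constants. This is exactly why I route the argument through the space-time $L^m$ convergence of Lemma~\ref{lem:convergeLm} rather than a time-pointwise estimate. No use of \eqref{eq:sampling_support_N} is needed here, in contrast to the lemma for $I^{(N)}$, because the identity for $J^{(N)}$ is exact.
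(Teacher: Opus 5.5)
Your proposal is correct and follows the paper's proof: you derive the exact identity $J^{(N)}=\int_0^T\intR\partial_{xx}\varphi\,(\rho^{(N)})^m\dd x\dd t$ from the fundamental theorem of calculus, then pass to the limit using the $L^m([0,T]\times\R)$ convergence of Lemma~\ref{lem:convergeLm}. Your only addition is the explicit step, via $|a^m-b^m|\le m(a^{m-1}+b^{m-1})|a-b|$ and Hölder, showing that $L^m$ convergence gives $L^1$ convergence of the $m$-th powers; the paper leaves this step implicit.
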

\begin{proof}
  Again, we omit the explicit dependence of $\varphi$ and $x$ on $t\in[0,T]$,
  and the super-index $N$ of $x^{(N)}$.
  We have:
  \begin{align*}
   & J^{(N_k)}
    = \int_0^T\sum_{j=1}^N \big(\partial_x\varphi(x_j)-\partial_x\varphi(x_{j-1})\big)\,\big(R^{(N)}_j\big)^m\dd t \\
    &= \int_0^T\sum_{j=1}^N \left(\int_{x_{j-1}}^{x_j}\partial_{xx}\varphi \dd x\right)\,\big(R^{(N)}_j\big)^m\dd t= \int_0^T\intR \partial_{xx}\varphi\,\big(\rho^{(N)}\big)^m\dd x\dd t.
  \end{align*}
  Using now the convergence of $\rho^{(N_k)}$ to $\rho$ in $L^m([0,T]\times\R)$ from Lemma \ref{lem:convergeLm},
  the claim \eqref{eq:converge2} follows.
\end{proof}
\begin{lemma}
 We have
  \begin{align}
    \label{eq:converge3}
    \lim_{k\to\infty}K^{(N_k)} = \intR \varphi(0,\cdot)\overline{\rho}\dd x.
  \end{align}
\end{lemma}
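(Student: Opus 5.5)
The claim concerns only the initial configuration $\overline{X}^{(N)}=\sam^N[\overline{\rho}]$. It is therefore a direct consequence of the sampling approximation property \eqref{eq:initial_sampling}. The only mismatch is a normalization one: $K^{(N)}$ is defined with weight $1/N$ over $N+1$ points, while \eqref{eq:initial_sampling} uses weight $1/(N+1)$. Since $x^{(N)}_j(0)=\sam^N_j[\overline{\rho}]$, we can write
\begin{align*}
  K^{(N)} = \frac{N+1}{N}\cdot\frac{1}{N+1}\sum_{j=0}^N\varphi\big(0;\sam^N_j[\overline{\rho}]\big).
\end{align*}
The factor $(N+1)/N$ tends to $1$.

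Next I would check that $\varphi(0,\cdot)$ is an admissible test function in \eqref{eq:initial_sampling}. Since $\varphi\in C^\infty_c([0,T)\times\R)$, the function $x\mapsto\varphi(0,x)$ is continuous and compactly supported, so it lies in $C_0(\R)$. Applying \eqref{eq:initial_sampling} with this test function, the normalized sum converges to $\intR\varphi(0,x)\overline{\rho}(x)\dd x$ as $N\to\infty$.

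The product of a sequence converging to $1$ and a sequence converging to a finite limit converges to the product of the limits. Hence $K^{(N)}\to\intR\varphi(0,\cdot)\overline{\rho}\dd x$ along the full sequence, and in particular along the subsequence $N_k$. This gives \eqref{eq:converge3}.

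There is no real obstacle here. The only point needing care is the bookkeeping between the weights $1/N$ and $1/(N+1)$. Equivalently, one can bound the difference between the two normalizations by $\frac{1}{N}\|\varphi\|_{L^\infty}$, which also tends to $0$. Note that neither \eqref{eq:sampling_support_N} nor any dynamical estimate is needed, since only time $t=0$ enters.
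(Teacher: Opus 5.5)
Your proof is correct and follows the paper's argument: you apply the sampling property \eqref{eq:initial_sampling} to the test function $\varphi(0,\cdot)\in C_0(\R)$ and absorb the normalization mismatch through $(N+1)/N\to 1$. Your alternative bound of the discrepancy by $\frac1N\|\varphi\|_{L^\infty}$ is also correct.
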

\begin{proof}
  The claim follows immediately from \eqref{eq:initial_sampling} with $\psi:=\varphi(0,\cdot)\in C^\infty_c(\R)$,
  simply using that $\lim_{N\to\infty}(N+1)/N=1$.
\end{proof}
\begin{proof}[Proof of Theorem \ref{thm:convergence_main}]
  Use \eqref{eq:converge1},  \eqref{eq:converge2}, and  \eqref{eq:converge3} to pass to the limit in \eqref{eq:consist},
  which yields the very weak form \eqref{eq:weakPME}. The uniqueness result in \cite{pierre_nonlinear_analysis_1982} implies the whole sequence $\rho^{(N)}$ converges to $\rho$.
\end{proof}

\section*{Acknowledgments}
MDF is partially supported by the Italian “National Centre for HPC, Big Data and Quantum Computing” - Spoke 5 “Environment and Natural Disasters” and by the Ministry of University and Research (MIUR) of Italy under the grant PRIN 2020- Project N. 20204NT8W4, Nonlinear Evolutions PDEs, fluid
dynamics and transport equations: theoretical foundations and applications.
MDF is also partially supported by the InterMaths Network, \url{www.intermaths.eu}. MDF acknowledges the (usual) warm hospitality from his colleagues at TU Munich during his visit in May 2025, which served to give the decisive boost to this work.
DM's research has been supported by the DFG Collaborative Research Center TRR 109, ``Discretization in Geometry and Dynamics.''

\begin{appendix}

\section{A technical lemma}\label{sec:appendix_lemma}
\begin{proof}[Proof of Lemma \ref{lem:technical1}]
    We compute
    \begin{align*}
        & \sum_{k=0}^{N+1} \Delta_k F = N\sum_{k=1}^{N}\left((\dff^+ F)_k-(\dff^+ F)_{k-1}\right) +N^2 F_1+N^2 F_{N}\\
        & \ = -N(\dff^+ F)_0+ N(\dff^+ F)_N+N^2 F_1+N^2 F_N = 0.
    \end{align*}
    Hence, either $\Delta_k F=0$ for all $k=0,\ldots,N+1$ or there exists at least one index $k=0,\ldots,N+1$ such that $\Delta_k F<0$. The former case implies in particular $\Delta_{0}F=N^2F_1=0$, which in turns implies $F_2=0$ from $\Delta_1 F=N^2(F_2-2F_1)=0$, and inductively one gets $F_k=0$ for all $k=1,\ldots,N$ which proves the assertion. In the latter case, the index $k$ at which $\Delta_k F<0$ can be neither $k=0$ nor $k=N+1$, as in both cases one gets a nonnegative quantity.
\end{proof}

\section{Some compactness results}\label{sec:aubin}
We start with a version of Aubin-Lions lemma.
\begin{proposition}[Theorem 2 of \cite{rossi_savare}]
    \label{prp:savare}
    Let $T>0$. On a non-empty closed convex subset $U$ of a Banach space $B$, let be given:
    \begin{itemize}
        \item a lower semi-continuous functional $\fnc:U\to[0,+\infty]$ with relatively compact sublevel sets;
        \item a lower semi-continuous map $g:U\times U\to[0,\infty]$ such that $g(u,v)=0$ implies $u=v$.
    \end{itemize}
    Assume that $(u_n)$ is a sequence of maps $u_n:(0,T)\to X$ from a time interval $(0,T)$ into $U$, satisfying the following:
    \begin{enumerate}
        \item the sequence is tight with respect to $\fnc$,
        \begin{align}
            \label{eq:stight}
            \sup_n \int_0^T \fnc\big(u_n(t)\big)\dd t<\infty.
        \end{align}
        \item the $u_n$ are weakly equi-continuous with respect to $g$,
        \begin{align}
            \label{eq:sequicont}
            \lim_{\tau\searrow0}\sup_n\int_0^{T-\tau}g\big(u_n(t+\tau),u_n(t)\big)\dd t=0.
        \end{align}
    \end{enumerate}
    Then there is a subsequence $(u_{n_k})$ that converges in measure with respect to $t\in(0,T)$ in the $B$-norm to a limit $\bar u$.
\end{proposition}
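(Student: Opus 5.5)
This is the Aubin–Lions-type compactness result of Rossi–Savaré; here it is quoted from the literature, so in the paper I would simply cite it. If I had to prove it, I would follow the standard strategy in three steps.

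\emph{Step 1: a compact set carrying most of the time slices.} For $M>0$ set $K_M=\{u\in U:\fnc(u)\le M\}$. This set is relatively compact by hypothesis, and it is closed because $\fnc$ is lower semicontinuous, so $K_M$ is compact in $B$. By Chebyshev and \eqref{eq:stight},
\[
\mathrm{meas}\{t:u_n(t)\notin K_M\}\le C/M
\]
uniformly in $n$. Hence for each $\varepsilon>0$ there is a compact $K$ in which all $u_n(t)$ lie, except on time sets of measure less than $\varepsilon$, uniformly in $n$.

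\emph{Step 2: on $K$, $g$ controls the norm.} By lower semicontinuity, compactness and the property $g(u,v)=0\Rightarrow u=v$, a contradiction argument gives the following: for every $\eta>0$ there is $\delta>0$ such that $u,v\in K$ and $g(u,v)<\delta$ imply $\|u-v\|_B<\eta$. Indeed, suppose $u_k,v_k\in K$ with $g(u_k,v_k)\to0$ and $\|u_k-v_k\|\ge\eta$. Extract convergent subsequences $u_k\to u$, $v_k\to v$. Lower semicontinuity gives $g(u,v)=0$, so $u=v$, which contradicts $\|u-v\|\ge\eta$.

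\emph{Step 3: a Fréchet–Kolmogorov type criterion.} I would show that $(u_n)$ is Cauchy in measure along a subsequence. The cleanest route is to embed the problem in the space of maps $(0,T)\to B$ with the topology of convergence in measure, and prove relative compactness there using a criterion of Fréchet–Kolmogorov type: uniform concentration on compacts (Step 1) together with equicontinuity in measure of time translations. The latter means
\[
\mathrm{meas}\{t:\|u_n(t+\tau)-u_n(t)\|>\eta\}\to0 \quad\text{as } \tau\to0,\ \text{uniformly in } n.
\]
This follows from \eqref{eq:sequicont} via Chebyshev applied to $g$, combined with Step 2 on the set of times where both $u_n(t)$ and $u_n(t+\tau)$ lie in $K$.

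The remaining task, which I expect to be the main obstacle, is the compactness criterion in measure itself. Concretely, I would regularize by time averages $u_n^h(t)=\frac1h\int_t^{t+h}u_n$, or better by piecewise-constant time discretizations: pick the values $u_n(t_i)$ at good times in $K$. By compactness of $K$ and a diagonal argument, these step functions have a convergent subsequence. The translation estimate then shows that $u_n$ is uniformly close in measure to its discretization, and an $\varepsilon/3$ argument concludes the proof.

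A technical point is that $g$ is not assumed to be a metric (no triangle inequality). This is why I would compare $u_n$ with its step discretization directly in the $B$-norm, using Step 2, rather than going through $g$.
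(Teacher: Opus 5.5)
The paper gives no proof of this proposition; it is quoted from Rossi--Savar\'e, whose argument uses Young measures. Tightness with respect to $\fnc$ yields a subsequence generating a limit Young measure $(\nu_t)_{t\in(0,T)}$ concentrated on $\{\fnc<+\infty\}$. Integral equicontinuity, together with the lower semicontinuity of $g$ and the condition $g(u,v)=0\Rightarrow u=v$, then shows that $\nu_t$ is a Dirac mass for a.e.\ $t$, which is equivalent to convergence in measure.

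Your route is genuinely different and more elementary: you turn $g$-equicontinuity into norm-equicontinuity in measure, then apply a Fr\'echet--Kolmogorov-type criterion for convergence in measure. Steps 1 and 2 are correct. $K_M$ is compact because it is relatively compact and closed in $B$, the latter since $U$ is closed and $\fnc$ is l.s.c., and the contradiction argument comparing $g$ with $\|\cdot\|_B$ on $K_M$ is right. Deriving the translation estimate in measure from \eqref{eq:sequicont} via Chebyshev and Step 2 is also correct. The Young-measure proof is more structural and avoids bookkeeping. Yours is self-contained but puts all the work into Step 3.

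Step 3 is where your sketch is not yet an argument. The translation estimate controls, for each fixed shift $r$, the measure of $\{t:\|u_n(t+r)-u_n(t)\|_B>\eta\}$. It says nothing about the distance from $u_n(t)$ to the value of $u_n$ at a \emph{given} grid point. So ``pick $u_n(t_i)$ at good times in $K$'' does not by itself make $u_n$ close in measure to its discretization. The missing ingredient is to average over the offset of the grid. For the grid $\{s+ih\}_i$, Fubini gives
\[
\frac1h\int_0^h\sum_i \mathrm{meas}\big\{t\in[s+ih,s+(i+1)h):\ \|u_n(t)-u_n(s+ih)\|_B>\eta\big\}\dd s \le \sup_{0<r<h}\mathrm{meas}\big\{t:\ \|u_n(t+r)-u_n(t)\|_B>\eta\big\}.
\]
Separately, the set of offsets $s\in[0,h)$ for which some anchor $u_n(s+ih)$ lies outside $K_M$ has measure at most $\mathrm{meas}\{t:u_n(t)\notin K_M\}\le C/M$, with $C$ the bound in \eqref{eq:stight}.

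Choose $M\gg C/h$, so the compact set must depend on the mesh. Then for each $n$ there is an offset $s_n$, depending on $n$, that is good on both counts. Extract a subsequence along which $s_n$ and the finitely many anchor values in $K_M$ converge; the step functions then converge in measure. A diagonal argument in $h,\eta\to0$ gives a subsequence that is Cauchy in measure, hence convergent, since strongly measurable maps form a complete space under convergence in measure. Strong measurability of $u_n$ is tacitly assumed in the statement. With this the proof is complete.

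Your preference for step functions over time averages is well founded: $\frac1h\int_t^{t+h}u_n$ would need Bochner integrability of $u_n$, which is not among the hypotheses.
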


Next we prove some technical lemmas.
\begin{lemma}\label{lem:appendix_TV}
    Let $\rho_n\rightarrow \rho$ in $L^1(\R)$. Then, for all $\gamma>1$ there holds
    \begin{equation}\label{eq:lower_semi}
        \tvn{\rho^{\gamma}}\leq \liminf_{n\rightarrow+\infty}\tvn{\rho^\gamma_n}
    \end{equation}
\end{lemma}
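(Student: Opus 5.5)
The plan is to use the dual characterization \eqref{eq:tv} of the total variation: it is a supremum of functionals that are continuous in $L^1$, and such a supremum is lower semicontinuous. Fix $\gamma>1$ and a test function $\varphi\in C^1(\R)$ with $\sup|\varphi|\le1$. The intended steps are the following.

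\begin{itemize}
\item Reduce to the case $\liminf_n\tvn{\rho_n^\gamma}<\infty$, since otherwise there is nothing to prove. Then pass to a subsequence along which $\tvn{\rho_n^\gamma}\to\liminf_n\tvn{\rho^\gamma_n}=:L$.
\item Pass to a further subsequence with $\rho_n\to\rho$ almost everywhere, which Fischer--Riesz allows.
\item Show that, for this fixed $\varphi$,
\[\intR\partial_x\varphi\,\rho_n^\gamma\dd x\to\intR\partial_x\varphi\,\rho^\gamma\dd x .\]
Since this yields $\intR\partial_x\varphi\,\rho^\gamma\dd x\le L$, taking the supremum over $\varphi$ gives \eqref{eq:lower_semi}.
\end{itemize}

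The main obstacle is the convergence of the integrals in the last step. The functions $\rho_n^\gamma$ converge only almost everywhere, and $\partial_x\varphi$ need not be compactly supported or integrable, since the class in \eqref{eq:tv} only requires $\varphi\in C^1$ and bounded.

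To handle this, I would first restrict the supremum in \eqref{eq:tv} to $\varphi\in C^1_c(\R)$ with $|\varphi|\le1$. For $\rho^\gamma\in L^1_{\mathrm{loc}}$ this gives the same value; in fact, for general bounded $\varphi$ the integral $\intR\partial_x\varphi\,\rho^\gamma$ need not even be defined. I would read the definition in this standard way and note that it coincides with the usual total variation. With $\varphi$ compactly supported, say $\mathrm{supp}\,\varphi\subset K$ with $K$ compact, I need $\rho_n^\gamma\to\rho^\gamma$ in $L^1(K)$.

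To get this convergence I would use a uniform $L^\infty$ bound from the bounded variation. A function $u_n=\rho_n^\gamma$ with $\tvn{u_n}\le L+1$ that is integrable enough to have limit $0$ at $-\infty$ satisfies $\|u_n\|_{L^\infty}\le L+1$. Here $\rho_n\in L^1$ ensures that $u_n$ is small on sets of large measure, and the essential limit at $-\infty$ exists because $u_n$ has bounded variation, so that limit must be $0$. Hence the $\rho_n^\gamma$ are uniformly bounded on $K$, and dominated convergence on $K$ together with the almost everywhere convergence gives $\rho_n^\gamma\to\rho^\gamma$ in $L^1(K)$.

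As a fallback, in case one prefers to avoid the BV fact, I would use Vitali's theorem. In the present paper's application the $L^\infty$ bound is in any case available directly from \eqref{eq:Linfty}.

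Since the conclusion holds along a subsequence of any subsequence that realizes the $\liminf$, the inequality follows for the original sequence.
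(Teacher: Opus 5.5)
Your argument is correct, and it follows the paper's skeleton: pass to a subsequence realizing the $\liminf$, get a.e.\ convergence from Fischer--Riesz, upgrade to $\rho_n^\gamma\to\rho^\gamma$ in $L^1_{\mathrm{loc}}(\R)$, and conclude by lower semicontinuity of the total variation. The routes differ in the middle and final steps.

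The paper invokes Helly's selection theorem to obtain a.e.\ and $L^1_{\mathrm{loc}}$ convergence of $\rho_{n_k}^\gamma$ to some $\eta$, identifies $\eta=\rho^\gamma$, and then cites the $L^1_{\mathrm{loc}}$ lower semicontinuity result of Evans--Gariepy. You notice that Helly is superfluous once a.e.\ convergence is in hand; what is really needed is a uniform bound. You get it from $\rho_n\ge0$, $\rho_n\in L^1$ and finite variation, which give $\|\rho_n^\gamma\|_{L^\infty}\le\tvn{\rho_n^\gamma}\le L+1$. Dominated convergence on compacts then yields the local $L^1$ convergence, and lower semicontinuity follows in one line from the dual formula with $\varphi\in C^1_c(\R)$. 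This is more elementary and self-contained. It also makes explicit the uniform sup bound that Helly's theorem tacitly requires and that the paper never mentions. Your remark that the test functions in \eqref{eq:tv} must be taken compactly supported for the integral to make sense is a legitimate clarification.

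Two side remarks in your proposal should not be relied on. First, the Vitali fallback needs uniform integrability of $\rho_n^\gamma$ on compacts, which does not follow from $L^1$ convergence of $\rho_n$ alone; it does follow from your BV bound, so it is not an independent route. Second, \eqref{eq:Linfty} cannot replace the general argument. Proposition \ref{prp:savare} requires lower semicontinuity of $\fnc$ on all of $U$, not only along the particle sequence, and that bound in any case degenerates as $t\to0$.
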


\begin{proof}
Let $\rho_{n_k}$ be a subsequence of $\rho_n$ such that
\[\lim_{k\rightarrow+\infty}\tvn{\rho^\gamma_{n_k}}=\liminf_{n\rightarrow+\infty}\tvn{\rho^\gamma_n}\,.\]
Without restriction, due to the convergence of $\rho_{n_k}$ to $\rho$ in $L^1(\R)$ and to the classical Fisher-Riesz theorem (see \cite[Theorem 4.9]{brezis_FA}), we may assume that $\rho_{n_k}\rightarrow \rho$ almost everywhere on $\R$. Helly's Theorem (see e.g. \cite[Theorem 2.3]{bressan_book}) implies that, up to extracting a further subsequence we keep denoting by $\rho_{n_k}$ for simplicity,
\begin{equation}\label{eq:almost_ev}
    \rho_{n_k}^\gamma\rightarrow \eta\qquad \hbox{almost everywhere on $\R$ and in $L^1_{\mathrm{loc}}(\R)$}\,,
\end{equation}
where have also used once again Fisher-Riesz theorem and a standard diagonal procedure. The above implies $\rho_{n_k}\rightarrow \overline{\rho}:=\eta^{1/\gamma}$ almost everywhere. Hence, $\overline{\rho}=\rho$ almost everywhere and $\rho_{n_k}^\gamma \rightarrow \rho^\gamma$ almost everywhere and in $L^1_{\mathrm{loc}}(\R)$. We then apply the standard $L^1_{\mathrm{loc}}$ lower semi-continuity of the total variation, see \cite[Theorem 1, Subsection 5.2.1]{evans_gariepy} to obtain
\[\tvn{\rho^\gamma}\leq \lim_{n\rightarrow+\infty}\tvn{\rho^\gamma_{n_k}}=\liminf_{n\rightarrow+\infty}\tvn{\rho^\gamma_n}\]
\end{proof}

\begin{lemma}\label{lem:appendix_moment}
  Assume the sequence $(\rho^n)_n$ is compact in $L^1(\Omega)$ for any $\Omega\subset \R$ compact and satisfies $\rho_n\geq 0$ $\mathcal{L}^1$-almost everywhere. Assume $\int_\R |x|\rho^n(x)dx$ is uniformly bounded. Then $\rho^n$ is compact in $L^1(\R)$.
\end{lemma}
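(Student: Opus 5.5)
The plan is to use the first moment to get uniform smallness of the mass at infinity, and then to glue this tail control to the local compactness on bounded sets by a diagonal argument. The key inequality is Chebyshev's: for every $M>0$ and every $n$,
\[
\int_{\{|x|>M\}}\rho^n\dd x\le \frac1M\int_{\{|x|>M\}}|x|\rho^n\dd x\le \frac{C}{M},
\]
where $C:=\sup_n\int_\R|x|\rho^n\dd x<\infty$. Nonnegativity of $\rho^n$ is used here, so that the integral of $\rho^n$ is exactly its $L^1$-norm on the tail.

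The steps, in order, are as follows.
\begin{enumerate}
\item Apply the assumed $L^1(\Omega)$-compactness successively on $\Omega_M=[-M,M]$ for $M=1,2,\dots$. Extracting nested subsequences and taking the diagonal sequence $\rho^{n_k}$ gives a sequence that is Cauchy in $L^1([-M,M])$ for every $M\in\mathbb{N}$.
\item Show the diagonal sequence is Cauchy in $L^1(\R)$. Given $\varepsilon>0$, pick $M$ with $2C/M<\varepsilon/2$. Then
\[
\|\rho^{n_k}-\rho^{n_l}\|_{L^1(\R)}\le \|\rho^{n_k}-\rho^{n_l}\|_{L^1([-M,M])}+\int_{\{|x|>M\}}(\rho^{n_k}+\rho^{n_l})\dd x .
\]
The second term is at most $2C/M<\varepsilon/2$ by the Chebyshev bound, and the first is below $\varepsilon/2$ for $k,l$ large by step 1.
\item Conclude from completeness of $L^1(\R)$ that $\rho^{n_k}$ converges in $L^1(\R)$. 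Since every subsequence of $(\rho^n)$ satisfies the same hypotheses, the argument applies to it as well, which gives relative compactness.
\end{enumerate}

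Two points need care. First, one should make sure that the local limits are consistent with each other; the Cauchy-sequence formulation avoids this issue entirely. Second, ``compact in $L^1(\Omega)$'' should be read as relative compactness of the restrictions $\rho^n|_\Omega$, which is how it arises from Helly's theorem in the application above.

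I expect no real obstacle. The only substantive point is the uniform tail estimate in step 2, which is where the hypothesis $\rho^n\ge0$ and the moment bound enter.
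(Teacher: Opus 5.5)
Your proof is correct and is essentially the paper's argument. Both use a diagonal extraction over the intervals $[-k,k]$, combined with the uniform tail bound $\int_{\{|x|>k\}}\rho^n\dd x\le C/k$ that follows from the first-moment bound and $\rho^n\ge0$. The only difference is cosmetic: you show the diagonal sequence is Cauchy in $L^1(\R)$ and invoke completeness, whereas the paper first identifies the local limit $\rho$, bounds $\int_\R|x|\rho\dd x$ via Fatou's lemma, and then estimates $\|\rho^n-\rho\|_{L^1(\R)}$ directly, so your route avoids Fatou altogether.
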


\begin{proof}
By choosing an increasing sequence of intervals $\Omega_k=[-k,k]\subset \R$, we construct a subsequence of $\rho^n$ which converges $\mathcal{L}^1$-almost everywhere and strongly in $L^1(\Omega_k)$ for all $k\in \mathbb{N}$ to some locally integrable function $\rho\geq 0$. By Fatou's lemma we infer that $\rho\in L^1(\R)$ and that
\[\int_\R|x|\rho(x) dx \leq\liminf_{n \rightarrow +\infty}\int_\R |x|\rho^n(x) dx<+\infty\,.\]
Then, for a given $k>0$, we denote by $\rho^n$ the above subsequence and compute
    \begin{align*}
        & \int_\R \left|\rho^n(x)-\rho(x)\right| dx = \int_{-k}^k\left|\rho^n(x)-\rho(x)\right|dx +\int_{\R\setminus[-k,k]} \left|\rho^n(x)-\rho(x)\right| dx\\
        & \ = \int_{-k}^k\left|\rho^n(x)-\rho(x)\right| dx + \int_{\R\setminus[-k,k]} \frac{1}{|x|}|x|\left|\rho^n(x)-\rho(x)\right|dx\\
        & \ \leq \int_{-k}^k\left|\rho^n(x)-\rho(x)\right|dx +\frac{1}{k}\int_\R |x|\left|(\rho^n(x)+\rho(x)\right)dx\,.
    \end{align*}
    Let $\varepsilon>0$, and let $R=C/\varepsilon$ where $C>0$ is an upper bound for
    \[\left\{\int_{\R}|x|\rho(x) dx\right\}\cup \left\{\int_\R |x|\rho^n(x)dx\,:\,\,n\in \mathbb{N}\right\}.\] We get
\begin{align*}
    & \limsup_{n\rightarrow+\infty} \int_\R \left|\rho^n(x)-\rho(x)\right| dx \leq \limsup_{n\rightarrow+\infty}\int_{-[R]-1}^{[R]+1}\left|\rho^n(x)-\rho(x)\right| dx +\varepsilon= \varepsilon
\end{align*}
and the statement follows from the arbitrariness of $\varepsilon$.
\end{proof}

  \section{Elementary inequalities}\label{sec:appendix_inequalities}
\begin{lemma}
    For any $a>0$ and $m>0$, there holds
    \begin{equation}\label{eq:very_elementary}
        (1+a)^{\frac{1}{m+1}}\leq 1+a^{\frac{1}{m+1}}
    \end{equation}
\end{lemma}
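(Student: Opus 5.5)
The inequality $(1+a)^{\frac{1}{m+1}}\leq 1+a^{\frac{1}{m+1}}$ is subadditivity of the concave power $s\mapsto s^p$ with exponent $p=\frac{1}{m+1}\in(0,1)$ (since $m>0$). I plan to prove it by raising both sides to the power $m+1$ and comparing with a binomial expansion, after a cleaner reformulation.

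Set $b=a^{p}>0$, so that $a=b^{m+1}$. Both sides are positive and $s\mapsto s^{m+1}$ is increasing on $[0,\infty)$. The claim is therefore equivalent to
\[
1+b^{m+1}\leq (1+b)^{m+1}.
\]
To prove this, I will write
\[
(1+b)^{m+1}=(1+b)^m\cdot 1+(1+b)^m\, b .
\]
Since $m>0$ and $1+b\geq 1$, the first term satisfies $(1+b)^m\geq 1$. Since $1+b\geq b$, the second term satisfies $(1+b)^m b\geq b^m\cdot b=b^{m+1}$. Adding the two bounds gives the desired inequality.

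An alternative route, which I keep as a check, is a calculus argument. Let $g(a)=1+a^{p}-(1+a)^{p}$. Then $g(0)=0$, and
\[
g'(a)=p\left(a^{p-1}-(1+a)^{p-1}\right)\geq 0,
\]
because $p-1<0$ makes $s\mapsto s^{p-1}$ decreasing. Hence $g\geq 0$ for all $a>0$.

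I expect no real obstacle here. The only point needing care is the monotonicity of $s^{m+1}$, which justifies the equivalent reformulation; this holds because all quantities involved are nonnegative.
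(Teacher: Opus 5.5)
Your proof is correct, and your primary argument differs from the paper's. The paper proves the lemma by exactly your ``check'' route. It sets $f(a)=1+a^{\frac{1}{m+1}}-(1+a)^{\frac{1}{m+1}}$ and notes $f(0)=0$ and $f'(a)=\frac{1}{m+1}a^{-\frac{m}{m+1}}\bigl(1-\bigl(\frac{a}{1+a}\bigr)^{\frac{m}{m+1}}\bigr)\ge 0$. This is your $g'$ with $a^{p-1}$ factored out.

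Your main argument is instead algebraic. You substitute $b=a^{\frac{1}{m+1}}$ and use monotonicity of $s\mapsto s^{m+1}$ on $[0,\infty)$ to reduce to $1+b^{m+1}\le(1+b)^{m+1}$. You then close with the splitting $(1+b)^{m+1}=(1+b)^m+(1+b)^m b$, using $(1+b)^m\ge 1$ and $(1+b)^m\ge b^m$.

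What your route buys is that it needs no differentiation. It also sidesteps the singularity of the derivative at $a=0$, where $a^{p-1}$ blows up. The calculus proof handles that point only implicitly, through continuity of $f$ at $0$ and the mean value theorem on $(0,a)$.

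What the paper's route buys is that it is a one-line sign check of a derivative, with no reformulation of the inequality. Both arguments cover the full range $m>0$ with equal ease, so the choice between them is a matter of taste.
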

\begin{proof}
 Given $f:[0,+\infty)\rightarrow \R$
 \[f(a)=1+a^{\frac{1}{m+1}}-(1+a)^{\frac{1}{m+1}}\,,\]
 the statement follows from $f(0)=0$ and
 \[f'(a)=\frac{1}{m+1}a^{-\frac{m}{m+1}}\left(1-\left(\frac{a}{1+a}\right)^{\frac{m}{m+1}}\right)\geq 0\,.\]
\end{proof}

  \begin{lemma}
    For any $p>0$, the following inequality holds for all $y,z\ge0$:
    \begin{align}
      \label{eq:ineq1}
      (z-y)(z^p-y^p) \ge \frac{4p}{(p+1)^2}\big(z^{(p+1)/2}-y^{(p+1)/2}\big)^2.
    \end{align}
  \end{lemma}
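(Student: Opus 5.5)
The plan is to reduce the inequality to a one-variable statement and then use a calculus or integral identity. Both sides are symmetric under swapping $y$ and $z$, and the inequality is trivial when $y=z$. So we may assume $0\le y<z$. Both sides are homogeneous of degree $p+1$ in $(y,z)$. If $y=0$, the claim reads $z^{p+1}\ge \frac{4p}{(p+1)^2}z^{p+1}$, which holds because $(p+1)^2-4p=(p-1)^2\ge0$.

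For $y>0$ we could divide by $z^{p+1}$ and study a function of $s=y/z\in(0,1)$. Instead, the cleaner route is to write each factor as an integral over $[y,z]$:
\[
z-y=\int_y^z 1\dd r,\qquad z^p-y^p=\int_y^z p\,r^{p-1}\dd r,\qquad z^{(p+1)/2}-y^{(p+1)/2}=\frac{p+1}{2}\int_y^z r^{(p-1)/2}\dd r .
\]
The Cauchy--Schwarz inequality on $[y,z]$ with the functions $1$ and $r^{(p-1)/2}$ gives
\[
\Big(\int_y^z r^{(p-1)/2}\dd r\Big)^2\le (z-y)\int_y^z r^{p-1}\dd r=\frac{(z-y)(z^p-y^p)}{p}.
\]
Multiplying by $\big(\frac{p+1}{2}\big)^2$ then yields
\[
\big(z^{(p+1)/2}-y^{(p+1)/2}\big)^2\le \frac{(p+1)^2}{4p}(z-y)(z^p-y^p),
\]
which is exactly \eqref{eq:ineq1}.

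The only delicate point is integrability at $r=0$ when $p<1$ and $y=0$. There $r^{p-1}$ is integrable since $p>0$, so the representation $z^p=\int_0^z p\,r^{p-1}\dd r$ still holds. Alternatively, one can handle $y=0$ directly as above, or pass to the limit $y\searrow0$ by continuity. I expect no real obstacle beyond checking this endpoint case.
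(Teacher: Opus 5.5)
Your proof is correct and follows essentially the paper's route: the paper also writes each difference as an integral over $[y,z]$ and concludes via Jensen's inequality, $\fint_y^z \zeta^{p-1}\,\mathrm{d}\zeta \ge \bigl(\fint_y^z \zeta^{(p-1)/2}\,\mathrm{d}\zeta\bigr)^2$, which is the same as your Cauchy--Schwarz step with the functions $1$ and $r^{(p-1)/2}$. Your remark that $r^{p-1}$ (and hence $r^{(p-1)/2}$) is integrable near $0$ when $p>0$ correctly handles the endpoint case $y=0$, which the paper passes over silently.
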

  \begin{proof}
    Without loss of generality, assume that $z>y\ge0$.
    Divide \eqref{eq:ineq1} by $p(z-y)^2$ to obtain the following equivalent form:
    \begin{align*}
      \frac{z^p-y^p}{p(z-y)} \ge \left(\frac{z^{(p+1)/2}-y^{(p+1)/2}}{\frac{p+1}2\,(z-y)}\right)^2.
    \end{align*}
    Now rewrite both sides as averaged integrals:
    \begin{align*}
      \fint_y^z \zeta^{p-1}\dd\zeta \ge \left(\fint_y^z\zeta^{(p-1)/2}\dd\zeta\right)^2.
    \end{align*}
    This is a direct consequence of Jensen's inequality.
  \end{proof}
  %
  % \begin{lemma}
  %   For any $p>1$ the following inequality holds for all $y,z\ge0$:
  %   \begin{align}
  %     \label{eq:summ}
  %     |z-y|^p \le |z^p-y^p|.
  %   \end{align}
  % \end{lemma}
  % %
  % \begin{proof}
  %   Without loss of generality, assume that $z>y\ge0$.
  %   Divide \eqref{eq:summ} by $z$ and rewrite it in terms of $u:=(z-y)/z$:
  %   \begin{align*}
  %     1 + u^p \le (1+u)^p.
  %   \end{align*}
  %   This is Bernoulli's inequality.
  % \end{proof}

\end{appendix}

%
%\bibliographystyle{plain}
%\bibliography{main}

\end{document}